\documentclass[12pt]{amsart}
\usepackage{epsfig,color}

\usepackage{mathrsfs}

\headheight=6.15pt \textheight=8.3in \textwidth=6.5in
\oddsidemargin=0in \evensidemargin=0in \topmargin=0in

\theoremstyle{definition}
\allowdisplaybreaks

\newtheorem{theorem}{Theorem}[section]

\newtheorem{proposition}[theorem]{Proposition}
\newtheorem{lemma}[theorem]{Lemma}
\newtheorem{remark}[theorem]{Remark}
\newtheorem{corollary}[theorem]{Corollary}

\numberwithin{equation}{section}
\newcommand\nn{\nonumber}
\usepackage{enumerate}

\DeclareMathOperator*{\im}{im}

\newcommand{\pr}{\partial}

\newcommand{\Lap}{\Delta}

\newcommand{\cL}{\mathcal{L}}
\newcommand{\cD}{\mathcal{D}}

\newcommand{\R}{\mathbb{R}}

\usepackage{comment}

\title{Rigidity and \L ojasiewicz inequalities for Clifford self-shrinkers}

\author{Ao Sun}
\address{Department of Mathematics,
University of Chicago,
5734 S. University Avenue,
Chicago, IL 60637, USA}
\email{aosun@uchicago.edu}

\author{Jonathan J. Zhu}
\address{Mathematical Sciences Institute, Australian National University, Hanna Neumann Building, Science Road, Canberra, ACT 2601, Australia and Department of Mathematics, Princeton University, Fine Hall, Washington Road, Princeton, NJ 08544, USA}
\email{jjzhu@math.princeton.edu}

\usepackage{lineno}

\begin{document}
	\begin{abstract}
		We show that the product of two round shrinking spheres is an isolated self-shrinker in any codimension, modulo rotations. Moreover we prove explicit \L ojasiewicz inequalities near such products. \L ojasiewicz inequalities were previously used by Schulze to prove uniqueness of tangent mean curvature flows at compact shrinkers; our results provide an explicit rate of convergence to products of two spheres. 
	\end{abstract}
	\date{\today}
	\maketitle

	\section{Introduction}
	A submanifold $\Sigma^n$ in the Euclidean space $\R^N$ is called a self-shrinker (or shrinker for short) if it satisfies the following equation
	\begin{equation}\label{eq:phi}
		\phi_\Sigma:=\frac{x^\perp}{2}-\mathbf{H}=0.
	\end{equation}
	Here $x$ is the position vector and $\mathbf{H}$ is the mean curvature vector, defined as the negative trace of the second fundamental form. The simplest examples of self-shrinkers $\Sigma^n\subset \R^N$ are round spheres $\mathbb{S}^n_{\sqrt{2n}}$ of radius $\sqrt{2n}$. Any Cartesian product of shrinkers is also a shrinker. 
	
	The goal of this paper is to show a local rigidity for the product of two shrinking spheres in any codimension. (When considered as hypersurfaces in a larger sphere, such products have also been referred to as Clifford minimal hypersurfaces.)
	
	\begin{theorem}\label{thm:main-rigidity-intro}
		Given $\alpha, k_1,k_2$, there exists $\epsilon>0$ such that if $\Sigma' \subset \mathbb{R}^N$ is a self-shrinker that is $(C^{2,\alpha},\epsilon)$-close to $\Sigma = \mathbb{S}^{k_1}_{\sqrt{2k_1}} \times \mathbb{S}^{k_2}_{\sqrt{2k_2}}$, then $\Sigma'$ is a rotation of $\Sigma$.
	\end{theorem}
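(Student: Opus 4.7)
The plan is to apply the implicit function theorem to the self-shrinker equation near $\Sigma$, after quotienting by the action of the ambient rotation group $SO(N)$. The main technical input will be an explicit spectral computation for the linearised shrinker operator at $\Sigma$, identifying its kernel exactly with infinitesimal ambient rotations.

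First I would parameterise nearby submanifolds $\Sigma' \subset \R^N$ as normal graphs
\begin{equation*}
\Sigma_V := \{\,x + V(x) : x \in \Sigma\,\},
\end{equation*}
for $V$ a section of the normal bundle $N\Sigma$ with small $C^{2,\alpha}$ norm. Since the expression $\phi_\Sigma$ in \eqref{eq:phi} depends analytically on the embedding, the shrinker condition $\phi_{\Sigma_V} = 0$ becomes a quasilinear elliptic system $\Phi(V) = 0$, where $\Phi: C^{2,\alpha}(N\Sigma) \to C^{0,\alpha}(N\Sigma)$ is analytic with $\Phi(0) = 0$. The linearisation $L := D\Phi(0)$ is the Jacobi operator of the Gaussian area ($F$-)functional at $\Sigma$.

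Next, I would compute $\ker L$ explicitly. Using the product structure $\Sigma = \mathbb{S}^{k_1}_{\sqrt{2k_1}} \times \mathbb{S}^{k_2}_{\sqrt{2k_2}}$ and an adapted frame, the normal bundle of $\Sigma$ in $\R^N$ splits into two intrinsic radial directions (the outward normals of the sphere factors inside the respective $\R^{k_i+1}$) plus a trivial $(N - k_1 - k_2 - 2)$-dimensional extra sub-bundle. On the radial piece the computation reduces to the classical spectral theory of the drift Laplacian on each round shrinking sphere (Colding--Minicozzi); on the extra normal directions $L$ reduces to a Gaussian-weighted scalar Laplacian with explicit lower-order terms coming from the second fundamental form. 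Separation of variables then yields all eigendata of $L$. I would identify $\ker L$ with the normal projections of infinitesimal ambient rotations $x \mapsto (Ax)^\perp$, $A^T = -A$: the inclusion $\supset$ is automatic since $SO(N)$ preserves the shrinker equation, and the reverse inclusion follows by checking that every explicit zero-eigenfunction produced by the spectral decomposition is realised by some such $A$.

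Once this kernel identification is in place, the theorem follows by a standard slice argument. Choose a closed subspace $\mathcal{S} \subset C^{2,\alpha}(N\Sigma)$ complementary to $\ker L$ --- equivalently, transverse to the $SO(N)$-orbit through $\Sigma$. On $\mathcal{S}$ the restriction $L|_\mathcal{S}$ is an isomorphism onto $\im L$, so by the implicit function theorem $V = 0$ is the unique small solution of $\Phi(V) = 0$ in $\mathcal{S}$. Any $C^{2,\alpha}$-close shrinker $\Sigma'$ can first be rotated into this slice and must therefore coincide with $\Sigma$, i.e.\ the original $\Sigma'$ is a rotation of $\Sigma$. I expect the hard part to be the kernel computation: bookkeeping all zero eigenfunctions of $L$ in arbitrary codimension --- in particular ruling out unexpected zero modes among the extra normal directions and the mixed separation-of-variables eigenfunctions --- and matching each one bijectively with an ambient rotation.
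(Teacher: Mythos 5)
Your proposal breaks down at the crucial step: the identification of $\ker L$ with infinitesimal rotations is \emph{false} for $\Sigma = \mathbb{S}^{k_1}_{\sqrt{2k_1}} \times \mathbb{S}^{k_2}_{\sqrt{2k_2}}$. This is precisely the ``unexpected zero mode'' you flag as the risk, and it does occur. Writing $\theta^1_i, \theta^2_j$ for the restrictions of the ambient coordinate functions on the two spherical factors, and $N_1, N_2$ for the outward unit normals of the factors, the linearised operator decomposes as $L = (\Delta + 1)$ on the $\{N_1, N_2\}$ directions, and the products $\theta^1_i\theta^2_j$ are exactly eigenfunctions of $\Delta$ with eigenvalue $1$. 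Hence every vector field $\theta^1_i\theta^2_j(a N_1 + b N_2)$ lies in $\ker L$. The rotations in the $x^1_i x^2_j$ plane account only for the one-dimensional line $a N_1 + b N_2 \propto \tfrac{1}{\sqrt{2k_2}}N_1 - \tfrac{1}{\sqrt{2k_1}}N_2$; the orthogonal direction $\sqrt{2k_1}N_1 + \sqrt{2k_2}N_2$ (proportional to $\mathbf{H}$) produces genuine non-rotational Jacobi fields. So $\mathcal{K}_1 := \ker L \cap \mathcal{K}_0^\perp$ has dimension $(k_1+1)(k_2+1)$, not zero.

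Because of this, the implicit function theorem on a slice does not close the argument: restricted to a complement $\mathcal{S}$ of $\mathcal{K}_0$, the operator $L|_\mathcal{S}$ still has a nontrivial kernel $\mathcal{K}_1$, so it is not an isomorphism and the IFT gives no uniqueness. The entire content of the paper lies in dealing with these non-rotational Jacobi fields $U\in\mathcal{K}_1$: one shows they are \emph{obstructed at third order} by verifying $\pi_\mathcal{K}\,\mathcal{D}^2\varphi(U,U)=0$, solving $LW=-\mathcal{D}^2\varphi(U,U)$, and then proving a definite lower bound on $\pi_{\mathcal{K}_1}\bigl(\mathcal{D}^3\varphi(U,U,U) + 3\mathcal{D}^2\varphi(U,W)\bigr)$. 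Together with quantitative Taylor-expansion estimates in H\"older and Sobolev norms, this yields the cubic \L ojasiewicz inequality and hence rigidity. If $\mathcal{K}_1$ were zero the problem would be as easy as your proposal suggests (and indeed this is the situation for a single round shrinking sphere, or for the Clifford hypersurface viewed as a minimal hypersurface in $\mathbb{S}^{n+1}_{\sqrt{2n}}$, as the paper notes), but for the shrinker problem the kernel is genuinely bigger, and a purely first-order argument cannot succeed.
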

	
	The special case of the Clifford torus $k_1=k_2=1$, $N=4$, was recently proven by Evans, Lotay and Schulze \cite{ELS18}. Theorem \ref{thm:main-rigidity-intro} is a direct consequence of the following quantitative rigidity theorem:
	
	\begin{theorem}
		\label{thm:quant-rigidity-intro}
		Given $\alpha, k_1,k_2$, there exist $C,\epsilon>0$ such that if $\Sigma' \subset \mathbb{R}^N$ is any submanifold $(C^{2,\alpha},\epsilon)$-close to $\Sigma = \mathbb{S}^{k_1}_{\sqrt{2k_1}} \times \mathbb{S}^{k_2}_{\sqrt{2k_2}}$, then there is a rotation $\mathcal{R} \in \mathrm{SO}(N)$ such that $\mathcal{R}\cdot\Sigma'$ may be written as the graph of a normal vector field $V$ on $\Sigma$, where $V$ is orthogonal to rotations and satisfies
		\begin{equation}
			\|V\|_{C^{2,\alpha}}^3
			\leq C \|\phi_{\Sigma'}\|_{C^{0,\alpha}},
		\end{equation}
		\begin{equation}
			\|V\|_{L^2}^3
			\leq C \|\phi_{\Sigma'}\|_{L^2}.
		\end{equation}
	\end{theorem}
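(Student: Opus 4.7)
The strategy is a \L ojasiewicz-type argument: represent $\Sigma'$ as a normal graph of some vector field $V$ over $\Sigma$, expand the shrinker operator $\phi$ in powers of $V$, and exploit the kernel structure of the Jacobi operator of $\Sigma$. First, for $\epsilon$ sufficiently small the tubular neighborhood of $\Sigma$ lets us write $\Sigma' = \exp_\Sigma^\perp W$ for some normal section $W$ of $\Sigma$ with $\|W\|_{C^{2,\alpha}} \leq C\epsilon$. Then apply a rotation $\mathcal{R} \in \mathrm{SO}(N)$, obtained by an implicit function argument minimising the $L^2$-distance from $\Sigma'$ to the $\mathrm{SO}(N)$-orbit of $\Sigma$, so that the normal graph field $V$ of $\mathcal{R}\cdot\Sigma'$ over $\Sigma$ is orthogonal to all rotational Jacobi fields. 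This quotients out the rotational gauge and preserves $\|V\|_{C^{2,\alpha}} \leq C\epsilon$.

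Expanding the shrinker equation in $V$ gives $\phi_{\mathcal{R}\cdot\Sigma'} = LV + Q(V)$, where $L$ is the linearisation of $\phi$ at $\Sigma$ (a self-adjoint elliptic operator on normal sections in the Gaussian $L^2$ metric) and $Q(V) = O(|V|^2 + |V||\nabla^2 V| + |\nabla V|^2)$. Let $\mathcal{K}$ denote the space of $L$-Jacobi fields modulo rotations; by construction $V$ is orthogonal to rotations. Decompose $V = V_\mathcal{K} + V_\perp$ with $V_\mathcal{K} \in \mathcal{K}$ and $V_\perp \perp \ker L$. The projection of $\phi_{\mathcal{R}\cdot\Sigma'}$ to $(\ker L)^\perp$ together with the elliptic invertibility of $L$ on that subspace, and Schauder/$L^2$ estimates, yields
\begin{equation*}
\|V_\perp\|_{C^{2,\alpha}} \leq C\left(\|\phi_{\Sigma'}\|_{C^{0,\alpha}} + \|V_\mathcal{K}\|_{C^{2,\alpha}}^2\right),
\end{equation*}
and an analogous bound in $L^2$.

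The $\mathcal{K}$-component drives the cubic behaviour. Projecting $\phi_{\mathcal{R}\cdot\Sigma'}$ onto $\mathcal{K}$ produces a finite-dimensional analytic map $f:\mathcal{K}\to\mathcal{K}$ with $f(0)=0$ and $df(0)=0$, encoding the Lyapunov--Schmidt reduced shrinker equation after substituting the estimate for $V_\perp$ in terms of $V_\mathcal{K}$. The crucial structural claim is \emph{cubic non-degeneracy}: the third-order Taylor coefficient of $f$, i.e.\ the third variation of the F-functional restricted to $\mathcal{K}$, is a non-degenerate symmetric $3$-form on $\mathcal{K}$. Together with the $V_\perp$ estimate this gives
\begin{equation*}
\|V_\mathcal{K}\|^3 \leq C\|\Pi_\mathcal{K}\phi_{\mathcal{R}\cdot\Sigma'}\| + C\|V_\mathcal{K}\|\cdot\|V_\perp\|^2 + \ldots,
\end{equation*}
and absorbing the small cross-terms followed by an elliptic bootstrap between $L^2$ and $C^{2,\alpha}$ produces both stated inequalities.

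The main obstacle is the cubic non-degeneracy. One must first identify $\mathcal{K}$ explicitly for the product $\mathbb{S}^{k_1}_{\sqrt{2k_1}}\times\mathbb{S}^{k_2}_{\sqrt{2k_2}}$ by separation of variables on the two factors and a spectral analysis of $L$ in terms of spherical harmonics and the two normal directions induced by the Clifford embedding; then one must compute the third variation of the shrinker (F-)functional along these non-trivial Jacobi modes and verify it is a non-degenerate trilinear form on $\mathcal{K}$. This explicit geometric computation, which makes essential use of the product structure and the two specific radii, is what forces the exponent $3$ in the \L ojasiewicz inequality and controls the dependence of $C$ on $(k_1,k_2,\alpha)$.
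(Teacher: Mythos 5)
Your overall strategy — slice theorem to gauge out rotations, decompose $V$ into the Jacobi part $V_\mathcal{K}$ and its complement $V_\perp$, estimate $V_\perp$ via elliptic invertibility of $L$, then obtain a cubic obstruction from the Lyapunov--Schmidt reduced map — is the right one and matches the paper's structure. However, there is a genuine gap in your treatment of the "cubic non-degeneracy," and this is precisely where the hard work of the proof lives.

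You assert that "the third-order Taylor coefficient of $f$, i.e.\ the third variation of the F-functional restricted to $\mathcal{K}$, is a non-degenerate symmetric $3$-form." This formulation skips two essential, non-trivial facts. First, for the reduced map $f$ to \emph{have} vanishing second-order Taylor coefficient (so that the obstruction is cubic rather than quadratic), one must prove that $\pi_{\mathcal{K}}\,\mathcal{D}^2\mathcal{\varphi}(U,U)=0$ for every $U$ in the non-rotational Jacobi kernel $\mathcal{K}_1$. This is the content of Proposition \ref{prop:d2PhiK1} in the paper and requires a concrete parity argument on the spherical integrals; it cannot be taken for granted. Second, the cubic coefficient of the reduced map is \emph{not} $\mathcal{D}^3\mathcal{\varphi}(U,U,U)$ alone. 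The Lyapunov--Schmidt substitution $V_\perp\approx \tfrac12 W$, where $W$ solves $LW=-\mathcal{D}^2\mathcal{\varphi}(U,U)$, feeds back into the expansion and produces the combination
\[
\mathcal{D}^3\mathcal{\varphi}(U,U,U)+3\,\mathcal{D}^2\mathcal{\varphi}(U,W),
\]
and it is the projection of \emph{this} combined quantity to $\mathcal{K}_1$ that one must bound below by $\delta\|U\|^3$ (Proposition \ref{prop:thirdvar}). Computing $W$ explicitly — after reducing $U$ to the form $u=\sum a_ix_iy_i$ by singular value decomposition to keep the linear system tractable — and then establishing positivity of the resulting rational functions $Q_0,Q_4$ is the core computational content of the proof; a computation of $\mathcal{D}^3\mathcal{\varphi}(U,U,U)$ alone, without the cross term, would give the wrong answer.

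There is also a secondary issue: the phrase "elliptic bootstrap between $L^2$ and $C^{2,\alpha}$ produces both stated inequalities" underestimates the difficulty. Taylor expansion to third order introduces products of up to four factors, so the $L^2$ estimate cannot be closed by working in $L^2$ alone; the paper first establishes the $C^{2,\alpha}$ estimate and then uses it to control $W^{2,p}$ norms with $p$ increasing by order (and drops to $L^1$ at the top order to stay within what has already been estimated). This staged Hölder-then-Sobolev argument, not a generic bootstrap, is what closes the Sobolev inequality.
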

	
	Here `rotations' refers to the normal vector fields induced by ambient rotation. Theorem \ref{thm:quant-rigidity-intro} represents a \L ojasiewicz inequality of the first kind (`distance \L ojasiewicz'; c.f. \cite{CM15} and \cite{CM19}) with explicit exponent for the Gaussian area functional 
	
	\begin{equation}\label{eq:F is Gaussian}
		\mathcal{F}(\Sigma)= (4\pi)^{-\frac{n}{2}}\int_{\Sigma}e^{-\frac{|x|^2}{4}}.
	\end{equation}
	The shrinker quantity $\phi$ is the $L^2$-gradient of $\mathcal{F}$, so self-shrinkers are critical points of $\mathcal{F}$ and Theorem \ref{thm:quant-rigidity-intro} bounds the distance to $\Sigma$ by the explicit power $\frac{1}{3}$ of the gradient. Note that Schulze \cite{S14} showed that the \L ojasiewicz-Simon inequality proven by L. Simon \cite{S83} is applicable for closed shrinkers, which yields the following estimate (see also \cite{CM18}):
	
	\begin{theorem}[\cite{S14}]
		\label{thm:schulze}
		Let $\Sigma$ be a closed shrinker, then there exists $\epsilon>0$ and $\beta\in (\frac{1}{2},1)$ so that if $\Sigma'$ is any submanifold $(C^{2,\alpha},\epsilon)$-close to $\Sigma$, then \begin{equation}|\mathcal{F}(\Sigma')- \mathcal{F}(\Sigma)|^\beta \leq \|\phi_{\Sigma'} \|_{L^2 }. \end{equation}
	\end{theorem}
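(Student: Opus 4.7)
The plan is to verify the hypotheses of L.~Simon's abstract \L ojasiewicz-Simon inequality \cite{S83}, applied to the Gaussian area functional $\mathcal{F}$ viewed as a real-analytic functional on a neighborhood of $0$ in the Banach space of $C^{2,\alpha}$ normal vector fields on the compact shrinker $\Sigma$.

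First I would use the tubular neighborhood theorem to parametrize any $(C^{2,\alpha},\epsilon)$-close submanifold $\Sigma'$ as the normal graph $\Sigma_u=\{x+u(x):x\in\Sigma\}$ of a $C^{2,\alpha}$ normal vector field $u$ of small norm, and define $\mathcal{G}(u):=\mathcal{F}(\Sigma_u)$. Because $\Sigma$ is compact, the integrand is real-analytic in $u$ and $\nabla u$ (both the area element and the Gaussian weight are real-analytic in these arguments), so $\mathcal{G}$ is real-analytic on an open neighborhood of $0$ in $C^{2,\alpha}(N\Sigma)$. Its $L^2$-gradient $D\mathcal{G}(u)$ equals, up to a Jacobian factor uniformly bounded above and below for small $u$, the pullback of $\phi_{\Sigma'}$ to $\Sigma$; correspondingly the $L^2$ norms are comparable.

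The Hessian $D^2\mathcal{G}(0)$ is the Gaussian stability (Jacobi) operator $L$ on normal vector fields on $\Sigma$, a self-adjoint second-order linear elliptic operator. Since $\Sigma$ is closed, $L:C^{2,\alpha}(N\Sigma)\to C^{0,\alpha}(N\Sigma)$ is Fredholm of index zero with finite-dimensional kernel. The gradient map $D\mathcal{G}:C^{2,\alpha}(N\Sigma)\to C^{0,\alpha}(N\Sigma)$ is itself a real-analytic quasilinear elliptic map whose linearization at $0$ is $L$.

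With these ingredients assembled, Simon's theorem (or its modern reformulations by Chill and coauthors) yields $\beta\in(\tfrac12,1)$ and $\epsilon>0$ such that $|\mathcal{G}(u)-\mathcal{G}(0)|^\beta\le\|D\mathcal{G}(u)\|_{L^2}$ whenever $\|u\|_{C^{2,\alpha}}<\epsilon$. Translating back through the graph parametrization gives the claimed inequality. The main obstacle is the careful verification of the analyticity and Fredholm-gradient hypotheses of the abstract theorem, in particular showing that the intrinsic shrinker field $\phi_{\Sigma'}$ on $\Sigma'$ corresponds, under the graph diffeomorphism, to the Fr\'echet derivative of $\mathcal{G}$ with $L^2$ norms comparable up to constants depending only on $\Sigma$. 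Compactness of $\Sigma$ is essential and renders this tractable; the strategy would break down for noncompact shrinkers, where the stability operator loses Fredholm properties at infinity and uniform comparability of norms fails.
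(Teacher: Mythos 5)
This theorem is not proved in the paper---it is quoted from Schulze \cite{S14}, and the paper's own text already describes the proof method as verifying that L.~Simon's \L ojasiewicz--Simon machinery \cite{S83} applies to the Gaussian area functional on a closed shrinker. Your proposal reproduces exactly that strategy: parametrize nearby submanifolds as normal graphs, note that the integrand (volume element times Gaussian weight) depends real-analytically on $(u,\nabla u)$ and so $\mathcal{F}$ is analytic on a $C^{2,\alpha}$ neighborhood of $0$, identify the $L^2$-gradient with $\phi$ up to a uniformly controlled Jacobian factor, observe that the linearization $L$ is a self-adjoint second-order elliptic Fredholm operator on the compact $\Sigma$, and invoke Simon's abstract theorem. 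You also correctly flag the two places where care is needed---comparability of $\|D\mathcal{G}(u)\|_{L^2}$ with $\|\phi_{\Sigma'}\|_{L^2}$ under the graph diffeomorphism, and the essential role of compactness for the Fredholm and uniformity hypotheses---which are precisely the technical points Schulze addresses. In short, your approach matches the cited source; a fully rigorous write-up would need to spell out the Lyapunov--Schmidt reduction step of Simon's theorem and the precise normalization of the gradient (Gaussian-weighted versus unweighted $L^2$), but the outline is sound and there is no gap in the strategy.
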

	This is a \L ojasiewicz inequality of the second kind (`gradient \L ojasiewicz'). As explained in \cite[Section 0.4]{CM15}, one expects the first kind to imply the second. Indeed, from Theorem \ref{thm:quant-rigidity-intro} we are able to prove a gradient \L ojasiewicz inequality with explicit exponent:
	
	\begin{theorem}
		\label{thm:gradient-L}
		Let $\Sigma = \mathbb{S}^{k_1}_{\sqrt{2k_1}} \times \mathbb{S}^{k_2}_{\sqrt{2k_2}}$. Then Theorem \ref{thm:schulze} holds with $\beta= \frac{3}{4}$. 
	\end{theorem}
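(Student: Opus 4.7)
The plan is to deduce Theorem \ref{thm:gradient-L} from the distance \L ojasiewicz inequality of Theorem \ref{thm:quant-rigidity-intro} together with an elementary ``cross'' estimate. The expected exponent $\beta=3/4$ is already visible in the one-dimensional toy model $f(x)=x^4/4$: here $\phi=f'(x)=x^3$ and $V=x$, so $|V|^3\leq |\phi|$ (distance exponent $1/3$) and $|f|^{3/4}=|x|^3/4^{3/4}\leq C|\phi|$ (gradient exponent $3/4$), linked by the identity $|f|=|\phi||V|/4$.

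Concretely, first apply Theorem \ref{thm:quant-rigidity-intro} to rotate $\Sigma'$ and write it as the normal graph of a vector field $V$ on $\Sigma$, orthogonal to rotations, with $\|V\|_{L^2}^3\leq C\|\phi_{\Sigma'}\|_{L^2}$. The core step is then the cross estimate
\begin{equation}\label{eq:cross-plan}
|\mathcal{F}(\Sigma')-\mathcal{F}(\Sigma)|\leq C\|\phi_{\Sigma'}\|_{L^2}\|V\|_{L^2}.
\end{equation}
Granted \eqref{eq:cross-plan}, Theorem \ref{thm:gradient-L} follows at once:
\[
|\mathcal{F}(\Sigma')-\mathcal{F}(\Sigma)|^{3/4}\leq C(\|\phi_{\Sigma'}\|_{L^2}\|V\|_{L^2})^{3/4}\leq C\|\phi_{\Sigma'}\|_{L^2}^{3/4}\cdot\|\phi_{\Sigma'}\|_{L^2}^{1/4}=C\|\phi_{\Sigma'}\|_{L^2}.
\]

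To establish \eqref{eq:cross-plan}, I would use the fundamental theorem of calculus along the linear interpolation $\Sigma_t$ $=$ graph of $tV$ over $\Sigma$. Since $\phi$ is the $L^2$-gradient of $\mathcal{F}$ with respect to the Gaussian weighted inner product,
\[
\mathcal{F}(\Sigma')-\mathcal{F}(\Sigma)=\int_0^1\int_{\Sigma_t}\phi_{\Sigma_t}\cdot V\,e^{-|x|^2/4}\,d\mu_t\,dt,
\]
and Cauchy-Schwarz reduces matters to showing $\int_0^1\|\phi_{\Sigma_t}\|_{L^2(\Sigma_t)}\,dt\leq C\|\phi_{\Sigma'}\|_{L^2}$. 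Expanding $\phi_{\Sigma_t}$ as a polynomial in $t$ with $\phi_{\Sigma_0}=0$ gives $\phi_{\Sigma_t}=t\phi_{\Sigma'}-t(1-t)\cdot N(V)+\cdots$, where $N$ collects the quadratic and higher parts of the shrinker nonlinearity at $\Sigma$.

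The main obstacle I anticipate is bounding the nonlinear correction $N(V)$ (and its higher-order siblings) sharply enough that they do not destroy the leading factor of $t$. For this I would combine the a priori smallness $\|V\|_{C^{2,\alpha}}\leq\epsilon$ with the $L^2$-smallness $\|V\|_{L^2}\leq C\|\phi_{\Sigma'}\|_{L^2}^{1/3}$ coming from Theorem \ref{thm:quant-rigidity-intro} via Sobolev--H\"older interpolation, aiming to bound $\|N(V)\|_{L^2}\|V\|_{L^2}$ by $C\|\phi_{\Sigma'}\|_{L^2}\|V\|_{L^2}$. A potentially cleaner alternative is to bypass FTC and work directly from the Taylor expansion $\mathcal{F}(\Sigma')-\mathcal{F}(\Sigma)=\tfrac12\langle L_\Sigma V,V\rangle+O(\|V\|_{C^{2,\alpha}}\|V\|_{L^2}^2)$, substituting $L_\Sigma V=\phi_{\Sigma'}-Q(V)$ with $Q$ quadratic in $V$, and controlling the cubic remainder and $Q(V)$ by the same interpolation.
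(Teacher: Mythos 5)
Your target estimate, the cross inequality $|\mathcal{F}(\Sigma')-\mathcal{F}(\Sigma)|\leq C\|\phi_{\Sigma'}\|_{L^2}\|V\|_{L^2}$, is indeed equivalent (given the distance \L ojasiewicz) to the statement one needs, and your reduction from it to $\beta=3/4$ is correct. The gap is in how you propose to prove the cross estimate: neither the FTC-plus-Cauchy--Schwarz route nor the direct Taylor route closes as written, because both throw away structural cancellations that the $4/3$ exponent absolutely requires.

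Concretely, decompose $V=U+h$ with $U\in\mathcal{K}$, $h\in\mathcal{K}^{\perp}$. The scaling here is $\|U\|\sim\|V\|\sim\|\phi\|^{1/3}$ but $\|h\|,\|LV\|\sim\|\phi\|^{2/3}$. Along the path $\Sigma_t$ one has $\phi_{\Sigma_t}=tLV+\tfrac{t^2}{2}\mathcal{D}^2\varphi(V,V)+O(t^3\|V\|^3)$, so $\|\phi_{\Sigma_t}\|_{L^2}\sim\|\phi\|^{2/3}\gg\|\phi\|$; hence $\int_0^1\|\phi_{\Sigma_t}\|_{L^2}\,dt$ is \emph{not} bounded by $C\|\phi_{\Sigma'}\|_{L^2}$ and Cauchy--Schwarz loses too much. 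The same issue appears in your second route: the cubic term $\langle V,\mathcal{D}^2\varphi(V,V)\rangle$ is naively of size $\|V\|^3\sim\|\phi\|$, which would only give $\beta=1$, not $\beta=3/4$; interpolation between $C^{2,\alpha}$ and $L^2$ smallness cannot improve this power, since the bad part $\langle U,\mathcal{D}^2\varphi(U,U)\rangle$ has no $h$-factor to exploit. What rescues the argument (and what the paper does) are two exact cancellations kept inside the inner products: first, since $LU=0$ and $L$ is self-adjoint, $\langle V,LV\rangle=\langle h,Lh\rangle\lesssim\|\phi\|^{4/3}$; second, by Proposition \ref{prop:d2PhiK1}, $\langle U,\mathcal{D}^2\varphi(U,U)\rangle_{L^2}=0$, so after expanding $V=U+h$ every surviving term in $\langle V,\mathcal{D}^2\varphi(V,V)\rangle$ carries at least one factor of $h$ and hence is $\lesssim\|U\|^2\|h\|\sim\|\phi\|^{4/3}$. (Note your toy model $f(x)=x^4/4$ already encodes exactly these two vanishings, $f''(0)=f'''(0)=0$; they must be transferred explicitly.) Once these cancellations are in hand, the paper's argument proceeds by writing $\mathcal{F}(\Sigma_V)-\mathcal{F}(\Sigma)=\mathcal{E}_3-\tfrac12\langle V,LV\rangle+\tfrac16\langle V,\mathcal{D}^2\varphi(V,V)\rangle$ and bounding each piece by $\|\phi\|_{L^2}^{4/3}$, which is the content you would need to supply to complete your proposal.
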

	
	Schulze \cite{S14} showed that such an inequality implies uniqueness of tangent flows, and from his work we immediately deduce an explicit convergence rate of the mean curvature flow near a singularity modelled by $\Sigma=\mathbb{S}^{k_1}_{\sqrt{2k_1}} \times \mathbb{S}^{k_2}_{\sqrt{2k_2}}$. Specifically, if the rescaled mean curvature flow $\widetilde{M_\tau}$ converges to $\Sigma$ as $\tau\to \infty$, then when $\tau$ is sufficiently large, $\widetilde{M_\tau}$ can be written as the graph of a vector field $V(\tau)$ over $\Sigma$, with 
	\begin{equation}\|V(\tau)\|_{L^2}\leq C\tau^{-1/2}.\end{equation} 

	\subsection{Background}
	Self-shrinkers $\Sigma^n \subset \mathbb{R}^N$ can also be viewed as minimal submanifolds in the Gaussian space $(\R^N,e^{-\frac{|x|^2}{4n}}\delta)$. Thus, we can also interpret our main theorem as follows: products of two spheres are isolated critical points of the Gaussian functional $\mathcal F$ in the space of submanifolds. 
	
	The local rigidity of self-shrinkers is strongly related to the singular behaviour of mean curvature flows, and has thus been a subject of significant interest. In \cite{CIM15}, Colding-Ilmanen-Minicozzi proved that round cylinders in codimension 1 are locally rigid (up to rotations). Later in \cite{GZ18}, Guang-Zhu removed a technical assumption. 
	
	Higher codimension creates new technical difficulties, and also admits self-shrinkers which are geometrically more complicated; see the discussions in \cite{ALW14}, \cite{AS13}, \cite{LL15}. In \cite{ELS18}, Evans-Lotay-Schulze proved the local rigidity of Clifford torus as a self-shrinker in $\R^4$. More recently, in \cite{CM19} Colding-Minicozzi proved that round cylinders are also rigid in any codimension. 
	
	\L ojasiewicz inequalities have become a popular method for studying uniqueness problems related to geometric flows. There have been many applications following the method of Simon \cite{S83}, which uses Lyapunov-Schmidt reduction to classical inequalities of \L ojasiewicz \cite{Loj}. For mean curvature flow these include uniqueness of blowups at compact shrinkers \cite{S14} and at asymptotically conical shrinkers \cite{CS19}. For round cylinders, however, Colding-Minicozzi \cite{CM15, CM19} prove \L ojasiewicz inequalities directly, which also imply rigidity. In this paper, we also take the direct approach using a perturbative analysis. The second author used the same strategy to show uniqueness and rigidity at a more general class of cylinders \cite{Z20}. 
	
	We remark that our analysis is inherently local, that is, we do not give an explicit neighbourhood in which the Clifford shrinkers are isolated. On the other hand, there are many results on the rigidity of self-shrinkers under explicit curvature or other conditions, see \cite{S05}, \cite{CL13}, \cite{DX14}, \cite{D18}. We also mention that there are various contexts in which the rigidity of Clifford hypersurfaces is also an attractive topic: \cite{B13} for embedded minimal tori in $S^3$ and \cite{L02} for Willmore surfaces in a sphere. Most famously the rigidity of Clifford hypersurfaces as minimal hypersurfaces in a sphere is known as Chern's conjecture; see \cite{L69, S68, CCK} or \cite{XX17} and references therein, which includes connections to the shrinker setting. 
	
	\subsection{Expansion strategy}
	Our proof strategy is a variant of the method by Evans-Lotay-Schulze \cite{ELS18}, which was also used by the second author in \cite{Z20}. We do so directly in the setting of an elliptic integrand $\mathcal{F}$ defined on a Banach space $\mathcal{E}$. Suppose $0\in \mathcal{E}$ is a critical point of $F$, and $\mathcal{G}:\mathcal{E}\to\mathcal{E}'$ is the Euler-Lagrange operator of $\mathcal{F}$. In the shrinker setting, $\mathcal{E}$ will be the space of normal variations orthogonal to those generated by rotations, $\mathcal{F}$ the Gaussian area as above and $\mathcal{G}$ the shrinker mean curvature $\phi$. 
	
		The strategy, as explained in \cite[Section 1.3]{Z20}, is to attempt to invert the series expansion of $\mathcal{G}$. Here we give a more `hands on' explanation, with the goal of highlighting some subtleties in our infinite-dimensional geometric setting and some differences with \cite{ELS18}. 
	
	Denote by $V$ a vector in $E$. We have the formal Taylor series expansion as follows
	\begin{equation}\label{eq:taylorG}\mathcal{G}(V)=\cD\mathcal{G}(V)+\frac{1}{2}\cD^2\mathcal{G}(V,V)+\frac{1}{6}\cD^3\mathcal{G}(V,V,V)+\cdots\end{equation}
	Here $\cD\mathcal{G}(V)=LV$, where $L$ is the linearised operator (at $0$). Its kernel $\mathcal{K}=\ker L$ forms the space of Jacobi fields, which correspond to infinitesimal deformations through critical points of $\mathcal{F}$. We are interested in whether such deformations are obstructed (non-integrable). 
	
	In general $\mathcal{D}^k\mathcal{G}$ is a symmetric $k$-linear map, and one might expect that $\mathcal{G}$ has an $m$th order obstruction so long as $\mathcal{D}^k\mathcal{G} =0$ for $k<m$, and $\|\mathcal{D}^m\mathcal{G} (V)\| \geq \delta \|V\|^m$. This is essentially true if $\mathcal{K}$ is not a proper subspace, but otherwise there will be more complicated conditions for an $m$th order obstruction, which we find inductively as follows.
	
	If $\mathcal{K}=0$, ellipticity shows that $L$ is invertible and hence $\mathcal{G}$ is obstructed at order 1 on all of $\mathcal{E}$. For example, a shrinking sphere $\mathbb{S}^k_{\sqrt{2k}}$ has no Jacobi fields modulo rotation, which gives an easy proof of rigidity. This is also the case for the product $ \mathbb{S}^{k_1}_{\sqrt{2k_1}} \times \mathbb{S}^{k_2}_{\sqrt{2k_2}}$ considered as a minimal hypersurface in $\mathbb{S}^{n+1}_{\sqrt{2n}}$, $n=k_1+k_2$, that is, using the area functional for $\mathcal{F}$. 
	
	If instead we consider $ \mathbb{S}^{k_1}_{\sqrt{2k_1}} \times \mathbb{S}^{k_2}_{\sqrt{2k_2}}$ as a self-shrinker, there do exist Jacobi fields other than rotations. Thus $\mathcal{K}\neq 0$ and we must look for higher order obstructions. To do so, we consider a complement $\mathcal{K}^\perp$ of $\mathcal{K}$ in $\mathcal{E}$ and decompose $V= U+h$, where $U\in\mathcal{K}$, $h\in\mathcal{K}^\perp$. 
	
	
	With respect to this decomposition, the Taylor expansion becomes
	\begin{equation}\label{eq:introtaylor}
		\begin{split}
			\mathcal{G}(V)&=\cD\mathcal{G}(h)+\frac{1}{2}\cD^2\mathcal{G}(U+h,U+h)+\frac{1}{6}\cD^3\mathcal{G}(U+h,U+h,U+h)+\cdots
			\\
			&= \cD\mathcal{G}(h)+\frac{1}{2}\cD^2\mathcal{G}(U,U)+\cD^2\mathcal{G}(U,h)+\frac{1}{2}\cD^2\mathcal{G}(h,h)+\cdots
		\end{split}
	\end{equation}
	Invertibility of $L$ on $\mathcal{K}^\perp$ shows that $U$ is first order and $h$ is (at least) second order in $\|V\|$, as expected, which justifies the grouping in the second line. Now $\mathcal{G}$ has an order 1 obstruction on $\mathcal{K}^\perp$, but even if $\mathcal{G}$ has an order 2 obstruction on $\mathcal{K}$, these do not automatically combine to give an order 2 obstruction on $\mathcal{E}$ since the $\frac{1}{2}\cD^2 \mathcal{G}(U,U)$ term could be cancelled by $\mathcal{D}\mathcal{G}(h)$. 
	
	For instance, if $\pi_\mathcal{K} \cD^2 \mathcal{G} (U,U) =0$, then we can cancel the corresponding term entirely by solving $LW=-\cD^2\varphi(U,U)$, and we expect $\frac{1}{2}W$ to be the principal part of $h$. (Strictly, we have used ellipticity of $L$ to identify the complement $\mathcal{C}$ of $\im \mathcal{G}$ with $\mathcal{K}$.) Indeed, if we decompose $h=h'+\frac{1}{2}W$, then
	\[
		\mathcal{G}(V)= \cD\mathcal{G}(h')+\frac{1}{2}\cD^2\mathcal{G}(U,W)+\frac{1}{6}\cD^3\mathcal{G}(U,U,U)+\cD^2\mathcal{G}(U,h')+\cdots
	\]
	Again, invertibility of $L$ on $\mathcal{K}^\perp$ shows that the residual $h'$ is of third order in $\|V\|$. Thus the first three terms are all third order, so an order 3 obstruction for $\mathcal{G}$ on $\mathcal{K}$, corresponding to the nondegeneracy of $\cD^3 \mathcal{G}(U,U,U)$ might cancel with $\cD^2\mathcal{G}(U,W)$ as well as $Lh'$. 
	
	Moreover, we have freedom to repeat the process by attempting to solve
	\begin{equation}\label{eq:LW'equation}
		LW'=-\left(\frac{1}{2}\cD^2\varphi(U,W)+\frac{1}{6}\cD^3\varphi(U,U,U)\right),
	\end{equation}
	and we would expect $W'$ to be the principal part of $h'$. This process continues until the projection to $\mathcal{K}$ is nonzero, which means that the corresponding equation is unsolvable. As in \cite{ELS18} this corresponds to the Jacobi field $U$ being non-integrable. 
	
	To obtain a quantitative rigidity theorem, we need these obstructions to occur in a uniform way, that is, at the same order for each $U$, and the projection of the right hand side to $\mathcal{K}$ should be bounded below; this quantifies the failure to solve the corresponding equation at that order. For $ \mathbb{S}^{k_1}_{\sqrt{2k_1}} \times \mathbb{S}^{k_2}_{\sqrt{2k_2}}$ as a self-shrinker, we find $\pi_\mathcal{K}\cD^2 \mathcal{G} (U,U) =0$ and $\|\pi_\mathcal{K}(\frac{1}{2}\cD^2\varphi(U,W)+\frac{1}{6}\cD^3\varphi(U,U,U))\| \geq C_3 \|U\|^3$ for all $U\in\mathcal{K}$, which gives an order 3 obstruction. This form of uniform obstruction was first observed in \cite{ELS18}, and the idea of directly estimating the size of the residuals at each order is inspired by \cite{CM19}. 
	
	
	\subsection{Technical considerations}
	Among the techniques we introduced in this paper, we want to emphasize two of them which play an important role. We believe that these techniques may also play an important role in other problems.

	The first is that we perform estimates in both H\"older spaces (Section \ref{sec:quant-rigidity}) and Sobolev spaces (Section \ref{sec:sobolev}). H\"{o}lder estimates would be enough to conclude rigidity, but Sobolev norms are natural for the \L ojasiewicz inequalities. Each also has its own technical advantages: Sobolev spaces behave well with our $L^2$-orthogonal decompositions, whilst H\"{o}lder norms behave well with products, which arise from Taylor expansion. It will be crucial that we first establish H\"{o}lder estimates, which allow us to iteratively estimate the Sobolev norms of higher degree terms using a refinement of the techniques in \cite{CM19} (see also \cite{Z20}). The subspace of Jacobi fields is an important bridge as it is a finite dimensional space, so the H\"older and Sobolev norms are equivalent.

	Second, we solve the linear elliptic equation $LW=-\mathcal{D}^2\mathcal{\varphi}(U,U)$ explicitly on $\mathbb{S}^{k_1}_{\sqrt{2k_1}} \times \mathbb{S}^{k_2}_{\sqrt{2k_2}}$, where $U$ is a non-rotational Jacobi field. This is challenging because there is no explicit formula for $L^{-1}$. In Section \ref{sec:jacobi} we show that $U = u\mathbf{H}$, where $u = \sum c_{ij} x_i y_j$ is a combination of products of coordinate functions on each factort; however, this would still mean solving a system of $O(k_1k_2)$ linear equations to find $W$. The resulting form of $W$ would also heavily complicate the later analysis. A key observation to keep the analysis tractable is that we may use the symmetry of each spherical factor to bring $u$ to the form $\sum a_i x_i y_i$. This actually yields a system of only 4 linear equations to solve (see Section \ref{sec:solW}). 
	
	We also choose not to compute $\frac{1}{2}\cD^2\varphi(U,W)+\frac{1}{6}\cD^3\varphi(U,U,U)$ completely explicitly, as suggested in \cite{ELS18}. Instead, to prove that the projection to $\mathcal{K}$ is nondegenerate, it is sufficient to take the inner product with $U$ itself, which is more tractable. 
	
	\subsection{Organisation of the paper}
	In Section \ref{sec:prelim}, we discuss some basic Riemannian geometry related to self-shrinkers and set out our notation and conventions. We then calculate general third variation formula in Section \ref{sec:variation}. In Section \ref{sec:jacobi}, we study the Jacobi fields on products of spheres. Analysis of higher order variations is presented in Section \ref{sec:analysis}. Using Taylor expansion, these are used in Section \ref{sec:quant-rigidity} to prove quantitative estimates, including the H\"{o}lder case of Theorem \ref{thm:quant-rigidity-intro}. The Sobolev case and the proof of Theorem \ref{thm:gradient-L} are then proven in Section \ref{sec:sobolev}. 
	Finally, Appendix \ref{sec:poly} contains the details of some calculations used in Section \ref{sec:analysis}.
	
	\subsection{Acknowledgements}
	
	The authors would like to thank Prof. Bill Minicozzi for his invaluable guidance and continued support, as well as Prof. Felix Schulze for insightful discussions about his work and helpful suggestions. 
	
	JZ was supported in part by the National Science Foundation under grant DMS-1802984 and the Australian Research Council under grant FL150100126.

	\section{Preliminaries}
	\label{sec:prelim}
	Let us recall some basic notions in differential geometry. We will follow the notations and conventions in \cite{CM19}. Suppose $\Sigma^n\subset\R^N$ is an $n$-dimensional submanifold. The second fundamental form is defined by $A(X,Y)=\nabla^\perp_X Y$. The mean curvature vector $\mathbf{H}$ is defined as minus the trace of the second fundamental form. Given a vector (field) $V$ on $\Sigma$, $A^V$ is defined to be the symmetric two tensor $\langle A,V\rangle$. We can also write these quantities in a local normal coordinate chart. Suppose $\{F_i\}$ is a tangent frame on $\Sigma$, then we denote by $A_{ij}=A(F_i,F_j)$, and $\mathbf{H}=- A_{ii}$. Here, and henceforth, we use the convention that repeated (tangent) indices are contracted via the metric. 
		
	Let $\phi$ be the normal vector field $\phi=\frac{x^\perp}{2}-\mathbf{H}$. A submanifold is a self-shrinker if and only if $\phi\equiv 0$. There are two important operators on self-shrinkers
	\[\cL=\Delta-\frac{1}{2}\nabla_{x^T},\]
	\[L=\cL+\frac{1}{2}+\sum_{k,l}\langle A_{kl},\cdot\rangle A_{kl}.\]
	The operator $\cL$ is defined on functions and tensors, while $L$ is defined on functions and tensors with values in the normal bundle.
	
	Now we specialise to submanifolds given by products of spheres. Let $\Sigma=\prod_{b=1}^B \mathbb{S}^{k_b}_{2k_b}\subset\prod_{b=1}^B\R^{k_b+1}\subset\R^N$. Let $x$ be the position vector field on $\Sigma$, and $g$ be the metric on $\Sigma$. On each sphere $\mathbb{S}^{k_b}_{\sqrt{2k_b}}$ we $N_b$ to be the outward unit normal, so that 
	\begin{equation}
		\begin{split}
			x= \sum_b \sqrt{2k_b}N_b,\ \ 
			A= - \sum_b \frac{g^b}{\sqrt{2k_b}}N_b,\ \ 
			\mathbf{H}= \sum_b\sqrt{\frac{k_b}{2}}N_b.		
		\end{split}
	\end{equation}
	Here $g^b$ is the metric on the factor $\mathbb{S}^{k_b}_{\sqrt{2k_b}}$. 
	
	As a consequence, on a product of spheres the $\mathcal{L}$ and $L$ operators are simply 
	\begin{equation}
		\cL=\Delta
	\end{equation} and 
	\begin{equation}\label{eq:L on product}
		L=\Delta+\frac{1}{2}+\frac{1}{2}\sum_{b=1}^B\Pi_{N_b},
	\end{equation}
	where $\Pi_{N_b}$ is the projection operator to $N_b$.

	\section{Jacobi fields}
	\label{sec:jacobi}

	In this section, we consider the self-shrinker $\Sigma^n =\prod_{b=1}^B \mathbb{S}^{k_b}_{\sqrt{2k_b}}\subset\prod_{b=1}^B\R^{k_b+1}\subset\R^N$ and analyse the constraints imposed by the first variation of $\phi$. That is, we determine the space $\mathcal{K} = \ker L$ of Jacobi fields on $\Sigma$. We also describe the space $\mathcal{K}_0$ of rotational Jacobi fields, and finally the space $\mathcal{K}_1$ of Jacobi fields orthogonal to rotations. 
	
	Recall that on $\mathbb{S}^{k}_{\sqrt{2k}} \subset \mathbb{R}^{k+1}$, the first non-zero eigenvalue of the Laplacian (on functions) is $\frac{1}{2}$, the space of $\frac{1}{2}$-eigenfunctions is spanned by $\theta_1, \cdots, \theta_{k+1}$, where $\theta_i = x_i |_{\mathbb{S}^{k}_{\sqrt{2k}}}$, and the next eigenvalue is strictly greater than $1$.
	
	As above let $\Sigma =\prod_{b=1}^B \mathbb{S}^{k_b}_{\sqrt{2k_b}}\subset\prod_{b=1}^B\R^{k_b+1}\subset\R^N$. We denote by $x^b_1,\cdots,x^b_{k_b+1}$ the standard coordinates on each $\R^{k_b+1}$, and $z_1,\cdots,z_{N-n-B}$ the standard coordinates on the rest of $\R^N$. As above we define $\theta^b_i=x^b_i |_\Sigma$. 
	
	The next proposition identifies all Jacobi fields on $\Sigma$.
	
	\begin{proposition}
		\label{prop:jacobi}
		Let $\Sigma^n =\prod_{b=1}^B \mathbb{S}^{k_b}_{\sqrt{2k_b}} \subset \mathbb{R}^N$. Suppose that $V\in \mathcal{K}$ is a Jacobi field, that is $LV=0$. Then for some $s^a_{ij}, t^\alpha_i \in \mathbb{R}$, we have
		\begin{equation}\label{eq:Jacobi V}
			V=\sum_{\substack{ b\neq c\\ a,i,j  }} s^a_{ij}\theta^b_i\theta^c_j N_a+\sum_{\alpha,i,b} t^\alpha_i\theta^b_i \partial_{z_\alpha}.
		\end{equation}
		
	\end{proposition}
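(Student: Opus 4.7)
The plan is to exploit the fact that the normal bundle of $\Sigma$ admits a global parallel orthonormal frame, reducing the vector equation $LV=0$ to an uncoupled system of scalar eigenvalue equations on $\Sigma$. First I would verify that each factor normal $N_a$ is parallel in the normal bundle: given $X$ tangent to $\Sigma$, decompose $X = X_a + X'$ where $X_a$ is tangent to $\mathbb{S}^{k_a}_{\sqrt{2k_a}}$ and $X'$ is tangent to the other factors. Then $\nabla_{X'} N_a = 0$ in $\R^N$ because $N_a$ depends only on the $a$-th position, while $\nabla_{X_a} N_a$ is tangent to $\mathbb{S}^{k_a}_{\sqrt{2k_a}}$ by the Weingarten identity; in either case the normal projection vanishes. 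The ambient coordinate vectors $\partial_{z_\alpha}$ are parallel in $\R^N$ and lie in the normal bundle, so they are parallel a fortiori. Hence $\{N_1,\ldots,N_B,\partial_{z_1},\ldots,\partial_{z_{N-n-B}}\}$ forms a global parallel orthonormal frame for the normal bundle of $\Sigma$.

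Writing $V = \sum_a v_a N_a + \sum_\alpha w_\alpha \partial_{z_\alpha}$, parallelism implies that the connection Laplacian acts componentwise as the scalar Laplacian on the coefficients. Combined with the identities $\Pi_{N_b} N_a = \delta_{ab} N_a$ and $\Pi_{N_b} \partial_{z_\alpha} = 0$, the formula \eqref{eq:L on product} shows that $LV = 0$ is equivalent to the uncoupled scalar system
\[
    \Delta v_a + v_a = 0 \quad (1 \le a \le B), \qquad \Delta w_\alpha + \tfrac{1}{2} w_\alpha = 0 \quad (1 \le \alpha \le N-n-B).
\]

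The remaining step is to identify all smooth solutions of these two equations on $\Sigma$. Since $\Sigma$ is a Riemannian product, the spectrum of $-\Delta$ on $\Sigma$ is the Minkowski sum of the factor spectra, and a tensor-product basis of factor eigenfunctions diagonalises. On each $\mathbb{S}^{k_b}_{\sqrt{2k_b}}$ the eigenvalues of $-\Delta$ begin $0, \tfrac{1}{2}, > 1$, with the $\tfrac{1}{2}$-eigenspace spanned by the restricted coordinates $\theta^b_1,\ldots,\theta^b_{k_b+1}$, as recalled just before the proposition. Hence any solution of $\Delta w + \tfrac{1}{2} w = 0$ must have exactly one factor contributing eigenvalue $\tfrac{1}{2}$ and the rest contributing $0$, forcing $w_\alpha \in \spa\{\theta^b_i\}$. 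For $\Delta v + v = 0$, since no single factor admits eigenvalue $1$, the only way the factor eigenvalues can sum to $1$ is for exactly two distinct factors to contribute $\tfrac{1}{2}$, forcing $v_a \in \spa\{\theta^b_i \theta^c_j : b \neq c\}$. Collecting these expansions yields the form \eqref{eq:Jacobi V}.

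The main --- and essentially only --- obstacle is confirming the spectral gap above $\tfrac{1}{2}$ on each spherical factor, which is classical spherical harmonics and is noted immediately before the proposition. Once that is in hand, together with the parallel frame observation, the proof is pure separation of variables.
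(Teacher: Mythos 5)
Your proof is correct and follows the same route as the paper: decompose $V$ in the parallel normal frame $\{N_a,\partial_{z_\alpha}\}$, use \eqref{eq:L on product} to reduce $LV=0$ to the uncoupled scalar equations $(\Delta+1)v_a=0$ and $(\Delta+\tfrac12)w_\alpha=0$, and then identify the eigenspaces via the product structure and the spectral gap $0,\tfrac12,>1$ on each spherical factor. You supply a bit more detail than the paper (the explicit Weingarten argument for parallelism of $N_a$, and the case analysis showing eigenvalue $1$ can only arise as $\tfrac12+\tfrac12$), but the underlying argument is identical.
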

	
	\begin{proof}
		Since the normal space to $\Sigma$ is spanned by the orthonormal parallel vector fields $\{N_b,\partial_{z_\alpha}\}$, we can always decompose a normal vector field $V$ into 
		\[V=\sum_b V^bN_b+\sum_\alpha V^\alpha \partial_{z_\alpha}.\]
		Then by \eqref{eq:L on product}, 
		\begin{equation}
			LV=\sum_b (\Delta+1)V^bN_b +\sum_{\alpha} (\Delta+\frac{1}{2})V^\alpha \partial_{z_\alpha}.
		\end{equation}
		Thus finding the Jacobi fields reduces to studying the eigenvalue problem for the Laplacian (on functions). 
		
		Note that $\Sigma$ is a product manifold, so its eigenvalues are sums of eigenvalues on each factor, and the eigenspaces are spanned by the products the eigenfunctions on each factor. Therefore we have the following:
		\begin{itemize}
			\item the $1$-eigenspace of $\Lap$ is spanned by $\{\theta^b_i\theta^c_j\}_{b\neq c}	$ ,
			\item the $\frac{1}{2}$-eigenspace of $\Lap$ is spanned by $\{\theta^b_i\}$.
		\end{itemize}
		This completes the proof. 
	\end{proof}
	
	Next we discuss the Jacobi fields generated by rotations. Every rotation $\mathcal{R} \in \mathrm{SO}(N)$ is generated by a vector field on $\mathbb{R}^N$. Its projection to the normal bundle of $\Sigma$ defines a Jacobi field $V_\mathcal{R}$. We define $\mathcal{K}_0:= \{V_\mathcal{R} | \mathcal{R}\in \mathrm{SO}(N)\} \subset \mathcal{K}$ to be the space of rotational Jacobi fields, or (infinitesimal) rotations for short. 
	
	\begin{lemma}
		Let $\Sigma^n =\prod_{b=1}^B \mathbb{S}^{k_b}(\sqrt{2k_b}) \subset \mathbb{R}^N$. Suppose that $V\in \mathcal{K}_0$ is a rotational Jacobi field. Then for some $s_{ij}, t^\alpha_i \in \mathbb{R}$, we have
		\begin{equation}\label{eq:Jacobi rot V}
			V=\sum_{\substack{b\neq c\\ i,j }} s_{ij} \theta^b_i\theta^c_j(\frac{1}{\sqrt{2k_c}}N_b-\frac{1}{\sqrt{2k_b}}N_c) +\sum_{\alpha,i,b} t^\alpha_i\theta^b_i \partial_{z_\alpha}.
		\end{equation}
		
	\end{lemma}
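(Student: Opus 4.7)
The plan is to directly compute the normal projection of the ambient vector field generated by an arbitrary rotation. Every $\mathcal{R} \in \mathrm{SO}(N)$ arises as $\exp(tA)$ for some skew-symmetric $A \in \mathfrak{so}(N)$, and the associated Jacobi field is $V_\mathcal{R} = (X_A)^\perp$ along $\Sigma$, where $X_A(x) = Ax$. Using the orthogonal splitting $\R^N = \bigoplus_{b=1}^B \R^{k_b+1} \oplus \R^{N-n-B}$, I would decompose $A$ as a linear combination of standard skew generators, which fall into four types according to which pair of subspaces they mix; since $\mathfrak{so}(N) \cong \Lambda^2 \R^N$ respects this decomposition, these generators span.

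For the two ``diagonal'' types the contribution to $V_\mathcal{R}$ vanishes: a rotation internal to a single $\R^{k_b+1}$ preserves $\mathbb{S}^{k_b}_{\sqrt{2k_b}}$ and hence generates a vector field tangent to $\Sigma$, while a rotation internal to the $z$-subspace vanishes identically on $\Sigma$ since $z_\alpha \equiv 0$ there. The two ``off-diagonal'' types do the real work. For a generator $A$ mixing $e^b_i$ and $e^c_j$ with $b \neq c$, I would evaluate $X_A$ on $\Sigma$ to get $\theta^b_i e^c_j - \theta^c_j e^b_i$ and then project onto the normal bundle using $N_a = x^a/\sqrt{2k_a}$ along $\Sigma$, so that $(e^a_k)^\perp = (\theta^a_k/\sqrt{2k_a}) N_a$. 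This produces (up to the harmless redundancy of summing over ordered rather than unordered pairs $(b,c)$, and modulo the symmetry $(b,c) \leftrightarrow (c,b)$ that is manifest in the statement) the first sum in \eqref{eq:Jacobi rot V}. For a generator mixing $e^b_i$ with $\partial_{z_\alpha}$, the vector field restricts to $\theta^b_i \partial_{z_\alpha}$, which is already normal and directly yields the second sum.

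No substantive obstacle arises: the normal bundle of a product of round spheres has an entirely explicit description via the $N_b$ and $\partial_{z_\alpha}$, and each case reduces to a one-line orthogonal projection. The only care required is in the index bookkeeping, together with the observation that since rotations of $\R^N$ preserve the Gaussian area functional $\mathcal{F}$, every such $(X_A)^\perp$ is automatically a Jacobi field, so there is no need to verify $LV=0$ separately.
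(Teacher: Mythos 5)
Your argument is essentially the paper's: both proofs use that $\mathrm{SO}(N)$ is spanned by two-plane rotations, sort the generators by which pair of factors of $\R^N = \bigoplus_b \R^{k_b+1}\oplus\R^{N-n-B}$ they mix, discard the two diagonal cases (one because the vector field is tangent, one because $z_\alpha\equiv 0$ on $\Sigma$), and project the two off-diagonal cases using $(\partial_{x^a_k})^\perp = (\theta^a_k/\sqrt{2k_a})N_a$. Your observation that rotations preserve $\mathcal{F}$ so that no separate verification of $LV=0$ is needed is implicit in the paper's use of the term ``rotational Jacobi field,'' and your tangency argument for the intra-$\R^{k_b+1}$ case is a slightly cleaner way to see what the paper checks by direct cancellation.

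One small but worth flagging point: if you actually carry out the projection you describe, the generator $x^b_i\partial_{x^c_j}-x^c_j\partial_{x^b_i}$ gives
\[
\theta^b_i\theta^c_j\left(\tfrac{1}{\sqrt{2k_c}}N_c-\tfrac{1}{\sqrt{2k_b}}N_b\right),
\]
i.e.\ each $N_a$ is paired with its own $\sqrt{2k_a}$, not the opposite one as written in \eqref{eq:Jacobi rot V}. The form as printed in the lemma fails to be $L^2$-orthogonal to the $\mathcal{K}_1$ basis $\theta^1_i\theta^2_j(\sqrt{2k_1}N_1+\sqrt{2k_2}N_2)$ when $k_1\neq k_2$, so the statement has a minor index transposition which your careful bookkeeping would catch. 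This does not affect the rest of the paper, since for $k_1=k_2$ the two forms agree up to sign and Proposition \ref{prop:jacobi-prod-2} is derived independently.
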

	\begin{proof}
		The group $\mathrm{SO}(N)$ is generated by rotations of 2 coordinates at a time. For two coordinate functions $x,y$ on $\R^N$, the rotation in the $xy$ plane is generated by the vector field $x \pr_y - y\pr_x$. Let $V_{xy} \in \mathcal{K}_0$ be the corresponding rotational Jacobi field. 
		
		Note that $\langle \partial_{x^b_i},N_b\rangle=\frac{1}{\sqrt{2k_b}}\theta^b_i$ and $\langle \partial_{x^b_i},N_a\rangle=0$ when $a\neq b$. 
		We now have different cases based on which subspace of $\R^N = \prod_{b=1}^B \mathbb{R}^{k_b+1} \times \mathbb{R}^{N-n-B}$ the coordinates $x,y$ correspond to:
		\begin{itemize}
			\item $x=x^b_i$, $y=x^b_j$ both belong to the same $\R^{k_b+1}$. Then $V_{xy}=\frac{1}{\sqrt{2k_b}}(\theta^b_i\theta^b_j-\theta^b_j\theta^b_i)N_b=0$;
			\item $x=z_\alpha$, $y=z_\beta$ both belong to the complementary subspace $\R^{N-n-B}$. Then we again have $V_{xy}=0$;
			\item $x=x^b_i$ and $y=z_\alpha$. Then $V_{xy}=\frac{1}{\sqrt{2k_b}}z_\alpha\theta^b_iN_b-\theta^b_i \partial_{z_\alpha}=-\theta^b_i \partial_{z_\alpha}$;
			\item $x=x^b_i$ and $y=x^c_j$ while $b\neq c$. Then the rotation generates the Jacobi field $\theta^b_i\theta^c_j(\frac{1}{\sqrt{2k_c}}N_b-\frac{1}{\sqrt{2k_b}}N_c)$. 
		\end{itemize}
		This completes the proof.
	\end{proof}
	
	Modding out by the rotational Jacobi fields $\theta^b_i \pr_{z_\alpha}$ immediately gives:
	
	\begin{proposition}
		Let $\Sigma^n =\prod_{b=1}^B \mathbb{S}^{k_b}_{\sqrt{2k_b}}$ and define $\mathcal{K}_1$ to be the $L^2$-orthocomplement of $\mathcal{K}_0$ in $\mathcal{K}$, that is, the space of Jacobi fields orthogonal to rotations. If $V\in \mathcal{K}_1$, then for some $s^a_{ij}\in \mathbb{R}$ we have
		\begin{equation}
			V=\sum_{a,b\neq c,i,j}s^a_{ij}\theta^b_i\theta^c_j N_a,
		\end{equation}
	\end{proposition}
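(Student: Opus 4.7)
The plan is to read off the form of $V\in\mathcal{K}_1$ directly from Proposition \ref{prop:jacobi} by using the preceding lemma to identify which parts of a general Jacobi field lie in $\mathcal{K}_0$. Since $\mathcal{K}_1$ is defined as the $L^2$-orthocomplement of $\mathcal{K}_0$ inside $\mathcal{K}$, the task reduces to checking which components of the expression \eqref{eq:Jacobi V} survive after $L^2$-projection off $\mathcal{K}_0$.

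First I would decompose any $V\in\mathcal{K}$ according to \eqref{eq:Jacobi V} into its ``normal-to-the-factors'' part $\sum s^a_{ij}\theta^b_i\theta^c_j N_a$ and its ``complementary'' part $W:=\sum t^\alpha_i\theta^b_i\partial_{z_\alpha}$. The key observation is that the parallel normal frames $\{N_a\}$ and $\{\partial_{z_\alpha}\}$ are pointwise (hence $L^2$) orthogonal, so the $L^2$ pairing splits as a direct sum between these two subspaces of the normal bundle. In particular, the $\mathcal{K}_0$-component and the $\mathcal{K}_1$-component of $V$ decouple along this splitting.

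Next I would invoke the lemma: every rotational Jacobi field of the form $\theta^b_i\partial_{z_\alpha}$ arises as the ambient rotation $V_{xy}$ with $x=x^b_i$, $y=z_\alpha$ (up to sign). Combined with Proposition \ref{prop:jacobi}, which says the $\partial_{z_\alpha}$-valued Jacobi fields are exactly spanned by $\{\theta^b_i\partial_{z_\alpha}\}$, this shows the entire $\partial_{z_\alpha}$-directed subspace of $\mathcal{K}$ already lies in $\mathcal{K}_0$. Thus when we take the $L^2$-orthocomplement, the whole $W$-component is forced to vanish: any $V\in\mathcal{K}_1$ must have $t^\alpha_i=0$ for all $\alpha,i,b$. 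What remains is precisely an expression of the desired form $\sum s^a_{ij}\theta^b_i\theta^c_j N_a$, completing the proof.

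There is no serious obstacle here: the argument is essentially bookkeeping built on the two preceding results and the pointwise orthogonality of the normal frame. The only mild point to verify is that the $L^2$ splitting is consistent with the eigenspace splitting used in Proposition \ref{prop:jacobi}, which is immediate because distinct eigenspaces of $\Delta$ on each factor (and hence on $\Sigma$) are $L^2$-orthogonal and the normal frame $\{N_b,\partial_{z_\alpha}\}$ is parallel. Note that the proposition as stated gives only a necessary form for members of $\mathcal{K}_1$; extracting a basis would require further imposing $L^2$-orthogonality to the rotational fields $\theta^b_i\theta^c_j\bigl(\tfrac{1}{\sqrt{2k_c}}N_b-\tfrac{1}{\sqrt{2k_b}}N_c\bigr)$, but that refinement is not part of the present claim.
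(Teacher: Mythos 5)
Your proof is correct and follows essentially the same route the paper intends: start from the form \eqref{eq:Jacobi V} given by Proposition \ref{prop:jacobi}, use the lemma to identify the $\theta^b_i\partial_{z_\alpha}$ fields as rotational, and then use the pointwise $L^2$-orthogonality of the parallel frames $\{N_a\}$ and $\{\partial_{z_\alpha}\}$ to conclude that the $t^\alpha_i$ coefficients must vanish for any $V\in\mathcal{K}_1$. Your closing remark that the statement gives only a necessary (not sufficient) form also matches the paper's parenthetical note following the proposition.
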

	
	(Note that this is not a sufficient condition; not all choices of $s^a_{ij}$ yield $V\in \mathcal{K}_1$.) 
	
	In the case that $\Sigma$ is a product of two spheres, we can further simplify the expression of Jacobi fields in $\mathcal{K}_1$ after modding out by the remaining rotation fields $\theta^1_i\theta^2_j(\frac{1}{\sqrt{2k_2}}N_1-\frac{1}{\sqrt{2k_1}}N_2)$:
	
	\begin{proposition}
		\label{prop:jacobi-prod-2}
		Suppose $\Sigma=\mathbb{S}^{k_1}_{\sqrt{2k_1}}\times\mathbb{S}^{k_2}_{\sqrt{2k_2}} \subset \R^N$. If $V\in \mathcal{K}_1$, then	for some $s_{ij}\in \mathbb{R}$, \begin{equation}
			V=\sum_{i,j}s_{ij}\theta^1_i\theta^2_j(\sqrt{2k_1}N_1+\sqrt{2k_2}N_2).
		\end{equation}
	\end{proposition}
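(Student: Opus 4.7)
The plan is to combine the characterisation of $V \in \mathcal{K}_1$ from the preceding proposition with $L^2$-orthogonality against the remaining rotational Jacobi fields---those arising from rotations in the $(x^1_{i'}, x^2_{j'})$-planes, which were not dispatched by the earlier $\theta^b_i \partial_{z_\alpha}$ reduction.

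Specialising the preceding proposition to $B = 2$, the only admissible pairs are $b \neq c \in \{1, 2\}$ and $a \in \{1, 2\}$, and since $\theta^b_i \theta^c_j = \theta^c_j \theta^b_i$ the $(b, c) = (1, 2)$ and $(2, 1)$ contributions merge after relabeling $i \leftrightarrow j$. Thus every $V \in \mathcal{K}_1$ can be written as $V = \sum_{i, j}(\alpha_{ij} N_1 + \beta_{ij} N_2)\, \theta^1_i \theta^2_j$ for some coefficients $\alpha_{ij}, \beta_{ij} \in \mathbb{R}$. By the lemma above, the remaining rotational Jacobi fields take the form $W_{i'j'} = \theta^1_{i'} \theta^2_{j'}(\mu_1 N_1 + \mu_2 N_2)$ for an explicit nonzero pair $(\mu_1, \mu_2)$ read off from the generator mixing coordinates from the two factors. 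Computing $\langle V, W_{i'j'}\rangle_{L^2}$---using that $N_1, N_2$ are pointwise orthonormal in $\mathbb{R}^N$ and that $\{\theta^1_i \theta^2_j\}_{(i,j)}$ are mutually $L^2$-orthogonal on $\Sigma$ as products of distinct first nontrivial spherical harmonics on each factor---the pairing collapses to a positive multiple of $\mu_1 \alpha_{i'j'} + \mu_2 \beta_{i'j'}$. Setting this to zero for every $(i', j')$ produces one scalar constraint per index pair.

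This constraint forces $(\alpha_{ij}, \beta_{ij})$ to lie on the one-dimensional line in $\mathbb{R}^2$ perpendicular to $(\mu_1, \mu_2)$; a direct calculation with the explicit values recorded in the proof of the lemma identifies this line as the span of $(\sqrt{2k_1}, \sqrt{2k_2})$. Defining $s_{ij}$ by $(\alpha_{ij}, \beta_{ij}) = s_{ij}(\sqrt{2k_1}, \sqrt{2k_2})$ then yields the claimed form. The argument is essentially linear algebra together with orthogonality of spherical harmonics on each factor; there is no real obstacle, and the only delicate step is correctly identifying the one remaining family of rotational Jacobi fields within the $1$-eigenspace of $\Delta$ against which orthogonality must still be imposed.
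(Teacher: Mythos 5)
Your argument is exactly the paper's: the paper gives no separate proof of Proposition~\ref{prop:jacobi-prod-2}, merely saying that one mods out the remaining rotational Jacobi fields $\theta^1_i\theta^2_j(\tfrac{1}{\sqrt{2k_2}}N_1-\tfrac{1}{\sqrt{2k_1}}N_2)$, and that is precisely the orthogonality argument you carry out. The reduction to the pointwise constraint $\mu_1\alpha_{ij}+\mu_2\beta_{ij}=0$ via $L^2$-orthogonality of the $\theta^1_i\theta^2_j$ (product of first spherical harmonics, product measure) and pointwise orthonormality of $N_1,N_2$ is correct, and each orthogonality relation is decoupled across $(i,j)$.

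However, you should not trust the "explicit values recorded in the proof of the lemma" verbatim: the paper's displayed formula $\theta^b_i\theta^c_j(\tfrac{1}{\sqrt{2k_c}}N_b - \tfrac{1}{\sqrt{2k_b}}N_c)$ appears to have the subscripts on $N$ swapped. Since $(\partial_{x^b_i})^\perp = \tfrac{\theta^b_i}{\sqrt{2k_b}}N_b$ and $\langle\partial_{x^b_i},N_c\rangle = 0$ for $c\neq b$, the normal projection of the generator $x^1_i\partial_{x^2_j} - x^2_j\partial_{x^1_i}$ is
\begin{equation*}
\theta^1_i\theta^2_j\left(\frac{N_2}{\sqrt{2k_2}} - \frac{N_1}{\sqrt{2k_1}}\right),
\end{equation*}
so the correct pair is $(\mu_1,\mu_2)=(-1/\sqrt{2k_1},\,1/\sqrt{2k_2})$. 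Its Euclidean orthocomplement in the $(N_1,N_2)$-plane is the line through $(\sqrt{2k_1},\sqrt{2k_2})$, and your proof closes. If instead you literally use the paper's printed $(\mu_1,\mu_2)=(1/\sqrt{2k_2},\,-1/\sqrt{2k_1})$, the orthocomplement is the line through $(1/\sqrt{2k_1},1/\sqrt{2k_2})$, which is parallel to $(\sqrt{2k_1},\sqrt{2k_2})$ only when $k_1=k_2$. So the sentence "a direct calculation with the explicit values recorded in the proof of the lemma identifies this line as the span of $(\sqrt{2k_1},\sqrt{2k_2})$" does not hold as written; you need to re-derive (or correct) those coefficients before the identification goes through.
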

	
	\section{General variation of $\phi$}
	\label{sec:variation}
	
	In this section we consider (normal) variations of a submanifold, and describe the first, second and third variation of geometric quantities, in particular for $A$ and $\phi$. 
	
	Let $\Sigma$ be a submanifold with a fixed immersion $F_0 :\Sigma^n \rightarrow \mathbb{R}^N$, and a one-parameter family of immersions $F: I\times \Sigma^n\rightarrow \mathbb{R}^N$ with $F(0,p)=F_0(p)$. Using $s$ for the coordinate on $I=(-\epsilon,\epsilon)$, and subscripts to denote differentiation with respect to $s$. If $p_i$ are local coordinates on $\Sigma$, we get the tangent frame $F_i = F_*(\frac{\pr}{\pr p^i})$. 
	
	All geometric quantities such as $\Pi, g,A$ should be considered as functions of $s,p$, given by the value of each quantity at $F(s,p)$ on the submanifold defined by $X(s,\cdot)$. For instance, the metric $g_{ij}(s,p)$ is given by $g_{ij}=\langle F_i, F_j\rangle$. Recall $\Pi$ is the projection to the normal bundle. Also recall our convention that repeated lower indices are contracted via the (inverse) metric $g^{ij}$, although we will raise indices when it suits the exposition. 
	
	\subsection{First and second variation}
	
	\begin{proposition}[\cite{CM19}]
		\label{prop:1stvar}
		At $s=0$, let $F_s=V=V^\perp$; then we have
		
		\begin{alignat}{3}
			\label{eq:dPi}
			&\Pi_s(W) &&= -\Pi(\nabla_{W^T} V)  - F_j g^{ij}\langle \Pi(\nabla_{F_i} F_s), W\rangle,\\
			\label{eq:dg}
			&(g_{ij})_s &&= -2A^V_{ij}, (g^{ij})_s = 2g^{ik} A^V_{km} g^{mj}, \\
			\label{eq:dphi}
			&\phi_s = \mathcal{D}\mathcal{\varphi} (V)&&= LV - F_j g^{ij} \langle \nabla^\perp_{F_i} V,\phi\rangle.\end{alignat}
	\end{proposition}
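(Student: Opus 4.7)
The plan is to verify each of the three formulas by direct computation from first principles, using two repeated key inputs: the commutation relation $F_{si} = F_{is} = \nabla_{F_i} V$ (which follows from $F_s = V$ and the torsion-freeness of the Euclidean connection), and the normality assumption $\langle V, F_i\rangle = 0$. Although the result is attributed to \cite{CM19}, I would derive each formula independently, starting with the easiest and working up to the most involved.

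First I would tackle \eqref{eq:dg}. Starting from $g_{ij} = \langle F_i, F_j\rangle$ in $\R^N$ and differentiating in $s$ yields $(g_{ij})_s = \langle \nabla_{F_i} V, F_j\rangle + \langle F_i, \nabla_{F_j} V\rangle$; using normality of $V$, each summand equals $-\langle V, \nabla_{F_i} F_j\rangle = -\langle V, A_{ij}\rangle = -A^V_{ij}$, yielding the first identity. Differentiating $g^{ij} g_{jk} = \delta^i_k$ and substituting then immediately gives the formula for $(g^{ij})_s$.

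Next I would handle \eqref{eq:dPi} by writing the projection explicitly as $\Pi(W) = W - g^{ij}\langle W, F_i\rangle F_j$ for an ambient vector $W$ held fixed in $\R^N$. Differentiating in $s$ produces three contributions: a $(g^{ij})_s$ term, a term with $F_{is} = \nabla_{F_i} V$ paired against $W$, and a term where $F_{js}$ replaces $F_j$. After decomposing $W = W^T + W^\perp$, the tangential piece collects into $-\Pi(\nabla_{W^T} V)$, with the $(g^{ij})_s$ contribution providing the crucial cancellation that allows this clean form; the remaining piece couples the normal part $W^\perp$ against $\nabla_{F_i} V$ to produce the second term $-F_j g^{ij}\langle \Pi(\nabla_{F_i} F_s), W\rangle$, where the outer $\Pi$ lets us replace $W$ by $W^\perp$ freely.

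Finally, for \eqref{eq:dphi} I would differentiate $\phi = \tfrac{1}{2}x^\perp - \mathbf{H}$ term by term. For the first term, $x^\perp = \Pi(x)$, so its $s$-derivative is $\Pi_s(x) + \Pi(V) = \Pi_s(x) + V$, into which I substitute \eqref{eq:dPi}. For the mean curvature, differentiating $\mathbf{H} = -g^{ij} A_{ij}$ with $A_{ij} = \Pi(\nabla_{F_i} F_j)$ introduces the just-derived $\Pi_s$ together with the standard second derivative $\nabla^\perp_i \nabla^\perp_j V$ and Simons-type curvature corrections. The task then reduces to bookkeeping: identifying $\Delta V + \tfrac{1}{2} V - \tfrac{1}{2}\nabla_{x^T} V + \langle A_{kl}, V\rangle A_{kl}$ as $LV$, and verifying that every leftover term reorganises into $-F_j g^{ij}\langle \nabla^\perp_{F_i} V, \phi\rangle$. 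The main obstacle will be precisely this last step: ensuring that the residual contributions — which superficially involve $x^T$ and $A$ separately — recombine exactly into a contraction against $\phi = \tfrac{1}{2}x^\perp - \mathbf{H}$. The strong consistency check is that on a self-shrinker ($\phi = 0$) the formula must collapse to $\phi_s = LV$, so any stray terms failing to group into a $\phi$-contraction would indicate an algebra error.
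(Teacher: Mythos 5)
The paper does not prove Proposition~\ref{prop:1stvar}; it is quoted from~\cite{CM19}, so your proposal to derive all three formulae from first principles is a reconstruction of the cited reference rather than an alternative to an argument in the paper, and it is correct. The two inputs you single out---commutation $F_{si}=\nabla_{F_i}V$ and normality $\langle V, F_i\rangle=0$---are exactly what drives the whole computation, and your derivation of~\eqref{eq:dg} is complete as written. For~\eqref{eq:dPi}, differentiating $\Pi(W) = W - g^{ij}\langle W, F_i\rangle F_j$ does produce the three terms you describe; one small refinement is that the cancellation yielding $-\Pi(\nabla_{W^T}V)$ in place of $-\nabla_{W^T}V$ requires contributions from all three terms, not the $(g^{ij})_s$ term alone: writing $w^k = g^{ik}\langle W, F_i\rangle$, the purely tangential $(g^{ij})_s$-piece $-2w^k A^V_{km}g^{mj}F_j$, the tangential part $+w^k A^V_{km}g^{mj}F_j$ of $-\nabla_{W^T}V$, and the $W^T$-contribution $+g^{mj}w^k A^V_{km}F_j$ from the middle term sum to zero, leaving exactly the stated normal pieces. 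For~\eqref{eq:dphi}, the bookkeeping you anticipate does close in the way you predict: using
$\Pi(V_{ij}) = \nabla^\perp_i\nabla^\perp_j V - g^{km}A^V_{jk}A_{im} + \Gamma^k_{ij}\nabla^\perp_k V$,
the Christoffel piece cancels an identical one produced by $-g^{ij}\Pi_s(F_{ij})$, the curvature piece combines with $-(g^{ij})_s\Pi(F_{ij})=-2A^V_{ij}A_{ij}$ to give the single Simons term $\langle A_{kl},V\rangle A_{kl}$ appearing in $L$, and the residual tangential terms coming from $\tfrac12\Pi_s(x)$ and from $-g^{ij}\Pi_s(F_{ij})$ are each of the form $F_jg^{ij}\langle\nabla^\perp_{F_i}V,\cdot\rangle$ paired against $\tfrac12 x^\perp$ and $-\mathbf{H}$ respectively, so they assemble precisely into $-F_jg^{ij}\langle\nabla^\perp_{F_i}V,\phi\rangle$. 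Your consistency check that the formula must collapse to $\phi_s=LV$ on a shrinker is apt and, as the above shows, it passes.
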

	
	For the second variation
	
	\begin{lemma}
		\label{lem:d2Pi}
		At $s=0$, assume that $F_s^T= F_{ss}^T=0$; then the second variation $\Pi_{ss}$ acts by:

		\begin{alignat}{3}
					\label{eq:d2PiTan}
			&\Pi_{ss}(W^T)&& = -\Pi(\nabla_{W^T}F_{ss}) + 2\Pi(\nabla_{\nabla^T_{W^T} F_s} F_s) + 2 F_j g^{ij} \langle \Pi(\nabla_{F_i} F_s), \nabla_{W^T} F_s\rangle.\\
			\label{eq:d2PiNT}
			&\langle F_i, \Pi_{ss}\Pi(W)\rangle &&= 2\langle W,\Pi(\nabla_{\nabla^T_{F_i} F_s}F_s)\rangle - \langle W,\Pi(\nabla_{F_i} F_{ss})\rangle.\\
						\label{eq:d2PiNN}
			&\Pi \Pi_{ss}\Pi(W) &&= -2g^{ij}\langle W,\Pi(\nabla_{F_i} F_s)\rangle \Pi(\nabla_{F_j}F_s).\end{alignat}
		
	\end{lemma}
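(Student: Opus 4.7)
The plan is to prove (3.10) first from the algebraic identity $\Pi^2 = \Pi$, then (3.8) by direct differentiation of (3.5), and finally deduce (3.9) from (3.8) via self-adjointness of $\Pi_{ss}$. Throughout I work in normal coordinates at $p$ at $s=0$; since the identities are tensorial this is without loss of generality.

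For (3.10), differentiating $\Pi^2 = \Pi$ once gives $\Pi_s\Pi + \Pi\Pi_s = \Pi_s$, so $\Pi\Pi_s\Pi = 0$. Differentiating once more and multiplying on the left by $\Pi$ yields
\[
\Pi\Pi_{ss}\Pi = -2\Pi\Pi_s\Pi_s.
\]
For $W$ with $\Pi W = W$ normal, (3.5) with $W^T=0$ gives the tangent vector $\Pi_s(W) = -F_jg^{ij}\langle\Pi(\nabla_{F_i}V), W\rangle$. Applying (3.5) again to each basis vector $F_j$ (the second term vanishes since $\langle\Pi(\cdot),F_j\rangle=0$) gives $\Pi_s(F_j) = -\Pi(\nabla_{F_j}V)$. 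Composing and projecting produces (3.10) directly.

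For (3.8), I rewrite (3.5) in the coordinate form
\[
\Pi_s(W) = -g^{ij}\langle F_j, W\rangle\Pi(F_{si}) - g^{ij}F_j\langle\Pi(F_{si}), W\rangle,
\]
valid at every $s$ (with $W$ fixed in $\mathbb{R}^N$), and differentiate in $s$ to generate eight terms. Evaluating at $s=0$ with $W=W^T$ tangent and substituting $F_s=V$, $F_{si}=\nabla_{F_i}V$, $F_{ssi}=\nabla_{F_i}F_{ss}$, $(g^{ij})_s=2A^V_{ij}$, and the key identity $(\nabla_{F_i}V)^T = -A^V_{ij}F_j$, the calculation organises as follows. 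Terms containing the pairing $\langle\Pi(\cdot), W^T\rangle$ mostly vanish since $W^T$ is tangent and $\Pi$-images are normal. The $F_{ssi}$ contribution yields $-\Pi(\nabla_{W^T}F_{ss})$. The various $A^V$-contributions (arising from $(g^{ij})_s$, from the tangent component of $F_{sj}$, and from $\Pi_s(F_{si})$, which itself is computed from (3.5) applied to $\nabla_{F_i}V$) combine via $A^V_{ij}=A^V_{ji}$ into $2\Pi(\nabla_{\nabla^T_{W^T}V}V)$. The surviving tangent-valued terms, coming from $\Pi_s(F_{si})$ both in the first and second bracket contributions, combine via the self-adjointness identity $\langle\Pi(X),Y\rangle = \langle\Pi(X),\Pi(Y)\rangle$ to give the third term $2F_jg^{ij}\langle\Pi(\nabla_{F_i}V),\nabla_{W^T}V\rangle$.

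For (3.9), since $\Pi$ is self-adjoint so is $\Pi_{ss}$, whence
\[
\langle F_i, \Pi_{ss}\Pi(W)\rangle = \langle\Pi_{ss}(F_i),\Pi(W)\rangle = \langle\Pi_{ss}(F_i), W\rangle.
\]
Applying (3.8) with $W^T=F_i$ and pairing with the normal vector $W$ kills the tangent-valued third term of (3.8), leaving exactly the RHS of (3.9). The main obstacle is the bookkeeping in the differentiation step for (3.8): one must carefully track and combine the $A^V$-terms with the tangent residuals, using the symmetry of $A^V$ and the self-adjointness of $\Pi$ to see the cancellations. Once (3.8) is in hand, (3.9) and (3.10) follow cleanly.
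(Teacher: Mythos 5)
Your proof is correct. The overall skeleton---kill the normal-to-normal part via $\Pi^2=\Pi$, get the tangent action by differentiation, then recover the cross term via self-adjointness of $\Pi_{ss}$---is exactly the strategy the paper uses (the paper does not give an explicit proof of this lemma, but the third-variation proof that follows makes the intended method clear, and it is the direct $k=2$ analogue).

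The one place you diverge is in establishing \eqref{eq:d2PiTan}. The paper's route is to differentiate $\Pi(F_i)=0$ twice, obtaining $\Pi_{ss}(F_i)+2\Pi_s(F_{si})+\Pi(F_{ssi})=0$, and then substitute \eqref{eq:dPi} once into the middle term. This produces \eqref{eq:d2PiTan} immediately in three clean pieces. You instead write \eqref{eq:dPi} in coordinate form and differentiate that identity in $s$, which generates eight terms and requires you to track $(g^{ij})_s$, the tangent component of $F_{sj}$, and $\Pi_s(F_{si})$ separately, and then observe the reassembly via $A^V_{ij}=A^V_{ji}$ and $\langle\Pi(X),Y\rangle=\langle\Pi(X),\Pi(Y)\rangle$. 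This is more bookkeeping for the same conclusion, and you correctly flag the one subtlety that makes it legitimate: the coordinate form of \eqref{eq:dPi} must be valid for all $s$ (with $W$ fixed in $\mathbb R^N$), not merely at $s=0$, before it can be differentiated; that holds because it comes directly from differentiating $\Pi(F_i)=0$ once and using $\Pi\Pi_s\Pi=0$ together with self-adjointness, none of which requires $F_s^T=0$. In short: your approach is valid and lands in the same place, but the route through $\Pi(F_i)=0$ is the leaner one, and is the one the paper evidently has in mind.

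Minor remarks: your derivation of \eqref{eq:d2PiNT} pairs $\Pi_{ss}(F_i)$ against $W$ and drops the tangent-valued third term; since both sides of \eqref{eq:d2PiNT} depend on $W$ only through $\Pi(W)$, it is cleanest to say you pair with $\Pi(W)$ rather than "the normal vector $W$", since the statement allows general $W$. Also you write $(g^{ij})_s=2A^V_{ij}$ and $(\nabla_{F_i}V)^T=-A^V_{ij}F_j$; these match the paper's \eqref{eq:dg} and the relation $\langle\nabla_i V, F_j\rangle = -A^V_{ij}$ under the paper's implicit index-raising convention, so they are fine, but worth being explicit about to avoid sign or factor slips in the eight-term bookkeeping.
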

	
	\begin{proposition}
		At $s=0$, assume that $F_s = V = V^\perp$ and $F_{ss}=0$; then we have
		\begin{alignat}{3}
			\label{eq:d2g}
		&	(g_{ij})_{ss}&& = 2 g^{kl}A^V_{ik} A^V_{lj} + 2\langle \nabla^\perp_i V, \nabla_j^\perp V\rangle, \\
			\label{eq:d2ginv}
		&	(g^{ij})_{ss} &&= g^{i i_2} g^{jj_2}( 6 g^{kl} A^V_{i_2 k} A^V_{l j_2} - 2\langle \nabla^\perp_{i_2} V, \nabla_{j_2}^\perp V\rangle), 
\end{alignat}
		
	\end{proposition}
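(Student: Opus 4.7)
Both identities reduce to direct computation once one uses two ingredients: that the ambient connection on $\R^N$ is flat, so $\nabla_s$ and $\nabla_i$ commute on $F_s$; and a tangential/normal decomposition of $\nabla_i V$.

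First, differentiate $g_{ij}=\langle F_i, F_j\rangle$ twice in $s$ to obtain
\begin{equation}
(g_{ij})_{ss}=\langle F_{iss},F_j\rangle+2\langle F_{is},F_{js}\rangle+\langle F_i,F_{jss}\rangle.
\end{equation}
Flatness of $\R^N$ gives $F_{iss}=\nabla_s\nabla_i F_s=\nabla_i\nabla_s F_s=\nabla_i F_{ss}$, and the hypothesis $F_{ss}=0$ makes $F_{iss}$ and $F_{jss}$ vanish. Thus $(g_{ij})_{ss}=2\langle\nabla_i V,\nabla_j V\rangle$. To recover the stated form, decompose $\nabla_i V$ into its tangential and normal parts. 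Since $V$ is normal and $F_k$ is tangent, differentiating $\langle V,F_k\rangle=0$ yields $\langle\nabla_i V,F_k\rangle=-\langle V,\nabla_i F_k\rangle=-A^V_{ik}$, so $(\nabla_i V)^T=-g^{kl}A^V_{ik}F_l$. Pairing gives
\begin{equation}
\langle\nabla_i V,\nabla_j V\rangle=g^{kl}A^V_{ik}A^V_{lj}+\langle\nabla^\perp_i V,\nabla^\perp_j V\rangle,
\end{equation}
which is \eqref{eq:d2g} up to the factor of $2$.

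For the inverse metric, I differentiate the identity $g^{ij}g_{jk}=\delta^i_k$ twice at $s=0$, obtaining
\begin{equation}
(g^{ij})_{ss}g_{jk}=-2(g^{ij})_s (g_{jk})_s-g^{ij}(g_{jk})_{ss}.
\end{equation}
Plugging in $(g^{ij})_s=2g^{ia}A^V_{ab}g^{bj}$ and $(g_{jk})_s=-2A^V_{jk}$ from Proposition \ref{prop:1stvar}, the first term contributes $+8\,g^{ij}g^{kl}A^V_{jk}A^V_{lk_0}$ (after relabelling) to $(g^{ij})_{ss}g_{jk_0}$; substituting the formula for $(g_{jk})_{ss}$ just derived contributes a further $-2\,g^{ij}g^{kl}A^V_{jk}A^V_{lk_0}$ from the $A^V$--$A^V$ part together with $-2g^{ij}\langle\nabla^\perp_j V,\nabla^\perp_{k_0} V\rangle$. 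The $A^V$--$A^V$ contributions combine with coefficient $6$, and raising the remaining index by $g^{k_0 j_2}$ produces \eqref{eq:d2ginv}.

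There is no real obstacle: both computations are entirely algebraic, and the only care needed is in checking signs and correctly tracking contracted indices when combining the two $A^V$--$A^V$ contributions in Step 3. The flatness of $\R^N$ and the hypothesis $F_{ss}=0$ are what allow us to discard the would-be $\langle F_{iss},F_j\rangle$ terms, which is the single conceptual input beyond the first variation formulas already available in Proposition \ref{prop:1stvar}.
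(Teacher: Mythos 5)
Your proof is correct and follows essentially the same route the paper uses (and makes explicit in the analogous third-variation computation): differentiate $g_{ij}=\langle F_i,F_j\rangle$ directly, use flatness of $\R^N$ with $F_{ss}=0$ to kill the $\langle F_{iss},F_j\rangle$ terms, and then differentiate $g^{ij}g_{jk}=\delta^i_k$ and substitute the first-variation formulae from Proposition \ref{prop:1stvar}. The index bookkeeping checks out: the $8$ from $-2(g^{ij})_s(g_{jk})_s$ and the $-2$ from $-g^{ij}(g_{jk})_{ss}$ combine to the coefficient $6$ in \eqref{eq:d2ginv}, using symmetry of $A^V$ to align the contractions.
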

	
	\begin{corollary}
		\label{cor:d2phiN}
		Suppose that $\Sigma_0$ is a shrinker, that is, at $s=0$ we have $\phi \equiv 0$; then we have \begin{equation}
			\begin{split}
				\frac{1}{2}\phi_{ss} = \frac{1}{2}\mathcal{D}^2\mathcal{\varphi}(V,V) =&   A_{ij}A^V_{ik} A^V_{kj}  - A_{ij}\langle \nabla^\perp_i V, \nabla^\perp_j V\rangle \\&+ 2A^V_{ij} (\nabla^\perp\nabla^\perp V)_{ij} 
				+2\langle A_{ij}, \nabla_i^\perp V\rangle \nabla_j^\perp V 
				\\& - F_k \langle LV, \nabla^\perp_k V\rangle. 
			\end{split}
		\end{equation}
	\end{corollary}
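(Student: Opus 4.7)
The plan is to take a second $s$-derivative of the first variation formula
\[
\phi_s = LV - F_j g^{ij}\langle\nabla^\perp_{F_i} V, \phi\rangle
\]
from Proposition \ref{prop:1stvar} at $s = 0$. By reparametrising the variation, we may arrange $F_s = V = V^\perp$ and $F_{ss}|_{s=0} = 0$, consistent with the hypotheses of Lemma \ref{lem:d2Pi} and of the proposition computing $(g^{ij})_{ss}$.

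The correction term is straightforward: since $\phi \equiv 0$ at $s = 0$ by the shrinker hypothesis, differentiating the second summand on the right kills every product-rule contribution except the one that hits $\phi$ itself. Using $\phi_s = LV$ this produces $-F_k\langle LV, \nabla^\perp_k V\rangle$, which is precisely the last term of the claimed formula.

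The main calculation is therefore $(LV)_s$. Using the decomposition
\[
L = \Delta + \frac{1}{2} + \langle A_{kl},\cdot\rangle A_{kl} - \frac{1}{2}\nabla_{x^T},
\]
I would differentiate each piece via the product rule, drawing on $(g^{ij})_s = 2g^{ik}A^V_{km}g^{mj}$ from \eqref{eq:dg}, $\Pi_s$ from \eqref{eq:dPi}, and the standard first variations of the Christoffel symbols, the normal connection $\nabla^\perp$, and the second fundamental form $A_{ij}$, all of which follow from the formulas in Proposition \ref{prop:1stvar}. Organising the resulting terms pointwise in a normal frame, they should combine into precisely the four interior terms of the statement: a cubic contribution $A_{ij}A^V_{ik}A^V_{kj}$ from variations of $g^{ij}$ inside $\Delta V$ and inside the curvature coupling $\langle A_{kl},V\rangle A_{kl}$; a term $-A_{ij}\langle \nabla^\perp_i V,\nabla^\perp_j V\rangle$ from the variation of the Christoffel symbols (equivalently, the tangential part of $\nabla_{F_i}V$) in $\Delta V$; a term $2A^V_{ij}(\nabla^\perp\nabla^\perp V)_{ij}$ from the variation of $A_{ij}$ within the curvature coupling; and a term $2\langle A_{ij}, \nabla^\perp_i V\rangle \nabla^\perp_j V$ from the variation of the normal connection in $\Delta V$. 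The shrinker hypothesis $\mathbf{H} = \frac{1}{2}x^\perp$ is used throughout to absorb stray terms, in particular in handling the $\nabla_{x^T}V$ piece, where variations of $x^T$ recombine with contributions from $\Pi_s$ and rewrite as already-listed terms.

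The main obstacle is purely bookkeeping: tracking tangential versus normal components, and checking that coordinate-dependent intermediate expressions recombine into the manifestly tensorial quantities above. A careful pointwise calculation in a normal frame at a single point, exploiting the symmetry of $A_{ij}$ in its tangential indices, should resolve this and confirm the stated identity.
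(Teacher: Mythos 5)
Your route is genuinely different from the paper's. The paper obtains Corollary~\ref{cor:d2phiN} by differentiating $-\mathbf{H}=g^{ij}\Pi(F_{ij})$ and $x^\perp=\Pi(F)$ twice in the parametrisation $F=F_0+sV$ (so $F_{ss}\equiv 0$), feeding in the second-variation formulae for $g^{ij}$ and $\Pi$ from Lemma~\ref{lem:d2Pi} and the adjacent proposition and then simplifying via Lemma~\ref{lem:equiv-V}; this is the same scheme used for the third variation in Proposition~\ref{prop:d3phi}. You instead propose to differentiate the first-variation identity from Proposition~\ref{prop:1stvar} once more. That can be made to work, but the way you've set it up has two real gaps.

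First, the parametrisation you want is overdetermined. Equation~\eqref{eq:dphi} is valid when the variation field is normal, and to take an $s$-derivative of it you need it to hold on an interval, hence you need $F_s(s)$ normal to $\Sigma_s$ for all $s$ near $0$. But a straight-line path $F=F_0+sV$ has $F_s\equiv V$ tangential to $\Sigma_s$ for $s\neq0$, while a reparametrisation keeping $F_s(s)$ normal generically forces $F_{ss}(0)\neq 0$ (since $\langle V,\nabla_iV\rangle$ need not vanish). So ``$F_s=V^\perp$ for all $s$'' and ``$F_{ss}(0)=0$'' cannot both hold. You can still salvage the method: the unwanted $L\bigl(F_{ss}(0)\bigr)$ contribution cancels against the chain-rule correction $\mathcal{D}\mathcal{\varphi}\bigl(F_{ss}(0)\bigr)$ when relating $\phi_{ss}(0)$ in your parametrisation to $\mathcal{D}^2\mathcal{\varphi}(V,V)$; alternatively you can show that the additional tangential-variation terms in $\phi_s$, which vanish together with $\phi$ and $F_s^T$ at $s=0$, also have vanishing first $s$-derivative there. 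Either argument must appear; as written the proposal silently assumes something inconsistent.

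Second, your accounting of the tangential piece is off. You attribute the full $-F_k\langle LV,\nabla^\perp_kV\rangle$ to differentiating the $\phi$-correction in~\eqref{eq:dphi}, but since $LV(s)\in N\Sigma_s$ and the normal bundle rotates, $(LV)_s$ itself has a tangential component: from~\eqref{eq:dPi}, $\Pi_s(LV)=-F_jg^{ij}\langle\nabla^\perp_{F_i}V,LV\rangle$, so the tangential part of $(LV)_s(0)$ is another copy of $-F_k\langle LV,\nabla^\perp_kV\rangle$. The total tangential contribution to $\phi_{ss}$ is therefore $-2F_k\langle LV,\nabla^\perp_kV\rangle$, which after the overall factor $\tfrac12$ gives the single copy appearing in the corollary. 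Relatedly, the four interior normal terms you list are the coefficients of $\tfrac12\phi_{ss}$, so the normal part of $(LV)_s$ must come out to twice that; without this factor the bookkeeping will not close. These are fixable, but as written the proposal would not reproduce the stated formula.
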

	
	Later, we will need $\mathcal{D}^2\mathcal{\varphi}$ as a bilinear form; by polarisation it follows that:
	
	\begin{corollary}
		\label{cor:d2phiN-pol}
		Suppose that $\Sigma_0$ is a shrinker, then we have \begin{equation}
			\begin{split}
				\frac{1}{2}\mathcal{D}^2\mathcal{\varphi}(V,W) =&   A_{ij}A^V_{ik} A^W_{kj}  - A_{ij}\langle \nabla^\perp_i V, \nabla^\perp_j W\rangle \\&+ A^V_{ij} (\nabla^\perp\nabla^\perp W)_{ij}  + A^W_{ij} (\nabla^\perp\nabla^\perp V)_{ij}  \\&+\langle A_{ij}, \nabla_i^\perp V\rangle \nabla_j^\perp W+\langle A_{ij}, \nabla_i^\perp W\rangle \nabla_j^\perp V 
				\\& - \frac{1}{2}F_k (\langle LV, \nabla^\perp_k W\rangle+\langle LW, \nabla^\perp_k V\rangle). 
			\end{split}
		\end{equation}
	\end{corollary}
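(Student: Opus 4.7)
The plan is to derive Corollary \ref{cor:d2phiN-pol} directly from Corollary \ref{cor:d2phiN} by polarization, since $\mathcal{D}^2\mathcal{\varphi}$ is symmetric bilinear by construction. Specifically I would apply the polarization identity
\[
\mathcal{D}^2\mathcal{\varphi}(V,W)=\tfrac{1}{2}\bigl[\mathcal{D}^2\mathcal{\varphi}(V+W,V+W)-\mathcal{D}^2\mathcal{\varphi}(V,V)-\mathcal{D}^2\mathcal{\varphi}(W,W)\bigr]
\]
to the expression in Corollary \ref{cor:d2phiN}, then simplify term-by-term. Concretely, I would substitute $V\mapsto V+W$ on the right-hand side of that corollary, expand each quadratic-in-$V$ summand, and collect the mixed $V$-$W$ contributions.

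For most summands this is purely mechanical. For instance, $-A_{ij}\langle \nabla^\perp_i V,\nabla^\perp_j V\rangle$ produces the mixed term $-A_{ij}\bigl(\langle \nabla^\perp_i V,\nabla^\perp_j W\rangle+\langle \nabla^\perp_i W,\nabla^\perp_j V\rangle\bigr)$, which collapses to $-2A_{ij}\langle \nabla^\perp_i V,\nabla^\perp_j W\rangle$ by the symmetry $A_{ij}=A_{ji}$ and relabelling. After dividing by $2$ per the polarization identity, this yields exactly the corresponding term in the statement. The summands $2A^V_{ij}(\nabla^\perp\nabla^\perp V)_{ij}$, $2\langle A_{ij},\nabla^\perp_i V\rangle \nabla^\perp_j V$, and $-F_k\langle LV,\nabla^\perp_k V\rangle$ each split into the two visibly symmetric cross-terms appearing in the conclusion.

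The only term warranting slight care is $A_{ij}A^V_{ik}A^V_{kj}$, whose polarization naively produces $\tfrac{1}{2}\bigl(A_{ij}A^V_{ik}A^W_{kj}+A_{ij}A^W_{ik}A^V_{kj}\bigr)$, and one must see that these two cross-terms coincide. Viewing $A^V,A^W$ as symmetric scalar matrices and $A$ as a normal-vector-valued symmetric matrix, the contractions equal $\langle A^VA^W,A\rangle_F$ and $\langle A^WA^V,A\rangle_F$ respectively; since $A$ is symmetric, only the symmetric part of the matrix product $A^V A^W$ contributes, and $\tfrac{1}{2}(A^VA^W+A^WA^V)$ is the same symmetric matrix in both cases. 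Thus both cross-terms equal $A_{ij}A^V_{ik}A^W_{kj}$ and combine to give the claimed coefficient.

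I do not anticipate any serious obstacle: once Corollary \ref{cor:d2phiN} is granted, the statement is a routine application of polarization together with the symmetries of $A$ and of the metric contractions, so the proof reduces to careful bookkeeping of indices.
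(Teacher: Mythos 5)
Your proposal is correct and matches the paper's own derivation: the paper explicitly obtains Corollary \ref{cor:d2phiN-pol} from Corollary \ref{cor:d2phiN} ``by polarisation.'' Your verification that the two cross-terms $A_{ij}A^V_{ik}A^W_{kj}$ and $A_{ij}A^W_{ik}A^V_{kj}$ coincide (via the symmetry of $A$, $A^V$, $A^W$, or equivalently by relabelling $i\leftrightarrow j$) is exactly the small point that needs checking, and the remaining terms split symmetrically as you say.
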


	\subsection{Third variation}
	
	\begin{lemma} At $s=0$, suppose $F^T_s = F^T_{ss}=F^T_{sss}=0$, then the third variation $\Pi_{sss}$ acts as 
		\begin{equation}
			\begin{split}
				\Pi_{sss}(W^T) =& -\nabla^\perp_{W^T} F_{sss} + 3\nabla^\perp_{\nabla^T_{W^T} F_{ss}} F_s + 3 F_jg^{ij}\langle \nabla^\perp_i F_s,\nabla^\perp_{W^T} F_{ss}\rangle + 3\nabla^\perp_{\nabla^T_{W^T}F_s} F_{ss} \\&- 6\nabla^\perp_{\nabla^T_{\nabla^T_{W^T}F_s}F_s}F_s 
				- 6 F_j g^{ij} \langle \nabla^\perp_i F_s, \nabla^\perp_{\nabla^T_{W^T}F_s}F_s \rangle
				\\&   -6g^{ij} F_j \langle \nabla^\perp_{W^T}F_s,  \nabla^\perp_{\nabla^T_i F_s} F_s\rangle + 6g^{ij} \langle \nabla^\perp_{W^T} F_s, \nabla^\perp_i F_s\rangle \nabla^\perp_j F_s \\& + 6g^{ij}F_j \langle \nabla^\perp_{W^T}F_s, \nabla^\perp_i F_{ss}\rangle,
			\end{split}
		\end{equation}

		\begin{equation}
			\begin{split}
				\langle F_i, \Pi_{sss} \Pi(W)\rangle =& -\langle W,\nabla^\perp_i F_{sss}\rangle +3\langle W, \nabla^\perp_{\nabla^T_{i}F_{ss}}F_s\rangle - 6\langle W, \nabla^\perp_{\nabla^T_{\nabla^T_i F_s}F_s}F_s\rangle
				\\& + 6 g^{jk} \langle \nabla^\perp_i F_s, \nabla^\perp_j F_s\rangle \langle \nabla^\perp_k F_s, W\rangle , 
			\end{split}
		\end{equation}

		\begin{equation}
			\begin{split}
				\Pi\Pi_{sss}\Pi(W) ={}& 3g^{ij}\langle W,\nabla^\perp_i F_s\rangle (-\nabla^\perp_j F_{ss} + 2\nabla^\perp_{\nabla^T_j F_s}F_s) \\&+ 3g^{ij}(2\langle W,\nabla^\perp_{\nabla^T_i F_s}F_s\rangle -\langle W,\nabla^\perp_i F_{ss}\rangle) \nabla^\perp_j F_s. 
			\end{split}
		\end{equation}
	\end{lemma}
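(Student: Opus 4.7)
The plan is to differentiate each of the three formulas of Lemma \ref{lem:d2Pi} once more in $s$ and simplify at $s = 0$ using the normal gauge $F^T_s = F^T_{ss} = F^T_{sss} = 0$. This gauge, together with Proposition \ref{prop:1stvar}, permits us to replace every ambient derivative $\nabla_X F_s$ by its normal projection $\nabla^\perp_X F_s$ up to a controlled tangential correction from the shape operator, and makes the variations of the metric, its inverse and the frame $F_j$ readable off \eqref{eq:dPi}--\eqref{eq:dphi}.

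For the first formula I would apply $\partial_s$ to each of the three terms of \eqref{eq:d2PiTan}. Differentiating $-\Pi(\nabla_{W^T}F_{ss})$ yields the leading term $-\nabla^\perp_{W^T}F_{sss}$ along with a contribution from $\Pi_s$ acting on $\nabla_{W^T}F_{ss}$, which by \eqref{eq:dPi} is absorbed into the $F_j g^{ij}$-prefactored terms. Differentiating $2\Pi(\nabla_{\nabla^T_{W^T}F_s}F_s)$ requires the product rule in three places (the inner $F_s$, the outer $F_s$, and the tangent-projection operator $\nabla^T_{W^T}$); using $\partial_s F_s|_{s=0} = F_{ss}$ together with the gauge, these combine into the three terms carrying coefficient $3$ and the one carrying $-6$. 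The third term of \eqref{eq:d2PiTan} supplies the remaining $F_j g^{ij}$-prefactored expressions when its $F_j$ and $g^{ij}$ are differentiated via \eqref{eq:dPi} and \eqref{eq:dg}.

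For the second formula I would differentiate \eqref{eq:d2PiNT}, keeping in mind that $F_i$ itself depends on $s$; the variation of $F_i$ contracts against $\Pi_{ss}\Pi(W)$, and via \eqref{eq:d2PiNN} produces the triple inner-product term involving $g^{jk}\langle\nabla^\perp_i F_s,\nabla^\perp_j F_s\rangle\langle\nabla^\perp_k F_s,W\rangle$. The other terms arise directly from the product rule applied to the right-hand side of \eqref{eq:d2PiNT}. For the third formula $\Pi\Pi_{sss}\Pi(W)$, the efficient route is algebraic: three successive differentiations of the idempotent identity $\Pi^2 = \Pi$, followed by multiplication by $\Pi$ on the left and right, give
\[
\Pi\Pi_{sss}\Pi = -3\Pi\Pi_{ss}\Pi_s\Pi - 3\Pi\Pi_s\Pi_{ss}\Pi.
\]
Substituting \eqref{eq:dPi}, \eqref{eq:d2PiNT} and \eqref{eq:d2PiNN} then produces the stated expression, once the tangential pieces of $\Pi_s$ are paired with the second-variation ingredients.

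The main obstacle will be bookkeeping. The combinatorial factors $3$ and $6$ emerge only after symmetrising all product-rule contributions and collecting like terms by their homogeneity in $F_s$, $F_{ss}$ and $F_{sss}$. A secondary difficulty is handling the tangential residues produced by derivatives of the form $\nabla_X F_s$, which must be converted into the covariant object $\nabla^T_X F_s$ appearing in the statement by using the gauge and the standard shape-operator identity for the tangent part of $\nabla_X V$ when $V$ is normal. Once these organisational steps are completed, the flatness of $\R^N$ ensures that no curvature corrections remain and the stated formulas follow.
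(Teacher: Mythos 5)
Your plan for the third formula matches the paper exactly: differentiate $\Pi^2 = \Pi$ three times, multiply by $\Pi$ on each side to isolate $\Pi\Pi_{sss}\Pi = -3(\Pi\Pi_{ss}\Pi_s\Pi + \Pi\Pi_s\Pi_{ss}\Pi)$, then substitute the first- and second-variation formulas for $\Pi$. That part is correct.

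For the first two formulas, however, your plan has a genuine gap. You propose to apply $\partial_s$ directly to \eqref{eq:d2PiTan} and \eqref{eq:d2PiNT}. But Lemma \ref{lem:d2Pi} is a \emph{gauge-fixed} statement: it asserts those identities only at $s=0$, under the hypothesis $F^T_s = F^T_{ss}=0$ at $s=0$. The formulas do not hold as identities in $s$ (the gauge cannot in general be maintained for $s \neq 0$, since $F^T_s\equiv 0$ already forces a nonzero tangential part of $F_{ss}$ away from $s=0$), so one cannot take another $s$-derivative of them. To make your route rigorous you would first need a non-gauge-fixed version of $\Pi_{ss}(W^T)$ and $\langle F_i,\Pi_{ss}\Pi(W)\rangle$ valid in a neighborhood of $s=0$, which the paper never writes down. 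The paper instead differentiates the \emph{exact} relation $\Pi(F_i)=0$ (valid for all $s$) three times, obtaining $\Pi_{sss}(F_i) + 3\Pi_{ss}(F_{si}) + 3\Pi_s(F_{ssi}) + \Pi(F_{sssi})=0$, and only then evaluates at $s=0$ using \eqref{eq:dPi} and Lemma \ref{lem:d2Pi}; this gives the first displayed formula since $W^T$ is a combination of the $F_i$. For the second formula, the paper does not differentiate \eqref{eq:d2PiNT} at all: it uses the self-adjointness of $\Pi$, hence of $\Pi_{sss}$, to write $\langle F_i,\Pi_{sss}\Pi(W)\rangle = \langle \Pi_{sss}(F_i),\Pi(W)\rangle$ and then invokes the already-established first formula with $W^T=F_i$. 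You should replace your differentiation-of-the-second-variation step by one of these exact identities that hold for all $s$.
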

	\begin{proof}
		Differentiating $\Pi^2=\Pi$ thrice, we have $\Pi_{sss} \Pi + 3\Pi_{ss} \Pi_s + 3\Pi_s \Pi_{ss} + \Pi \Pi_{sss}= \Pi_{sss}$ and hence $\Pi \Pi_{sss} \Pi = -3(\Pi_{ss} \Pi_s \Pi + \Pi_s \Pi_{ss}\Pi)$. 
		
		Similarly, differentiate $\Pi(X_i) =0$ thrice to get $\Pi_{sss}(F_i) + 3\Pi_s(F_{si}) + 3\Pi(F_{ssi}) + \Pi(F_{sssi})=0$ Finally, since $\Pi$ is a symmetric operator, so too are its derivatives, so for any $i$ we have $\langle F_i , \Pi_{sss}\Pi(W) \rangle = \langle \Pi_{sss}(F_i), \Pi(W)\rangle$. Substituting (\ref{eq:dPi}) and Lemma \ref{lem:d2Pi} into these relations gives the formulae above. \end{proof}
	
	\begin{lemma} 
		\label{lem:equiv-V}
		Let $V$ be a normal vector field on $\Sigma$, then 
		\begin{alignat*}{3}
		&\nabla^T_j V &&= -F_l g^{lk} \langle A_{jk}, V\rangle\\
		&\nabla^T_i\nabla^\perp_j V &&= -F_l g^{lk} \langle A_{ik}, \nabla^\perp_j V\rangle,\\
	&	\nabla_i\nabla^T_j  V&&=  -F_{il} g^{lk} \langle A_{jk}, V\rangle -  F_l g^{lk} \langle A_{jk}, \nabla^\perp_iV\rangle  -F_l g^{lk} \langle \nabla_i A_{jk}, V\rangle,\\
	&	\nabla_{\nabla^T_i V}V &&= -g^{lk}\langle A_{ik},V\rangle \nabla_l V,\\
	&	\nabla^T_{\nabla^T_i V}V &&= g^{lk}g^{mj} \langle A_{ik},V\rangle\langle A_{lm},V\rangle F_j,\\
	&	\nabla_{\nabla^T_{\nabla^T_iV}V}V &&= g^{lk}g^{mj} \langle A_{ik},V\rangle\langle A_{lm},V\rangle \nabla_j V.
		\end{alignat*}
	\end{lemma}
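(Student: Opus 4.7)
The plan is to obtain all six formulas from the first, which is the classical Weingarten identity. For (1), the starting point is the orthogonality $\langle V, F_k\rangle = 0$; differentiating along $F_j$ gives $\langle \nabla_{F_j} V, F_k\rangle = -\langle V, \nabla_{F_j} F_k\rangle$. Since both sides of (1) are tensorial, I would evaluate in Riemannian normal coordinates on $\Sigma$ at an arbitrary point, where the Christoffel symbols vanish so that $\nabla_{F_j} F_k = F_{jk} = A_{jk}$; then $\langle \nabla^T_j V, F_k\rangle = -A^V_{jk}$, and raising the index via $g^{lk}$ yields (1). Identity (2) is then immediate, since $\nabla^\perp_j V$ is itself a normal vector field on $\Sigma$, so (1) applied with $\nabla^\perp_j V$ in place of $V$ produces (2).

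For (3), the plan is to apply $\nabla_{F_i}$ to both sides of (1). In normal coordinates at the base point $\partial_i g^{lk} = 0$, so only three contributions remain. Differentiating $F_l$ produces $F_{il}$, differentiating $A_{jk}$ produces the term involving $\nabla_i A_{jk}$ (the ambient partial derivative agrees with $\nabla_i A_{jk}$ modulo tangent components at the centre of normal coordinates, and the tangent part drops out upon pairing with the normal field $V$), and differentiating $V$ gives $\nabla_i V = \nabla^T_i V + \nabla^\perp_i V$, of which only $\nabla^\perp_i V$ survives the pairing with the normal vector $A_{jk}$. Collecting these three contributions gives (3).

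The last three formulas are purely algebraic consequences of (1). Writing $\nabla^T_i V = c^l F_l$ with $c^l = -g^{lk}\langle A_{ik}, V\rangle$, linearity of the ambient Euclidean connection in the direction argument gives $\nabla_{\nabla^T_i V} V = c^l \nabla_{F_l} V$, which is (4). Taking the tangent component of (4) and applying (1) once more, now with $l$ in place of $j$, gives (5); finally, treating the tangent vector produced in (5) as $\tilde c^j F_j$ and applying linearity of $\nabla$ one more time yields (6). The whole argument is essentially index bookkeeping: the only point of care is the consistent use of the normal-coordinate simplification to eliminate the Christoffel and $\partial g^{lk}$ terms in (3), together with strict adherence to the sign conventions $A(X,Y) = \nabla^\perp_X Y$ and $\mathbf{H} = -\operatorname{tr} A$ fixed in Section \ref{sec:prelim}. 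I do not expect any substantive obstacle.
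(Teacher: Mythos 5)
Your proposal is correct and takes essentially the same route as the paper: both start from the orthogonality relation $\langle V, F_k\rangle = 0$, giving the key identity $\langle \nabla_i V, F_j\rangle = -\langle V, A_{ij}\rangle$ from which (1) and (2) follow at once, then obtain (3) by differentiating (with normal coordinates handling the $\partial_i g^{lk}$ term exactly as you note), and deduce (4)--(6) by substituting the earlier formulas. The only stylistic difference is that the paper condenses this to a one-line sketch while you spell out the index manipulations.
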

	\begin{proof}
		The key is to notice that
		\[\langle \nabla_i V,F_j\rangle=-\langle V,\nabla_i F_j\rangle=-\langle V,A_{ij}\rangle.\]
		This immediately gives the first two formulae. The third follows from differentiating the above, and the remainder follows after substituting the first two. 
	\end{proof}
	
	\begin{proposition}
		At $s=0$, assume that $F_s=V=V^\perp$ and $F_{ss}=F_{sss}=0$; then we have
		
		\begin{alignat*}{3} &(g_{ij})_{sss} &&= 0,\\
 &(g^{ik})_{sss} &&=24 A^V_{ij} A^V_{jl} A^V_{lk} - 12 A^V_{jk} \langle \nabla^\perp_i V, \nabla^\perp_j V\rangle - 12 A^V_{ij}\langle \nabla^\perp_j V, \nabla^\perp_k V\rangle.\
 \end{alignat*}
	\end{proposition}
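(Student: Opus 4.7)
The plan has two parts: first, a direct argument for $(g_{ij})_{sss}=0$; then, differentiation of the identity $g^{ij}g_{jk}=\delta^i_k$ three times in $s$, followed by substitution of the already-computed first and second variations.

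For the first identity, since $g_{ij}=\langle F_i,F_j\rangle$, the trilinear Leibniz rule gives
\[(g_{ij})_{sss}=\langle F_{sssi},F_j\rangle+3\langle F_{ssi},F_{sj}\rangle+3\langle F_{si},F_{ssj}\rangle+\langle F_i,F_{sssj}\rangle.\]
The $s$-derivative commutes with $\partial_i$, so $F_{ssi}=\partial_i F_{ss}$ and $F_{sssi}=\partial_i F_{sss}$. The hypotheses $F_{ss}|_{s=0}\equiv 0$ and $F_{sss}|_{s=0}\equiv 0$ identically on $\Sigma$ imply that all their tangential derivatives vanish at $s=0$, so every term above is zero.

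For the second identity, differentiate $g^{ij}g_{jk}=\delta^i_k$ three times. Using the first part to eliminate the $g^{ij}(g_{jk})_{sss}$ term and then multiplying through by $g^{kl}$ yields
\[(g^{il})_{sss}=-3(g^{ij})_{ss}(g_{jk})_s\,g^{kl}-3(g^{ij})_s(g_{jk})_{ss}\,g^{kl}.\]
Now substitute the formulae \eqref{eq:dg}, \eqref{eq:d2g} and \eqref{eq:d2ginv} into the right-hand side.

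The resulting expression splits cleanly into cubic-in-$A^V$ pieces and mixed pieces involving $\langle\nabla^\perp V,\nabla^\perp V\rangle$. The two cubic contributions combine with coefficients $+36$ (from the $(g^{ij})_{ss}(g_{jk})_s$ block) and $-12$ (from the $(g^{ij})_s(g_{jk})_{ss}$ block), producing the $24 A^V_{ij}A^V_{jl}A^V_{lk}$ term; the two mixed contributions each produce a $-12$ term, with free indices placed so as to match the stated formula. The only real obstacle is the index bookkeeping in this final substitution; rewriting everything in matrix form (treating $A^V$ and $\langle\nabla^\perp V,\nabla^\perp V\rangle$ as $(1,1)$-tensors via raising through $g^{-1}$) reduces the calculation to a short product and the cancellation $36-12=24$.
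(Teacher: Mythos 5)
Your proposal is correct and follows essentially the same approach as the paper: differentiate $g_{ij}=\langle F_i,F_j\rangle$ thrice and use $F_{ss}=F_{sss}=0$ to obtain the first identity, then differentiate $g^{ik}g_{kj}=\delta^i_j$ thrice and substitute \eqref{eq:dg}, \eqref{eq:d2g}, \eqref{eq:d2ginv}. Your coefficient bookkeeping ($36-12=24$ for the cubic block, $-12$ for each mixed block) also checks out against the paper's convention that repeated lower indices are contracted with the inverse metric.
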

	\begin{proof}
		Differentiate $g_{ij} = \langle F_i, F_j\rangle $ thrice and use that $F_{ss}=F_{sss}=0$. Then differentiate $g^{ik} g_{kj} = \delta^i_j$ thrice to find $(g^{ik})_{sss} g_{kj} = -3(g^{ij})_{ss} (g_{jk})_s -3 (g^{ij})_s (g_{jk})_{ss}$ and use the previous variation formulae (\ref{eq:dg}), (\ref{eq:d2g}) and (\ref{eq:d2ginv}). 
	\end{proof}

	\begin{proposition}
		\label{prop:d3phi}
		At $s=0$, assume that $F_s=V=V^\perp$ and $F_{ss}=F_{sss}=0$; then we have
		
		\begin{equation}
			\begin{split}
				\phi_{sss} =\mathcal{D}^3 \mathcal{\varphi}(V,V,V)=&  24 A^V_{ij} A^V_{jl} A^V_{lk} A_{ik} - 24 A_{ik} A^V_{jk} \langle \nabla^\perp_i V,\nabla^\perp_j V\rangle 
				\\&+ 6(3A^V_{ij}A^V_{jk} - \langle \nabla^\perp_i V,\nabla^\perp_k V\rangle)( (\nabla^\perp\nabla^\perp V)_{ik} - A_{il} A^V_{lk}) 
				\\&+12 A^V_{ik}  (\langle A_{ij}, \nabla^\perp_k V\rangle + \langle A_{jk}, \nabla^\perp_i V\rangle + \langle (\nabla A)_{jk,i},V\rangle) \nabla^\perp_j V 
				\\& -12\langle A_{ij},\nabla^\perp_i V\rangle A^V_{jk} \nabla^\perp_k V
				\\&- 12\langle \phi, \nabla_i^\perp V\rangle A^V_{ij} \nabla^\perp_j V -6 \langle \nabla^\perp_j \phi,V\rangle  A^V_{jk} \nabla^\perp_k V
				\\& -6\langle LV, \nabla^\perp_j V\rangle \nabla^\perp_j V  
				\\&  -3 F_j A^V_{ij} A^V_{ik} \langle \nabla_k^\perp V, \phi\rangle \\& +3F_j \langle \nabla^\perp_j V, \nabla^\perp_k V\rangle \langle \nabla^\perp_k V, \phi\rangle -6 F_j \langle \nabla^\perp_j V,\nabla^\perp_k V \rangle \langle \nabla_k^\perp \phi, V\rangle
				\\& - 6 F_j A^V_{jk} \langle LV, \nabla_k^\perp V\rangle -3 F_j \langle \mathcal{D}^2\mathcal{\varphi}(V,V) , \nabla^\perp_j V\rangle
			\end{split}
		\end{equation}

	\end{proposition}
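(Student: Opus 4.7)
The proof is a direct computation, continuing the pattern of the first and second variation formulas already derived in Section \ref{sec:variation}. The plan is as follows.

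First, I would write $\phi = \frac{1}{2}\Pi(F) + g^{ij}\Pi(F_{ij})$, using that $\mathbf{H} = -g^{ij}\Pi(F_{ij})$. Three applications of the Leibniz rule in $s$ express $\phi_{sss}$ as a sum of terms built from $\Pi, \Pi_s, \Pi_{ss}, \Pi_{sss}$, the $s$-derivatives of $g^{ij}$, and the spatial derivatives of $F, F_s, F_{ss}, F_{sss}$. The constraints $F_s = V = V^\perp$ and $F_{ss} = F_{sss} = 0$ at $s=0$ also imply $F_{ssi} = F_{sssi} = 0$ there (since coordinate and $s$-derivatives commute), eliminating many Leibniz terms.

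Next, I would substitute the previously derived formulas for $\Pi_s$ (from Proposition \ref{prop:1stvar}), $\Pi_{ss}$ (Lemma \ref{lem:d2Pi}), and $\Pi_{sss}$ (the lemma immediately preceding this proposition), together with the formulas for $(g^{ij})_s, (g^{ij})_{ss}, (g^{ij})_{sss}$ already derived. The resulting expression naturally features tangential covariant derivatives such as $\nabla^T V$, $\nabla^T\nabla^T V$, and the iterated $\nabla^T_{\nabla^T_i V} V$, which Lemma \ref{lem:equiv-V} converts into contractions of $A$, $A^V$, $\nabla^\perp V$, and the frame vectors $F_j$ with powers of $g^{ij}$.

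Finally, I would collect and simplify. The $\phi$-dependent terms in the statement all originate from the correction $-F_j g^{ij}\langle \nabla^\perp_i V, \phi\rangle$ appearing in the first variation formula \eqref{eq:dphi}. Differentiating this correction twice and reorganising produces the contributions involving $\phi$, $\nabla^\perp\phi$, $LV$, and $\mathcal{D}^2\mathcal{\varphi}(V,V)$ in the last lines of the target formula, where $\mathcal{D}^2\mathcal{\varphi}(V,V)$ is to be read as the fixed expression from Corollary \ref{cor:d2phiN}.

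The main obstacle is purely organisational: the Leibniz expansion produces dozens of terms, and matching them against the stated formula requires careful use of the symmetries of $A^V_{ij}$, $g^{ij}$, and $\langle \nabla^\perp_i V, \nabla^\perp_j V\rangle$, together with repeated invocation of Lemma \ref{lem:equiv-V} to rewrite tangential derivatives in terms of normal ones. No new geometric ideas beyond those already in Section \ref{sec:variation} are required.
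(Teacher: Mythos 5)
Your overall computational strategy coincides with the paper's: differentiate $\phi = \tfrac{1}{2}\Pi(F) + g^{ij}\Pi(F_{ij})$ thrice in $s$, substitute the variation formulae for $\Pi_s, \Pi_{ss}, \Pi_{sss}$ and $(g^{ij})_s, (g^{ij})_{ss}, (g^{ij})_{sss}$, and convert tangential derivatives into $A$, $A^V$ and $\nabla^\perp V$ via Lemma \ref{lem:equiv-V}. However, your attribution of the $\phi$-dependent terms to the tangential correction $-F_j g^{ij}\langle\nabla^\perp_{F_i} V,\phi\rangle$ from the first variation formula \eqref{eq:dphi} is misleading: that correction is purely tangential, so at most it can account for the $\phi$-terms carrying a factor of $F_j$. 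The normal-part $\phi$-terms, for instance $-12\langle\phi,\nabla^\perp_i V\rangle A^V_{ij}\nabla^\perp_j V$ and $-6\langle\nabla^\perp_j\phi,V\rangle A^V_{jk}\nabla^\perp_k V$, are produced by a different mechanism. The paper computes the normal parts of $-\mathbf{H}_{sss}$ and $(x^\perp)_{sss}$ separately, giving raw combinations of $\mathbf{H}$, $x$, and their derivatives, and only at the end regroups these into $\phi$, $LV$, and $\mathcal{D}^2\mathcal{\varphi}(V,V)$ using the definitions of those operators, the identity $\nabla^\perp_k\phi = -\tfrac{1}{2}\langle x,F_j\rangle A_{jk} - \nabla^\perp_k\mathbf{H}$, and the Codazzi equation. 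That regrouping (and the Codazzi simplification it relies on, which your sketch omits) is essential to producing the stated form and is not traceable to a single source term; if you carried out the same recollection at the end the computation would close up, but as written the proposal points at the wrong source for those terms.
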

	\begin{proof}
		Differentiating $-\mathbf{H} = g^{ij} \Pi(F_{ij})$ thrice and using that $F_{ss}=F_{sss}=0$ we have \begin{equation}\begin{split}-\mathbf{H}_{sss} ={}& (g^{ij})_{sss} \Pi(F_{ij}) +3 (g^{ij})_{ss} \Pi_s (F_{ij}) + 3(g^{ij})_s \Pi_{ss}(F_{ij})+ g^{ij} \Pi_{sss}(F_{ij}) \\& + 3(g^{ij})_{ss} \Pi(V_{ij}) + 6(g^{ij})_s \Pi_s (V_{ij}) + 3g^{ij} \Pi_{ss} (V_{ij}) .\end{split}\end{equation}
		
		Substituting the variation formulae for $g, \Pi$ above, we then have
		
		\begin{equation}
			\begin{split}
				-\mathbf{H}_{sss} ={}& 24 A^V_{ij} A^V_{jl} A^V_{lk} A_{ik} - 24 A_{ik} A^V_{jk} \langle \nabla^\perp_i V,\nabla^\perp_j V\rangle \\&+ 3(6A^V_{ij}A^V_{jk} - 2\langle \nabla^\perp_i V,\nabla^\perp_k V\rangle)(-\nabla^\perp_{F^T_{ik}}V - F_l g^{ml}\langle \nabla^\perp_m V, F_{ik}\rangle + \nabla^\perp_i\nabla^\perp_k V - A_{il}g^{ml} A^V_{mk}) \\&+12 A^V_{ik} \left(-\nabla^\perp_{\nabla^T_i\nabla_k V} V - F_l g^{jl} \langle \nabla^\perp_j V, \nabla^\perp_i \nabla_k V\rangle\right) \\&+ 6g^{ik}g^{jl}\left(-F_l\langle \nabla^\perp_j V,\nabla^\perp_{\nabla^T_{F^T_{ik}}V}V\rangle - F_l \langle \nabla^\perp_{\nabla^T_{\nabla^T_j V}V}V,F_{ik}\rangle \right)
				\\& +6g^{ik}g^{jl}\left(\langle \nabla^\perp_j V,F_{ik}\rangle \nabla^\perp_{\nabla^T_lV}V +\langle \nabla^\perp_{\nabla^T_l V}V, F_{ik}\rangle \nabla^\perp_j V\right)
				\\& +6g^{ik}g^{jl} g^{mq} F_l \langle F_{ik}, \nabla^\perp_m \rangle\langle \nabla_j^\perp V, \nabla^\perp_q V\rangle
				\\&+12A^V_{ik} \left(F_l g^{jl}\langle \nabla^\perp_j V, \nabla^\perp_{F^T_{ik}}V\rangle + F_l g^{jl} \langle A_{ik,} \nabla^\perp_{\nabla^T_j V}V\rangle - g^{jl}\langle A_{ik},\nabla^\perp_j V\rangle \nabla^\perp_l V\right)
				\\& - 6g^{ik}g^{jl} ( \langle A_{ij},\nabla^\perp_k V\rangle +\langle A_{jk},\nabla^\perp_i V\rangle + \langle \nabla_i A_{jk},V\rangle)\left(\nabla^\perp_{\nabla^T_l V}V + F_mg^{mq} \langle \nabla^\perp_q V, \nabla^\perp_lV\rangle\right)
				\\& + 6g^{ik}g^{jl} \left(-F_l \langle \nabla^\perp_{\nabla^T_jV}V,A_{im}\rangle g^{mq}A^V_{kq} + F_l \langle \nabla^\perp_{\nabla^T_j V} V,\nabla^\perp_i \nabla^\perp_k V\rangle\right)
				\\& + 6g^{ik}g^{jl} \left(-\langle \nabla^\perp_i\nabla^\perp_k V, \nabla^\perp_j V\rangle \nabla^\perp_l V + \langle A_{im}\nabla^\perp_j V\rangle \nabla^\perp_l V g^{mq} A^V_{kq}\right).
			\end{split}
		\end{equation}
		
		Using Lemma \ref{lem:equiv-V} to simplify the derivatives of $V$, collecting terms and using the Codazzi equation then gives
		
		\begin{equation}
			\begin{split}
				-\mathbf{H}_{sss} ={}& 24 A^V_{ij} A^V_{jl} A^V_{lk} A_{ik} - 24 A_{ik} A^V_{jk} \langle \nabla^\perp_i V,\nabla^\perp_j V\rangle 
				\\&+ 6(3A^V_{ij}A^V_{jk} - \langle \nabla^\perp_i V,\nabla^\perp_k V\rangle)( (\nabla^\perp\nabla^\perp V)_{ik} - A_{il} A^V_{lk}) 
				\\&+12 A^V_{ik}  (\langle A_{ij}, \nabla^\perp_k V\rangle + \langle A_{jk}, \nabla^\perp_i V\rangle + \langle (\nabla A)_{jk,i},V\rangle) \nabla^\perp_j V 
				\\&+ 12\langle H, \nabla_i^\perp V\rangle A^V_{ij} \nabla^\perp_j V
				\\&-6A^V_{ik} \langle A_{ik},\nabla^\perp_j V\rangle \nabla^\perp_j V
				\\& -6 ( 2\langle A_{ij},\nabla^\perp_i V\rangle -\langle \nabla^\perp_j H,V\rangle ) A^V_{jk} \nabla^\perp_k V
				\\& -6\langle \Lap^\perp V, \nabla^\perp_j V\rangle \nabla^\perp_j V 
				\\& -6F_j A^V_{il} A^V_{lk} \langle \nabla^\perp_j V , A_{ik} \rangle + 6 F_j \langle \nabla^\perp_j V, A_{ik}\rangle\langle \nabla^\perp_i V, \nabla^\perp_k V\rangle 
				\\& -12 F_j A^V_{ik} \langle (\nabla^\perp \nabla^\perp V)_{ik}, \nabla^\perp_j V\rangle
				\\& +6 F_j A^V_{ij} A^V_{ik} \langle \nabla^\perp_k V ,H\rangle  -6 F_j \langle \nabla^\perp_j V, \nabla^\perp_l V\rangle \langle \nabla^\perp_l V, H\rangle
				\\& -12F_j \langle \nabla^\perp_i V, A_{ik}\rangle \langle \nabla^\perp_j V,\nabla^\perp_k V\rangle + 12 F_j \nabla^\perp_j V,\nabla^\perp_k V\rangle \langle \nabla^\perp_k H, V\rangle 
				\\& -6 F_j A^V_{jl} A^V_{ik} \langle \nabla^\perp_l V , A_{ik}\rangle -6 F_j A^V_{jl} \langle \nabla^\perp_l V, \Lap^\perp V\rangle.
			\end{split}
		\end{equation}
		
		Similarly, we may compute $(x^\perp)_{sss} = (\Pi(F))_{sss}$, 
		
		\begin{equation}
			\begin{split}
				(x^\perp)_{sss} ={}& -6\langle V,\nabla^\perp_iV\rangle\nabla^\perp_i V - 6 \langle x, F_i\rangle \langle A_{ij},V\rangle\langle A_{jk},V\rangle\nabla^\perp_k V \\&- 6  \langle x^\perp,\nabla^\perp_i V\rangle \langle A_{ij},V\rangle\nabla^\perp_j V- 6\langle x^\perp,\nabla^\perp_j V\rangle \langle A_{ij},V\rangle \nabla^\perp_i V
				\\& +6 \langle \nabla^\perp_j V,\nabla^\perp_i V\rangle\langle x,F_j\rangle\nabla^\perp_i V
				\\& 
				+6 g^{ij}\langle x,F_l\rangle\langle \nabla^\perp_l V,\nabla^\perp_i V\rangle\langle A_{kj}, V\rangle F_k
				-6g^{ij}\langle x,\nabla^\perp_l V\rangle \langle A_{il},V\rangle \langle A_{kj},V\rangle F_k
				\\&
				+6\langle x,F_i\rangle \langle \nabla^\perp_j V,\nabla^\perp_k V\rangle \langle A_{ij},V\rangle F_k
				+6g^{ij}\langle x,\nabla^\perp_i V\rangle \langle \nabla^\perp_j V,\nabla^\perp_k V\rangle F_k
				\\&
				-6g^{ij}\langle V,\nabla^\perp_k V\rangle \langle A_{ik},V\rangle F_j. 
			\end{split}
		\end{equation}

		Using the definitions of $\phi$, $L$, as well as the formula for $\mathcal{D}^2\mathcal{\varphi}(V,V)$ and that $\nabla_k^\perp \phi = -\frac{1}{2}\langle x,X_j\rangle A_{jk}  -\nabla_k \mathbf{H}$ gives the result for $\phi_{sss}$.

	\end{proof}
	
	\subsection{Specialising to products of spheres}
	
	\begin{theorem}
		\label{thm:normal-var}
		Let $\Sigma =\prod_b \mathbb{S}^{k_b}_{\sqrt{2k_b}}$, and consider suppose $V,W$ are normal vector fields on $\Sigma$ of the form $V = \sum_b u^b N_b$, $W=\sum_b w^b N_b$. Then we have the variation formulae 
		
		\begin{multline}
			\label{eq:d2phiN}
				\Pi(\mathcal{D}^2\mathcal{\varphi}(V,V)) = \sum_{a,b} \Bigg(  - \frac{(u^b)^2}{\sqrt{2k_b}} N_b +  \sqrt{\frac{2}{k_b}} |\nabla^b u^a|^2 N_b \\ - 2\sqrt{\frac{2}{k_b}} u^a\Lap^a u^b N_b - 2\sqrt{\frac{2}{k_a}}\langle \nabla^a u^a, \nabla^a u^b\rangle N_b \Bigg),
		\end{multline}
		\begin{multline}
			\label{eq:d2phiN-pol}
				\Pi(\mathcal{D}^2\mathcal{\varphi}(V,W)) = \sum_{a,b} \Bigg(  - \frac{u^b w^b }{\sqrt{2k_b}} N_b + \sqrt{\frac{2}{k_b}} \langle \nabla^b u^a, \nabla^b w^a\rangle N_b  - \sqrt{\frac{2}{k_b}} (w^a \Lap^a u^b + u^a \Lap^a w^b) N_b  \\ - \sqrt{\frac{2}{k_a}} (\langle \nabla^a u^b , \nabla^a w^a\rangle + \langle \nabla^a w^b, \nabla^a u^a\rangle) N_b \Bigg),
		\end{multline}
		\begin{equation}
			\label{eq:d3phiN}
			\begin{split} 
				\Pi(\mathcal{D}^3\mathcal{\varphi}(V,V,V)) = \sum_{a,b} \Bigg( & -6 u^a \langle \nabla u^a,\nabla u^b\rangle N_b + 3 \frac{(u_b)^3N_b}{2k_b} - 18 \langle \nabla^b u^a , \nabla^b u^a\rangle  \frac{u^b N_b}{2k_b} \\&+18 \frac{(u^b)^2}{2k_b} \Lap^b u^a N_a  -6(\nabla^2 u^a)(\nabla u^b, \nabla u^b) N_a
				\\& +36 \frac{ u^b }{2k_b} \langle \nabla^b u^b, \nabla^b u^a\rangle N_a-6 \langle \nabla u^b ,\nabla  u^a\rangle \Lap u^b N_a \Bigg). 
			\end{split}
		\end{equation}
	\end{theorem}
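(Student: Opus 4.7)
The proof is a direct substitution of the product-of-spheres data into the general variation formulas of Corollaries \ref{cor:d2phiN}, \ref{cor:d2phiN-pol} and Proposition \ref{prop:d3phi}, followed by normal projection and simplification.

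First I would record the key geometric identities on $\Sigma = \prod_b \mathbb{S}^{k_b}_{\sqrt{2k_b}}$. In an orthonormal frame $\{F^b_\alpha\}_{\alpha=1,\ldots,k_b}$ adapted to the product structure, the second fundamental form is block diagonal: $A_{F^b_\alpha F^c_\beta} = -\delta_{bc}\delta_{\alpha\beta}(2k_b)^{-1/2}N_b$, and hence for $V = \sum_a u^a N_a$ the tensor $A^V$ is likewise block diagonal with $A^V_{F^b_\alpha F^b_\beta} = -\delta_{\alpha\beta} u^b/\sqrt{2k_b}$. Each normal $N_b$ has vanishing normal covariant derivative (since $\nabla_{F_i} N_b$ is purely tangential), so $\nabla^\perp_i V = \sum_a (\partial_i u^a) N_a$, and $(\nabla^\perp\nabla^\perp V)_{ij}$ reduces to the Hessian of each component $u^a$. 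Finally, the shrinker equation $\phi \equiv 0$ annihilates every term in Proposition \ref{prop:d3phi} that explicitly contains $\phi$ or $\nabla^\perp \phi$, while the projection $\Pi$ kills every term with a leading $F_j$.

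With these simplifications I would treat the three formulas in turn. For \eqref{eq:d2phiN-pol}, the six surviving terms of Corollary \ref{cor:d2phiN-pol} are evaluated using the block-diagonal $A$: the cubic term $A_{ij} A^V_{ik} A^W_{kj}$ collapses via the Kronecker deltas to a single-factor sum, producing the $-u^b w^b/\sqrt{2k_b}$ contribution; the term $-A_{ij}\langle \nabla^\perp_i V, \nabla^\perp_j W\rangle$ yields the gradient coupling $\sqrt{2/k_b}\langle \nabla^b u^a, \nabla^b w^a\rangle N_b$ after summing over $a$; the two Hessian terms reduce to the $\Lap^a$ contributions; and the two symmetric pairings produce the mixed gradient couplings. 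Doubling gives \eqref{eq:d2phiN-pol}, and specialising $W = V$ gives \eqref{eq:d2phiN}. For \eqref{eq:d3phiN}, I would apply the same substitutions to Proposition \ref{prop:d3phi}; after dropping all $\phi$-terms and tangential $F_j$-terms, only the first four lines contribute nontrivially. Each remaining scalar contraction is a sum on a single factor or a pair of factors, and regrouping by the index structure should reproduce the seven-term expression.

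The main obstacle is the bookkeeping for the third variation: even after all cancellations, Proposition \ref{prop:d3phi} leaves roughly a dozen surviving scalar expressions, each of which must be carefully unpacked under the block-diagonal structure of $A$. In particular, the repeated contractions $A^V_{ij} A^V_{jk} (\nabla^\perp\nabla^\perp V)_{ik}$, $A^V_{ij} A^V_{jl} A^V_{lk} A_{ik}$, and $\langle A_{ij}, \nabla^\perp_i V\rangle A^V_{jk} \nabla^\perp_k V$ each expand into several scalar terms involving products of $u^b$, $\Lap^a u^b$, and $\langle \nabla^a u^a, \nabla^a u^b\rangle$; these must be collected with correct signs and multiplicities to match the stated form, using also the Codazzi identity to reorganise the $\nabla A$ term that appears through $\langle (\nabla A)_{jk,i}, V\rangle$.
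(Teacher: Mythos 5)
Your approach is the paper's own: record the block-diagonal second fundamental form $A = -\sum_b \frac{g^b}{\sqrt{2k_b}}N_b$ and the parallelism $\nabla^\perp N_b = 0$, then substitute into Corollaries \ref{cor:d2phiN}, \ref{cor:d2phiN-pol} and Proposition \ref{prop:d3phi}, discard the terms containing $\phi$ (since $\phi\equiv 0$) and the tangential leading-$F_j$ terms (via $\Pi$), and simplify. This is exactly what the paper does, and it would succeed.

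However, there is a concrete gap in your bookkeeping for the third variation. You assert that after the $\phi$- and $F_j$-terms are discarded, ``only the first four lines contribute nontrivially.'' This omits the term $-6\langle LV, \nabla^\perp_j V\rangle\nabla^\perp_j V$ in Proposition \ref{prop:d3phi}, which contains neither $\phi$ nor a leading $F_j$ and is not, in general, zero: Theorem \ref{thm:normal-var} is stated for an arbitrary normal field, not just a Jacobi field. On the product of spheres one has $LV = \sum_a (\Delta u^a + u^a)N_a$, so this term expands as
\[
-6\langle LV, \nabla^\perp_j V\rangle\nabla^\perp_j V = -6\sum_{a,b}(\Delta u^a + u^a)\langle\nabla u^a,\nabla u^b\rangle N_b,
\]
which, after splitting and reindexing $a\leftrightarrow b$ in the $\Delta$-part, produces precisely the first term $-6 u^a\langle\nabla u^a,\nabla u^b\rangle N_b$ and the last term $-6\langle\nabla u^b,\nabla u^a\rangle\Delta u^b N_a$ of \eqref{eq:d3phiN}. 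Your scheme as written would therefore reproduce a formula missing two of the seven displayed contributions. A smaller point: the Codazzi identity was already consumed in deriving Proposition \ref{prop:d3phi}; when you specialise to products of spheres, $\nabla A = 0$ outright (each $g^b$ and $N_b$ is parallel), so the $\langle(\nabla A)_{jk,i},V\rangle$ term simply vanishes and requires no further reorganisation.
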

	\begin{proof}
		Recall that on the shrinker $\Sigma$ we have $x=\sum_b \sqrt{2k_b}N_b$, and $A=-\sum_b\frac{g^b}{\sqrt{2k_b}}N_b$, hence \[\nabla^\perp_i V= \sum_b u^b_i N_b,\ \ A^V_{ij}=-\sum_b \frac{1}{\sqrt{2k_b}}u^b g^b_{ij}.\] The result follows straightforwardly after substituting into Corollaries \ref{cor:d2phiN}, \ref{cor:d2phiN-pol} and Proposition \ref{prop:d3phi}; we omit the calculations for brevity.  
	\end{proof}
	
	\begin{remark}
		\label{rmk:normalvariation}
		The normal variation formulae of Theorem \ref{thm:normal-var} were sufficient. In fact, the tangent parts at order $k$ vanish if the variations of all orders $j\leq k-1$ vanish. See Propositions \ref{prop:d2PhiK1} and \ref{prop:thirdvar} to follow. 
		
		Further, note that we have only considered normal fields orthogonal to the $\pr_{z_\alpha}$ directions in the above. This will be sufficient because all Jacobi fields orthogonal to rotation have this property, and as a consequence, the choice of $W$ in Section \ref{sec:solW} is also orthogonal to the $\pr_{z_\alpha}$ directions. 
	\end{remark}
	
	\section{Higher order variations on Clifford shrinkers}
	\label{sec:analysis}
	
	In this section we consider the case $\Sigma = \mathbb{S}^{k_1}_{\sqrt{2k_1}} \times \mathbb{S}^{k_2}_{\sqrt{2k_2}}$ and analyse the constraints given by the second and third variations of $\phi$. Specifically, we show in Proposition \ref{prop:d2PhiK1} that the second variation is orthogonal to the Jacobi part $U$. Formally, this constrains the (principal) non-Jacobi part $W$ to be a unique vector field depending on $U$, which is described in Section \ref{sec:solW}. Finally in Section \ref{sec:ord3obs} we are able to show that the third variation, including contribution from $W$, has a definite sign and so provides an obstruction. 
	
	In this section only, we write $x_i, y_j$ in place of $x^1_i, x^2_j$ and (by slight abuse of notation) their restrictions $\theta^1_i , \theta^2_j$ respectively. For convenience we denote $r_b = \sqrt{2k_b}$. Recall $\mathcal{K}_0$ denotes the space of rotation Jacobi fields in $L^2$, and $\mathcal{K}_1$ be the space of Jacobi fields orthogonal to rotations in $L^2$. 
	
	\begin{lemma}
		\label{lem:simpU}
		Suppose that $U\in\mathcal{K}_1$ is a Jacobi field after rotation. Then there are rotations of $\mathbb{R}^{k_1+1}$ and $\mathbb{R}^{k_2+1}$ such that $U = u^b N_b$, where $u^b = r_b u$ and $u = \sum_i a_i x_i y_i$ for some $a_i$. (In particular $a_i=0$ for $i> \min(k_1,k_2)+1$.)
	\end{lemma}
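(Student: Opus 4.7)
The plan is to recognize this as the singular value decomposition (SVD) of the coefficient matrix, viewed under the natural action of ambient rotations of each spherical factor. Starting from Proposition \ref{prop:jacobi-prod-2}, I can write $U = u^1 N_1 + u^2 N_2$ with $u^b = r_b u$, where $u = \sum_{i,j} s_{ij}\, x_i y_j$ for some matrix $s = (s_{ij}) \in \mathbb{R}^{(k_1+1)\times(k_2+1)}$. The goal is then to find rotations in each spherical factor that diagonalize $s$.

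First I would track how rotations $R_1 \in SO(k_1+1)$ and $R_2 \in SO(k_2+1)$ (acting as the identity on the complementary $\mathbb{R}^{N-n-2}$) transform this Jacobi field. These preserve $\Sigma$, they preserve each normal direction $N_b$ under pullback, and they act on coordinate functions by $R_b^* x^b_i = (R_b)_{ij} x^b_j$. A direct substitution then shows that the pullback of $u$ under $(R_1, R_2)$ is $\sum_{k,l} (R_1^T s R_2)_{kl}\, x_k y_l$, so the coefficient matrix transforms as $s \mapsto R_1^T s R_2$. Moreover, the combination $r_1 N_1 + r_2 N_2$ is untouched by these rotations, so the new Jacobi field still lies in $\mathcal{K}_1$ by Proposition \ref{prop:jacobi-prod-2}.

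Next I would apply the real SVD: there exist orthogonal $P \in O(k_1+1)$ and $Q \in O(k_2+1)$ such that $P^T s Q = D$ is diagonal in the rectangular sense, with nonzero entries only in positions $(i,i)$ for $i \leq \min(k_1+1, k_2+1)$. Choosing $(R_1, R_2) = (P, Q)$ immediately puts $u$ in the desired form $\sum_i a_i x_i y_i$ with $a_i = D_{ii}$, and the rank bound for the SVD gives $a_i = 0$ for $i > \min(k_1,k_2) + 1$.

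The only minor point to address is that SVD produces orthogonal matrices rather than rotations. If $\det P = -1$, I would replace $P$ by $P \cdot \mathrm{diag}(-1, 1, \ldots, 1)$; this flips the sign of one column of $P$ and correspondingly the sign of the entry $D_{11}$, so it preserves the diagonal structure at the cost of possibly making $a_1$ negative, which is allowed by the statement. The same adjustment handles $Q$. No step constitutes a serious obstacle; the lemma is essentially a normalization that uses the $SO(k_1+1) \times SO(k_2+1)$ symmetry of $\Sigma$ to put non-rotational Jacobi fields into a canonical diagonal form, which will dramatically simplify solving for $W$ in Section \ref{sec:solW}.
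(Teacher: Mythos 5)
Your proof is correct and follows essentially the same approach as the paper, which also invokes Proposition \ref{prop:jacobi-prod-2} and then applies the singular value decomposition to the coefficient matrix. Your writeup simply spells out the details the paper leaves implicit (the transformation law $s \mapsto R_1^T s R_2$ and the sign adjustment to pass from $O(k_b+1)$ to $SO(k_b+1)$), which is a reasonable elaboration rather than a different route.
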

	\begin{proof}
		By Proposition \ref{prop:jacobi-prod-2} the statement holds with $u = \sum c_{ij} x_i y_j$. Applying singular value decomposition to the matrix $(c_{ij})$ gives the desired form after rotations of $\mathbb{R}^{k_b+1}$. 
	\end{proof}
	
	For the remainder of this section, fix $U = u^b N_b$ as above with $u=\sum_i a_i x_i y_i$. Define $v_1 = \sum_i a_i^2 x_i^2$ and $v_2 = \sum_i a_i^2 y_i^2$. Then $u^2 = \sum_{ij} a_ia_j x_ix_jy_iy_j$ and \[|\nabla^b u|^2 = v_{\bar{b}} - r_b^{-2} u^2,\] where $\bar{b}$ denotes the opposite index to $b=1,2$. Note that $\Lap^b u = -\frac{1}{2}u$ and so $\Lap u =-u$.  Then by the second variation formula (\ref{eq:d2phiN}) we have
	
	\begin{equation}
			\langle \mathcal{D}^2\mathcal{\varphi}(U,U), N_b\rangle = - r_b u^2  +  \sum_{a} \left( 2 r_a^2 r_b^{-1} (v_{\bar{b}} - r_b^{-2} u^2)  +2 r_a u^2  - 4 r_b (v_{\bar{a}} - r_a^{-2} u^2) \right).
	\end{equation}
	
	\begin{proposition}
		\label{prop:d2PhiK1}
		Let $U\in \mathcal{K}_1$. Then $\mathcal{D}^2\mathcal{\varphi}(U,U) = \Pi(\mathcal{D}^2\mathcal{\varphi}(U,U))$ and $\pi_{\mathcal{K}}(\mathcal{D}^2\mathcal{\varphi}(U,U))=0$.
	\end{proposition}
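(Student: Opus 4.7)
The plan is to treat the two assertions separately. For the vanishing of the tangential component, I will invoke Corollary \ref{cor:d2phiN}: in the formula for $\frac{1}{2}\mathcal{D}^2\mathcal{\varphi}(V,V)$ the only non-normal term is $-F_k \langle LV, \nabla^\perp_k V\rangle$, and since $U\in \mathcal{K}_1 \subset \mathcal{K}$ satisfies $LU=0$, this term vanishes, yielding $\mathcal{D}^2\mathcal{\varphi}(U,U) = \Pi(\mathcal{D}^2\mathcal{\varphi}(U,U))$. This is the second-order instance of Remark \ref{rmk:normalvariation}.

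For the projection statement, I will show that the $L^2$-pairing of $\mathcal{D}^2\mathcal{\varphi}(U,U)$ against each Jacobi field $V \in \mathcal{K}$ is zero. By Lemma \ref{lem:simpU}, after a rotation I may assume $U = u^b N_b$ with $u^b = r_b u$ and $u = \sum_i a_i x_iy_i$, and the computation displayed just before the proposition then expresses $\langle \mathcal{D}^2\mathcal{\varphi}(U,U), N_b\rangle$ as an $\mathbb{R}$-linear combination of $u^2$, $v_1 = \sum_i a_i^2 x_i^2$, and $v_2 = \sum_i a_i^2 y_i^2$, with coefficients depending only on $r_1, r_2$. Meanwhile Proposition \ref{prop:jacobi} describes a general Jacobi field as a sum of contributions of two types: fields supported in the $\partial_{z_\alpha}$ directions, and fields whose $N_a$-components are $\mathbb{R}$-linear combinations of bilinears $\theta^1_i \theta^2_j$ (this covers both $\mathcal{K}_1$-fields by Proposition \ref{prop:jacobi-prod-2} and the rotational mixed-sphere fields $\theta^1_i\theta^2_j(r_2^{-1}N_1 - r_1^{-1}N_2)$). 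Pairings with the former vanish automatically, since $\mathcal{D}^2\mathcal{\varphi}(U,U)$ has no $\partial_{z_\alpha}$ component.

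The substantive step is then to prove $\int_\Sigma P \cdot \theta^1_i\theta^2_j \, d\mu = 0$ for each $P \in \{u^2, v_1, v_2\}$. I will use a parity argument: by Fubini each such integral factors into integrals over the two spherical factors, and the antipodal symmetry $\theta \mapsto -\theta$ on any sphere kills any polynomial of odd total degree in the coordinates of that factor. For example, expanding $u^2 = \sum_{k,l} a_k a_l \theta^1_k\theta^1_l\theta^2_k\theta^2_l$, the integral $\int_\Sigma u^2 \cdot \theta^1_i\theta^2_j$ factors into a product of two cubic-monomial sphere integrals, both of which vanish. The integrals involving $v_1$ or $v_2$ vanish even more easily: for instance $\int_\Sigma v_1 \cdot \theta^1_i \theta^2_j$ factors as $\bigl(\int_{\mathbb{S}^{k_1}_{\sqrt{2k_1}}} v_1 \theta^1_i\bigr)\bigl(\int_{\mathbb{S}^{k_2}_{\sqrt{2k_2}}}\theta^2_j\bigr)=0$, since $\int_{\mathbb{S}^{k_2}_{\sqrt{2k_2}}}\theta^2_j=0$.

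I do not anticipate any serious obstacle here: the main computational work of deriving $\langle \mathcal{D}^2\mathcal{\varphi}(U,U), N_b\rangle$ has already been carried out in the excerpt, and the parity argument is robust. The only bookkeeping subtlety is ensuring that the rotational Jacobi fields with mixed $N_1, N_2$ components fall under the same reduction as the $\mathcal{K}_1$-fields, which is immediate since their $N_a$-components also take the form $\theta^1_i\theta^2_j$.
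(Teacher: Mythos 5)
Your proof is correct and takes essentially the same route as the paper: vanishing of the tangential part follows from Corollary \ref{cor:d2phiN} because its only tangent term is $-F_k\langle LV,\nabla^\perp_k V\rangle$ and $LU=0$; orthogonality to $\mathcal{K}$ is established by rotating $U$ into the diagonal form of Lemma \ref{lem:simpU}, observing $\langle\mathcal{D}^2\varphi(U,U),\partial_{z_\alpha}\rangle=0$, and checking that pairing the $u^2, v_1, v_2$ combination against $\theta^1_i\theta^2_j$ gives an integrand of odd degree on each spherical factor. Your explicit Fubini/parity justification and your remark that the mixed-sphere rotational Jacobi fields are covered by the same $\theta^1_i\theta^2_j$ bookkeeping simply flesh out what the paper states more tersely.
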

	\begin{proof}
		Since $LU=0$, Corollary \ref{cor:d2phiN} implies that the tangent part of the second variation $\mathcal{D}^2\mathcal{\varphi}(U,U)$ vanishes. Now after rotations of $\mathbb{R}^{k_1+1}$ and $\mathbb{R}^{k_2+1}$, which fix $\Sigma$, we may assume that $U$ is as in Lemma \ref{lem:simpU}. 
		
		Consider $V = x_iy_j N_b$. It follows from the above calculations that $\int_\Sigma \langle \mathcal{D}^2\mathcal{\varphi}(U,U) ,V\rangle$ is the integral of a polynomial of odd degree in both $\mathbb{R}^{k_b+1}$, $b=1,2$, and hence is equal to 0. 
		
		Since $\langle U,\pr_{z_\alpha}\rangle =0$, by (\ref{eq:d2phiN}) we have that $\langle \mathcal{D}^2\mathcal{\varphi}(U,U) , \pr_{z_\alpha}\rangle=0$. By Proposition \ref{prop:jacobi} this implies that $\mathcal{D}^2\mathcal{\varphi}(U,U)$ is $L^2$-orthogonal to every element $\mathcal{K}$, which completes the proof. 
	\end{proof}
	
	We now proceed to analyse the third variation. 
	
	\subsection{Spherical integrals}
	
	We will reduce all integrals to the following list. Let \[\beta_k(b)=\beta_k(b_1,\cdots, b_{k+1}) = 2\frac{\prod \Gamma(\frac{b_i+1}{2})}{\Gamma(\sum \frac{b_i+1}{2})}.\] We understand that \[\beta_k(b_1,\cdots, b_j) = \beta_k(b_1,\cdots, b_j, 0 , \cdots,0) \] if $j<k$. 
	Then the integral $\int_{\mathbb{S}^{k}(r)} \prod x_i^{b_i}$ is 0 if any $b_i$ are odd, and $\beta_{k}(b) r^{k+|b|}$ otherwise.
	
	\begin{lemma}
		\label{lem:sphint}
		We have 
		\begin{alignat}{3} &\int_\Sigma v_1 &&= r_1^{k_1+2} r_2^{k_2}  \beta_{k_1}(2) \beta_{k_2}(0) \sum a_i^2 
				\\&&&=r_1^2|\Sigma| \frac{1}{k_1+1} \sum a_i^2\nn
				,\\
		& \int_\Sigma v_2 &&= r_1^{k_1} r_2^{k_2+2} \beta_{k_1}(0) \beta_{k_2}(2) \sum a_i^2  
				\\&&&=r_2^2|\Sigma| \frac{1}{k_2+1} \sum a_i^2\nn
				,\\
		& \int_\Sigma v_1^2&& = r_1^{k_1+4} r_2^{k_2} \beta_{k_2}(0) \left( \beta_{k_1}(4) \sum a_i^4 + 2\beta_{k_1}(2,2) \sum_{i<j} a_i^2 a_j^2\right) 
				\\&&&=r_1^4 |\Sigma| \frac{1}{(k_1+1)(k_1+3)} \left(3\sum a_i^4 + 2 \sum_{i<j} a_i^2 a_j^2\right) \nn
				,\\
		&\int_\Sigma v_2^2 &&= r_1^{k_1} r_2^{k_2+4} \beta_{k_1}(0) \left( \beta_{k_2}(4) \sum a_i^4 + 2\beta_{k_2}(2,2) \sum_{i<j} a_i^2 a_j^2\right) 
				\\&&&= r_2^4 |\Sigma| \frac{1}{(k_2+1)(k_2+3)} \left(3\sum a_i^4 + 2 \sum_{i<j} a_i^2 a_j^2\right) \nn
				,\\
		& \int_\Sigma v_1 v_2&& = r_1^{k_1+2} r_2^{k_2+2} \beta_{k_1}(2)\beta_{k_2}(2) \left(\sum a_i^2\right)^2 
				\\&&&= r_1^2 r_2^2 |\Sigma| \frac{1}{(k_1+1)(k_2+1)} \left(\sum a_i^2\right)^2\nn
				,\\
		& \int_\Sigma u^2 &&= r_1^{k_1+2} r_2^{k_2+2} \beta_{k_1}(2)\beta_{k_2}(2) \left(\sum a_i^2\right) 
				\\&&&= r_1^2 r_2^2 |\Sigma| \frac{1}{(k_1+1)(k_2+1)} \left(\sum a_i^2\right)\nn
				,\\
		& \int_\Sigma u^2 v_1 && = r_1^{k_1+4} r_2^{k_2+2} \beta_{k_2}(2) \left( \beta_{k_1}(4)\sum a_i^4 + 2\beta_{k_1}(2,2) \sum_{i<j} a_i^2 a_j^2\right) 
				\\&&&= r_1^4 r_2^2 |\Sigma| \frac{1}{(k_1+1)(k_1+3)(k_2+1)}\left( 3 \sum a_i^4 + 2 \sum_{i<j} a_i^2 a_j^2\right)\nn
				,\\
		&\int_\Sigma u^2 v_2  &&= r_1^{k_1+2} r_2^{k_2+4} \beta_{k_1}(2) \left( \beta_{k_2}(4)\sum a_i^4 + 2\beta_{k_2}(2,2) \sum_{i<j} a_i^2 a_j^2\right) 
				\\&&&= r_1^2 r_2^4 |\Sigma| \frac{1}{(k_1+1)(k_2+1)(k_2+3)}\left( 3 \sum a_i^4 + 2 \sum_{i<j} a_i^2 a_j^2\right)\nn
				,\\
		& \int_\Sigma u^4  &&= r_1^{k_1+4} r_2^{k_2+4}  \left( \beta_{k_1}(4)\beta_{k_2}(4)\sum a_i^4 + 6 \beta_{k_1}(2,2)\beta_{k_2}(2,2) \sum_{i<j} a_i^2 a_j^2\right)
				\\&&&= r_1^4 r_2^4 |\Sigma| \frac{1}{(k_1+1)(k_1+3)(k_2+1)(k_2+3)}\left( 9 \sum a_i^4 + 6 \sum_{i<j} a_i^2 a_j^2\right)\nn
				.\end{alignat}
	\end{lemma}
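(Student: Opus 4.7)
The plan is to reduce each of the nine integrals to a sum of monomial integrals of the form $\int_\Sigma \prod_i x_i^{p_i} \prod_j y_j^{q_j}$, which factor via the product structure of $\Sigma$, and then apply the stated closed-form expression in terms of $\beta_{k_1}$ and $\beta_{k_2}$. Since we understand $\beta_k(b) r^{k+|b|}$ as the value of $\int_{\mathbb{S}^k(r)} \prod x_i^{b_i}$ only when all $b_i$ are even, the first step is simply to expand $v_1,v_2, v_1 v_2, v_1^2, v_2^2, u^2, u^2 v_1, u^2 v_2, u^4$ in monomials and discard any monomial in which some coordinate appears to an odd power.

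More concretely: for $v_1 = \sum_i a_i^2 x_i^2$ every monomial is already of the surviving form, and one directly gets the $\beta_{k_1}(2)\beta_{k_2}(0)$ prefactor. For $u^2 = \sum_{i,j} a_i a_j x_ix_jy_iy_j$ the only surviving terms are $i=j$, giving $\sum a_i^2 x_i^2 y_i^2$; for $u^2 v_1 = \sum_{i,j,k} a_i a_j a_k^2 x_ix_jx_k^2 y_iy_j$ the surviving terms split into $i=j=k$ (contributing $\sum a_i^4 x_i^4$) and $i=j\neq k$ (contributing $\sum_{i\neq k} a_i^2 a_k^2 x_i^2 x_k^2$). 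The expansion of $u^4$ is analogous, producing the $\sum a_i^4$ term from the $i=j=k=l$ case and the three equal partitions $i=j\neq k=l$, $i=k\neq j=l$, $i=l\neq j=k$ yielding the factor of $6$. The $v_1^2$ and $v_2^2$ expansions give the factor of $2$ from the two off-diagonal orderings $\{i<j,j<i\}$.

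Having reduced everything to the beta functions, the second step is routine evaluation of the ratios that appear. Writing $|\Sigma| = r_1^{k_1}r_2^{k_2} \beta_{k_1}(0)\beta_{k_2}(0)$ and using the identity $\Gamma(s+1)=s\Gamma(s)$, one checks
\begin{equation*}
\frac{\beta_k(2)}{\beta_k(0)} = \frac{1}{k+1},\qquad \frac{\beta_k(4)}{\beta_k(0)} = \frac{3}{(k+1)(k+3)},\qquad \frac{\beta_k(2,2)}{\beta_k(0)} = \frac{1}{(k+1)(k+3)},
\end{equation*}
after which each right-hand side in the list of identities follows by substitution and collecting the factors $r_1, r_2$.

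The main obstacle is purely bookkeeping: keeping track of which diagonal and off-diagonal index identifications survive in each polynomial expansion, and ensuring the combinatorial coefficients (the $2$ in $v_a^2, u^2v_a$ and the $6$ in $u^4$) are counted correctly. No geometric or analytic input is needed beyond the product-of-spheres factorisation and the beta-function moments, so the proof is essentially a direct verification and can be dispatched in a few lines per line of the lemma.
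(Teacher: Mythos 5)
Your proposal is correct and takes essentially the same approach as the paper, which proves the lemma tersely by ``degree counting'' plus the same three beta-function ratio identities; your combinatorial bookkeeping (which index identifications survive, and the resulting factors of $2$ in $v_a^2$, $u^2 v_a$ and $6$ in $u^4$) is exactly the content being summarized by ``degree counting.'' One small typo in your write-up: the $i=j\neq k$ contribution to $u^2 v_1$ should read $\sum_{i\neq k} a_i^2 a_k^2 x_i^2 x_k^2 y_i^2$ (the $y_i^2$ factor is needed), though this does not affect the argument since $\int_{\mathbb{S}^{k_2}} y_i^2$ is independent of $i$.
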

	\begin{proof}
		The lemma follows by degree counting, and calculating that \[\frac{\beta_k(2)}{\beta_k(0)} = \frac{\Gamma(3/2)}{\Gamma(1/2)}\frac{\Gamma(\frac{k+1}{2})}{\Gamma(1+\frac{k+1}{2})} = \frac{1}{k+1} ,\] \[\frac{\beta_k(4)}{\beta_k(0)} = \frac{\Gamma(5/2)}{\Gamma(1/2)}\frac{\Gamma(\frac{k+1}{2})}{\Gamma(2+\frac{k+1}{2})} = \frac{3}{(k+1)(k+3)} ,\] \[\frac{\beta_k(2,2)}{\beta_k(0)} = \left(\frac{\Gamma(3/2)}{\Gamma(1/2)}\right)^2 \frac{\Gamma(\frac{k+1}{2})}{\Gamma(2+\frac{k+1}{2})} =  \frac{1}{(k+1)(k+3)} .\] Note that integrating $u^2$ against polynomials of even degree is equivalent to integrating $a_i^2 x_i^2 y_i^2$ against the same polynomials. 
	\end{proof}

	\subsection{Solving for $W$}
	
	\label{sec:solW}
	
	To cancel the second variation due to $U$, we find the unique $W \in \mathcal{K}^\perp$ solving \[LW = - \mathcal{D}^2\mathcal{\varphi}(U,U).\] Note that this is possible since by Proposition \ref{prop:d2PhiK1} the right hand side is indeed in $\mathcal{K}^\perp = \im L$. 
	
	By (\ref{eq:d2phiN}), if $U$ is a Jacobi field orthogonal to rotations then $\langle \mathcal{D}^2\mathcal{\varphi}(U,U), \pr_{z_\alpha}\rangle=0$, so $W$ will take the form $W=\sum_b w^b N_b$. Therefore $LW = (\Lap+1)w^b N_b$. We calculate that \[\Lap u^2 = -2u^2 + 2|\nabla u|^2 = -2(1+r_1^{-2}+r_2^{-2})u^2 + 2 v_1 +2 v_2.\] Also $\Lap v_b = -(1+2r_b^{-2}) v_b + 2\sum a_i^2$. These calculations imply:
	
	\begin{lemma}\label{lem:LW}
		Suppose
		$w = Au^2 + Bv_1 + C v_2 + D\sum a_i^2,$ then \[(\Lap+1)w = -Au^2(1+2r_1^{-2} + 2r_2^{-2}) + 2(A-B r_1^{-2}) v_1 + 2(A-C r_2^{-2}) v_2 + (2B+2C+D)\sum a_i^2.\] 
	\end{lemma}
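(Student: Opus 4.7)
The lemma is purely computational, so the plan is to apply $(\Delta+1)$ term by term to the decomposition $w = Au^2 + Bv_1 + Cv_2 + D\sum a_i^2$, using the two identities for $\Delta u^2$ and $\Delta v_b$ recorded immediately before the statement, and then collect the coefficients of the four basis expressions $u^2, v_1, v_2, \sum a_i^2$.

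Concretely, I would proceed as follows. First, note that $\sum a_i^2$ is a constant on $\Sigma$, so $(\Delta+1)(D\sum a_i^2) = D\sum a_i^2$. Next, using $\Delta u^2 = -2(1+r_1^{-2}+r_2^{-2})u^2 + 2v_1 + 2v_2$ gives
\[
(\Delta+1)(Au^2) = -A(1+2r_1^{-2}+2r_2^{-2})u^2 + 2Av_1 + 2Av_2,
\]
and similarly $\Delta v_b = -(1+2r_b^{-2})v_b + 2\sum a_i^2$ yields
\[
(\Delta+1)(Bv_1) = -2Br_1^{-2}v_1 + 2B\sum a_i^2, \qquad (\Delta+1)(Cv_2) = -2Cr_2^{-2}v_2 + 2C\sum a_i^2.
\]
Summing these four contributions, the coefficient of $u^2$ is $-A(1+2r_1^{-2}+2r_2^{-2})$, the coefficient of $v_1$ is $2A - 2Br_1^{-2} = 2(A-Br_1^{-2})$, the coefficient of $v_2$ is $2A - 2Cr_2^{-2} = 2(A-Cr_2^{-2})$, and the coefficient of $\sum a_i^2$ is $2B+2C+D$. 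This is exactly the right-hand side of the claimed formula.

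Since the two input identities are already asserted in the text, there is essentially no obstacle beyond bookkeeping. If one wished to double-check those identities, the relevant computations are $\nabla^b u = \sum_i a_i \theta^{\bar b}_i \nabla^b \theta^b_i$, giving $|\nabla^b u|^2 = v_{\bar b} - r_b^{-2}u^2$ from $\langle \nabla^b \theta^b_i,\nabla^b \theta^b_j\rangle = \delta_{ij} - r_b^{-2}\theta^b_i \theta^b_j$, together with $\Delta^b \theta^b_i = -\tfrac{1}{2}\theta^b_i$, and then $\Delta u^2 = 2u\Delta u + 2|\nabla u|^2$ with $\Delta u = -u$; the formula for $\Delta v_b$ follows from $\Delta^b(\theta_i^b)^2 = -\big(1+2r_b^{-2}\big)(\theta_i^b)^2 + 2$. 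The only minor point to take care of is that $u^2, v_1, v_2, \sum a_i^2$ are linearly independent as functions on $\Sigma$ (which is clear since they are homogeneous polynomials of different bi-degrees in the $x_i, y_j$ variables), so matching coefficients is legitimate; but in fact the lemma only asserts the identity between these explicit expressions, so even this independence is not needed for the statement itself.
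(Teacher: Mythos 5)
Your proof is correct and follows exactly the route the paper intends: the lemma is a direct linearity computation from the two Laplacian identities $\Lap u^2 = -2(1+r_1^{-2}+r_2^{-2})u^2 + 2v_1 + 2v_2$ and $\Lap v_b = -(1+2r_b^{-2})v_b + 2\sum a_i^2$ stated immediately before it, and the paper gives no further proof. Your term-by-term bookkeeping and the optional verification of the input identities are both accurate.
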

	Note that $W=\sum_b w^b N_b$ is indeed orthogonal to $\mathcal{K}$ if the $w^b$ take the form in Lemma \ref{lem:LW}. 
	
	From the second variation formula we would like to solve \begin{equation}
		\begin{split}
			-(\Lap+1)w^b = {}& \langle \mathcal{D}^2\mathcal{\varphi}(U,U), N_b\rangle \\=&  u^2\left(-r_b - \sum_a 2r_a^2 r_b^{-3} + 2r_a + 4r_b r_a^{-2}\right)   + \sum_a \left(2 r_a^2 r_b^{-1} v_{\bar{b}} - 4 r_b v_{\bar{a}}\right).
		\end{split}
	\end{equation}
	
	Introduce the notation $M_d = \sum_b r_b^d$, e.g. $M_0=2$ and $M_2 = 2n$. 
	
	Comparing coefficients, we find the solution
	
	\[w^b = A^b u^2 + B^b v_1 + C^b v_2 + D^b \sum a_i^2,\]
	
	where \begin{alignat*}{3}&A^b = \frac{ - 2M_2r_b^{-3} + 2M_1 + (4M_{-2}-1) r_b}{1+2M_{-2}},\\
	 &B^1 = r_1^2 (A^1  -2r_1), && B^2 = r_1^2 (A^2 + M_2 r_2^{-1}-2 r_2),\\
	& C^1 = r_2^2 (A^1 + M_2 r_1^{-1} -2 r_1), && C^2 = r_2^2( A^2  -2r_2),\\
	&D^b = -2B^b-2C^b.\end{alignat*}
	
	\subsection{Second variation cross term}
	
	Rewriting the second variation formula (\ref{eq:d2phiN-pol}) in terms of $u$ and $r_b$, it follows that
	
	\begin{equation}
		\begin{split}
			\langle \mathcal{D}^2\mathcal{\varphi}(U,W),U\rangle=  -\sum_b r_b w^b u^2 + \sum_{a,b} \Bigg( & 2 r_a u\langle \nabla^b u,\nabla^b w^a\rangle - 2u (r_b w^a \Lap^a u + r_a u \Lap^a w^b)\\&   - 2r_b u (r_b r_a^{-1}\langle \nabla^a u , \nabla^a w^a\rangle + \langle \nabla^a w^b, \nabla^a u\rangle) \Bigg). 
		\end{split}
	\end{equation}
	
	We can avoid any derivatives on $w$ using the following integration by parts identities:
	
	$\int_\Sigma\langle \nabla^b u, \nabla^b w^a\rangle u = \int_\Sigma w^a(\frac{1}{2}u^2 -|\nabla^b u|^2)$, 
	
	$\int_\Sigma u^2 \Lap^a w^b = \int_\Sigma w^b(-u^2 + 2|\nabla^a u|^2)$,
	
	$\int_\Sigma\langle \nabla^a u,\nabla^a w^b\rangle u = \int_\Sigma w^b (\frac{1}{2}u^2 - |\nabla^a u|^2)$.
	
	Using these implies: 
	
	\begin{equation}
		\begin{split}
			\langle \mathcal{D}^2\mathcal{\varphi}(U,W),U\rangle_{L^2}= - \sum_b r_b \int_\Sigma w^b u^2 + \sum_{a,b} \Bigg( &2 r_a \int_\Sigma w^a (\frac{1}{2}u^2 r_b^0 - |\nabla^b u|^2) + r_b \int_\Sigma w^a u^2 \\& - 2 r_a \int_\Sigma w^b (-u^2 + 2|\nabla^a u|^2)  \\& -2r_b^2 r_a^{-1} \int_\Sigma w^a (\frac{1}{2}u^2 - |\nabla^a u|^2)\\& - 2r_b \int_\Sigma w^b (\frac{1}{2}u^2 r_a^0 - |\nabla^a u|^2)  \Bigg). 
		\end{split}
	\end{equation}
	
	Here we have left in factors $r_a^0$ to help guide the reader. Substituting $|\nabla^b u|^2 = v_{\bar{b}} - r_b^{-2} u^2$ and swapping the roles of some indices $a,b$, we have 
	
	\begin{equation}
		\begin{split}
			\langle \mathcal{D}^2\mathcal{\varphi}(U,W),U\rangle_{L^2}= -\sum_b r_b \int_\Sigma w^b u^2 +\sum_{a,b} \Bigg( & 2 r_b \int_\Sigma w^b (\frac{1}{2}u^2 r_a^0 - v_{\bar{a}} + r_a^{-2} u^2) + r_a \int_\Sigma w^b u^2 \\& - 2 r_a \int_\Sigma w^b (-u^2 + 2v_{\bar{a}} -2r_a^{-2} u^2)  \\&-2r_a^2 r_b^{-1} \int_\Sigma w^b (\frac{1}{2}u^2 - v_{\bar{b}} +r_b^{-2}u^2 ) \\&- 2r_b \int_\Sigma w^b (\frac{1}{2}u^2 r_a^0 - v_{\bar{a}} + r_a^{-2}u^2) \Bigg).
		\end{split}
	\end{equation}

	Collecting terms and performing some sums over $a$ gives finally:
	
	\begin{equation}
		\label{eq:D2phiUWU}
		\begin{split}
			\langle \mathcal{D}^2\mathcal{\varphi}(U,W),U\rangle_{L^2}=&\sum_{b}   (3M_1  + 4M_{-1} - M_2  r_b^{-1} -2M_2 r_b^{-3} -r_b )\int_\Sigma w^b u^2    \\&  + \sum_b 2M_2 r_b^{-1} \int_\Sigma w^b v_{\bar{b}} - \sum_{a,b} 4r_a \int_\Sigma w^b v_{\bar{a}}. \end{split}
	\end{equation}

	\subsection{Third variation}
	
	Rewriting the third variation formula (\ref{eq:d3phiN}) in terms of $u$ and $r_b$, it follows that
	
	\begin{equation}
		\begin{split} \langle \mathcal{D}^3\mathcal{\varphi}(U,U,U), U\rangle = \sum_{a,b} \Bigg(& -6 r_a^2 r_b^2 u^2 |\nabla u|^2
			+3r_b^2 u^4 - 18r_a^2 |\nabla^b u|^2 u^2 + 18 r_a^2 u^3 \Lap^b u \\& - 6r_a^2r_b^2 u \nabla^2 u (\nabla u,\nabla u) + 36 r_a^2 u^2 |\nabla^b u|^2 - 6 r_a^2 r_b^2 u\Lap u |\nabla u|^2 \Bigg) . 
		\end{split}
	\end{equation}
	
	Now we use the integration by parts identities $\int_\Sigma u^2 |\nabla u|^2 = - \frac{1}{3} \int_\Sigma u^3 \Lap u = \frac{1}{3}\int_\Sigma u^4$, similarly $\int_\Sigma u^2 |\nabla^b u|^2 = \frac{1}{6}\int_\Sigma u^4$, and \[\int_\Sigma u \nabla^2 u(\nabla u,\nabla u) = \int_\Sigma u u_{ij} u_i u_j = -\frac{1}{2} \int_\Sigma |\nabla u|^4 + u\Lap u |\nabla u|^2 = -\frac{1}{2} \int_\Sigma |\nabla u|^4 - \frac{1}{3} u^4.\] 
	
	Collecting terms and performing the sums as in the previous section then gives: 
	
	\begin{equation}
		\label{eq:D3phiUUUU}
		\langle \mathcal{D}^3\mathcal{\varphi}(U,U,U), U\rangle_{L^2} = ( -9M_2 - M_2^2)\int_\Sigma u^4 + 3M_2^2 \int_\Sigma |\nabla u|^4. 
	\end{equation}

	\subsection{Third order obstruction}
	\label{sec:ord3obs}

	\begin{proposition}
		\label{prop:thirdvar}
		There exists $\delta>0$ such that for any $U\in \mathcal{K}_1$, if $W\in \mathcal{K}^\perp$ is the solution of $LW = -\mathcal{D}^2\mathcal{\varphi}(U,U)$ then
		
		\[ \mathcal{D}^3\mathcal{\varphi} +3 \mathcal{D}^2 \mathcal{\varphi} (U,W) = \Pi(\mathcal{D}^3\mathcal{\varphi} +3 \mathcal{D}^2 \mathcal{\varphi} (U,W)),\]
		
		\begin{equation}\|\pi_{\mathcal{K}_1}(\mathcal{D}^3\mathcal{\varphi}(U,U,U) + 3\mathcal{D}^2\mathcal{\varphi}(U,W))\|_{L^2} \geq \delta \|U\|_{L^2}^3.\end{equation}
		
	\end{proposition}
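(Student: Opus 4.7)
The plan is to establish the two claims separately, reducing the quantitative bound to a single $L^2$ pairing with $U$ via duality on the finite-dimensional space $\mathcal{K}_1$.

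For the normality statement, I would exploit the inductive structure pointed out in Remark \ref{rmk:normalvariation}: the tangential contributions in the variation formulas of Corollary \ref{cor:d2phiN-pol} and Proposition \ref{prop:d3phi} appear only through inner products with $\phi$, $LV$, and $\mathcal{D}^2\varphi(V,V)$. Since $\phi \equiv 0$ on $\Sigma$, $LU = 0$ by assumption on $U \in \mathcal{K}_1$, and $LW = -\mathcal{D}^2\varphi(U,U)$ by the defining equation for $W$, each such tangential term vanishes when evaluated on the relevant polarisation of $U$ and $W$. Inspecting the terms in the cubic combination $\mathcal{D}^3\varphi(U,U,U) + 3\mathcal{D}^2\varphi(U,W)$ then confirms that the tangent parts cancel identically.

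For the quantitative lower bound, since $\mathcal{K}_1$ is finite dimensional and $U \in \mathcal{K}_1$, Cauchy-Schwarz gives
\[
\|\pi_{\mathcal{K}_1}(X)\|_{L^2} \, \|U\|_{L^2} \geq \langle \pi_{\mathcal{K}_1}(X), U\rangle_{L^2} = \langle X, U\rangle_{L^2},
\]
so it is enough to prove
\[
\bigl\langle \mathcal{D}^3\varphi(U,U,U) + 3 \mathcal{D}^2\varphi(U,W),\ U\bigr\rangle_{L^2} \geq \delta' \|U\|_{L^2}^4.
\]
By Lemma \ref{lem:simpU} I may assume $u = \sum_i a_i x_i y_i$ after an ambient rotation, so that $\|U\|_{L^2}^2$ is a universal constant times $\sum_i a_i^2$ by Lemma \ref{lem:sphint}. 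I then substitute the explicit formula $w^b = A^b u^2 + B^b v_1 + C^b v_2 + D^b \sum a_i^2$ from Section \ref{sec:solW} into the cross-term identity \eqref{eq:D2phiUWU}, combine with the pure third variation \eqref{eq:D3phiUUUU}, and reduce every integral to the list in Lemma \ref{lem:sphint} (together with $\int |\nabla u|^4$, which is handled by the computations intended for Appendix \ref{sec:poly}). The result will be a homogeneous expression of the form $\alpha \sum_i a_i^4 + \beta \sum_{i<j} a_i^2 a_j^2$ with $\alpha, \beta$ rational in $k_1, k_2$.

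The main obstacle is the final algebraic step: showing that $\alpha$ and $\beta$ are in a regime guaranteeing a lower bound by a positive multiple of $(\sum a_i^2)^2 = \sum a_i^4 + 2\sum_{i<j} a_i^2 a_j^2$. Neither the third variation $\langle \mathcal{D}^3\varphi(U,U,U),U\rangle_{L^2}$ nor the cross term $3\langle \mathcal{D}^2\varphi(U,W),U\rangle_{L^2}$ is individually of a useful sign; only after the obstruction solves for $W$ do cancellations produce a definite form, and this is precisely the analytic manifestation of non-integrability of the Jacobi field $U$. I would manage the bookkeeping by expressing everything in the condensed variables $M_d = r_1^d + r_2^d$ used in Section \ref{sec:solW}, clearing denominators, and verifying positivity of $\alpha$ and $\beta$ as explicit polynomials in $k_1, k_2 \geq 1$; the uniform constant $\delta$ then emerges from this positivity by the equivalence $(\min(k_1,k_2)+1)^{-1}(\sum a_i^2)^2 \leq \sum a_i^4 \leq (\sum a_i^2)^2$.
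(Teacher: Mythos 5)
Your approach tracks the paper's proof closely: the same reduction of the normal/tangential claim via Corollary \ref{cor:d2phiN-pol} and Proposition \ref{prop:d3phi} together with $\phi\equiv 0$, $LU=0$, $LW=-\mathcal{D}^2\varphi(U,U)$; the same Cauchy--Schwarz/duality reduction to a pairing with $U$; the same normalization via Lemma \ref{lem:simpU} and reduction to the spherical integrals of Lemma \ref{lem:sphint}. The gap is in your final algebraic step. You propose to write the pairing as $\alpha\sum a_i^4+\beta\sum_{i<j}a_i^2a_j^2$ and verify $\alpha>0$ and $\beta>0$ separately. In the paper's basis this is $Q_4\sum a_i^4+Q_2\left(\sum a_i^2\right)^2$, so $\alpha=Q_4+Q_2$ and $\beta=2Q_2$; but $Q_2$ is \emph{not} positive. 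For instance along the diagonal $r_1=r_2=r$ the polynomial $P_2$ in Appendix \ref{sec:poly} collapses to $-64r^8<0$, so $\beta<0$ and a coefficientwise positivity check cannot succeed.

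What rescues the estimate is precisely the constraint you mention only as a last step: there are at most $\min(k_1,k_2)+1$ nonzero $a_i$, so $\sum a_i^4\geq(\sum a_i^2)^2/(\min(k_1,k_2)+1)$. This bound has to be inserted \emph{before} the positivity check, not after, and it is only useful together with the separate fact $Q_4\geq 0$. Concretely (taking WLOG $k_1\leq k_2$), nonnegativity of $Q_4$ allows one to replace $\sum a_i^4$ by its lower bound and reduce the claim to positivity of the combined quantity $Q_0:=Q_4/(k_1+1)+Q_2$ on the admissible range $r_1,r_2\geq\sqrt 2$. These are exactly Claims 1 and 2 in the paper's appendix. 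The sign of $Q_2$ alone is wrong; in the regime of many comparably sized $a_i$ the $Q_2$-term genuinely dominates and is negative, and it is only the bound on the \emph{number} of nonzero $a_i$ that makes the obstruction definite.
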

	\begin{proof}
		By choice of $W$, it follows from Corollary \ref{cor:d2phiN-pol} and Proposition \ref{prop:d3phi} that the tangent part of $\frac{1}{6}\mathcal{D}^3\mathcal{\varphi}(U,U,U)  + \frac{1}{2} \mathcal{D}^2 \mathcal{\varphi} (U,W)$ is zero.

		To prove the estimate on the projection to $\mathcal{K}_1$, it is enough to prove that \begin{equation}\label{eq:thirdvar}\langle \mathcal{D}^3\mathcal{\varphi}(U,U,U) + 3\mathcal{D}^2\mathcal{\varphi}(U,W), U\rangle_{L^2} \geq \delta \|U\|_{L^2}^4.\end{equation} 
		
		After a rotation which fixes $\Sigma$, we may assume $U = u^b N_b$ with $u^b =r_bu$ and $u =\sum_i a_i x_iy_i$ as before. Then $\|U\|_{L^2}^2 = c \sum_i a_i^2$, where $c=2 r_1^2 r_2^2 |\Sigma| \frac{1}{(k_1+1)(k_2+1)}$. We compute the left hand side of (\ref{eq:thirdvar}) by expanding equation (\ref{eq:D2phiUWU}) using the solution of $W$ given in Section \ref{sec:solW}, and equation (\ref{eq:D3phiUUUU}) using  \[|\nabla u|^4 = (v_1+v_2 -(r_1^{-2}+r_2^{-2})u^2)^2.\] This reduces each term to one of the spherical integrals computed in Lemma \ref{lem:sphint}.
		
		Finally, using that $2\sum_{i<j} a_i^2 a_j^2 = \left(\sum_i a_i^2\right)^2 - \sum a_i^4$, we find
		
		\[\langle \mathcal{D}^3\mathcal{\varphi}(U,U,U) + 3\mathcal{D}^2\mathcal{\varphi}(U,W)), U\rangle_{L^2} = |\Sigma|  \left( Q_4 \sum_i a_i^4 + Q_2 \left(\sum_i a_i^2\right)^2\right) ,\] where $Q_2,Q_4$ are certain rational functions of $r_1,r_2$. We also introduce the quantity \[Q_0 := \frac{Q_4}{1+2r_1^2} + Q_2.\] We will need some positivity properties of these functions; the explicit form of $Q_0, Q_2, Q_4$ and the proofs of these properties are left to the appendix. In particular, we have:
		
		Claim 1: $Q_4\geq 0$ for any $r_1,r_2 \geq 0$. 
		
		Claim 2: $Q_0\geq \delta(r_1,r_2)>0$ for any $r_1,r_2 \geq \sqrt{2}$. 
		
		Given the claims, note that by the power means inequality we have $\sum a_i^4 \geq \frac{1}{\#\{a_i \neq 0 \}}\left(\sum_i a_i^2\right)^2.$ But there were at most $\min(k_1,k_2)+1$ nonzero $a_i$. Without loss of generality assume that $k_1\leq k_2$. Then by Claim 1, we have \[Q_4 \sum a_i^4 + Q_2 \left(\sum_i a_i^2\right)^2 \geq \frac{Q_4}{1+k_1}\left(\sum_i a_i^2\right)^2 + Q_2 (\sum a_i^2)^2 = Q_0 \left(\sum_i a_i^2\right)^2.\] The desired inequality (\ref{eq:thirdvar}) now follows directly from Claim 2.  
	\end{proof}
	
	\section{Quantitative rigidity}
	\label{sec:quant-rigidity}
	
	The main goal of this section is to prove the H\"{o}lder case of the quantitative rigidity Theorem \ref{thm:quant-rigidity-intro}. It will follow easily after we prove our main estimate for normal fields orthogonal to rotations:
	
	\begin{theorem}
		\label{thm:quant-rigidity}
		Let $\Sigma = \mathbb{S}^{k_1}_{\sqrt{2k_1}} \times \mathbb{S}^{k_2}_{\sqrt{2k_2}}$. There exists $\epsilon_0 >0$ such that if $V \in \mathcal{K}_0^\perp$ and $\|V\|_{C^{2,\alpha}} \leq \epsilon_0$, then 	\begin{equation}
			\|V\|_{C^{2,\alpha}}^3
			\leq \|\phi_V\|_{C^{0,\alpha}}.
		\end{equation}
	\end{theorem}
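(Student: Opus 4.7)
The plan is to execute the iterative expansion strategy outlined in the introduction, carrying out every estimate in H\"older norms. For $V\in\mathcal{K}_0^\perp$ with $\|V\|_{C^{2,\alpha}}\le \epsilon_0$, I record the Taylor expansion
\[
\phi_V = LV + \tfrac{1}{2}\mathcal{D}^2\mathcal{\varphi}(V,V) + \tfrac{1}{6} \mathcal{D}^3\mathcal{\varphi}(V,V,V) + R_4(V),
\]
where the multilinear forms $\mathcal{D}^k\mathcal{\varphi}$ are as computed in Section \ref{sec:variation} and (being differential operators of order two in each entry) satisfy the standard product estimate $\|\mathcal{D}^k\mathcal{\varphi}(V_1,\ldots,V_k)\|_{C^{0,\alpha}} \le C\prod \|V_i\|_{C^{2,\alpha}}$, while $\|R_4(V)\|_{C^{0,\alpha}}\le C\|V\|_{C^{2,\alpha}}^4$ by standard remainder analysis and smoothness of $\Sigma$. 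I then decompose $V=U+h$ with $U=\pi_{\mathcal{K}_1}V\in\mathcal{K}_1$ and $h\in\mathcal{K}^\perp$, and further $h = h' + \tfrac{1}{2} W$ where $W\in\mathcal{K}^\perp$ is the unique solution of $LW = -\mathcal{D}^2\mathcal{\varphi}(U,U)$; this is solvable by Proposition \ref{prop:d2PhiK1}, and finite-dimensionality of $\mathcal{K}_1$ gives $\|W\|_{C^{2,\alpha}}\le C\|U\|_{C^{2,\alpha}}^2$.

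Since $LU=0$, elliptic invertibility of $L$ on $\mathcal{K}^\perp$ in H\"older spaces together with the Taylor expansion yields the second-order bound
\[
\|h\|_{C^{2,\alpha}} \le C\|Lh\|_{C^{0,\alpha}} \le C\bigl(\|\phi_V\|_{C^{0,\alpha}} + \|U\|_{C^{2,\alpha}}^2\bigr)
\]
after absorbing $\|h\|^2\le\epsilon_0\|h\|$ on the left. Substituting $h = h' + \tfrac{1}{2}W$ into the Taylor expansion cancels the $\tfrac{1}{2}\mathcal{D}^2\mathcal{\varphi}(U,U)$ term against $\tfrac{1}{2}LW$, leaving
\[
\phi_V = Lh' + \tfrac{1}{2}\mathcal{D}^2\mathcal{\varphi}(U,W) + \tfrac{1}{6} \mathcal{D}^3\mathcal{\varphi}(U,U,U) + E,
\]
where $E$ collects mixed multilinear terms each containing at least one factor of $h$ or $h'$, plus $R_4$. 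Applying $L^{-1}$ on $\mathcal{K}^\perp$ once more, combined with the second-order bound for $h$, then produces the third-order bound
\[
\|h'\|_{C^{2,\alpha}} \le C\bigl(\|\phi_V\|_{C^{0,\alpha}} + \|U\|_{C^{2,\alpha}}^3\bigr).
\]

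To extract a lower bound for $\|U\|$, I project the displayed equation onto $\mathcal{K}_1$: the $Lh'$ term vanishes, and Proposition \ref{prop:thirdvar}, together with the equivalence of $L^2$ and $C^{2,\alpha}$ norms on the finite-dimensional space $\mathcal{K}_1$, yields
\[
\bigl\|\pi_{\mathcal{K}_1}\bigl(\tfrac{1}{6} \mathcal{D}^3\mathcal{\varphi}(U,U,U) + \tfrac{1}{2}\mathcal{D}^2\mathcal{\varphi}(U,W)\bigr)\bigr\|_{L^2} \ge \delta' \|U\|_{C^{2,\alpha}}^3.
\]
Since the $L^2$-projection is bounded and $C^{0,\alpha}\hookrightarrow L^2$ on the compact $\Sigma$,
\[
C\|\phi_V\|_{C^{0,\alpha}} \ge \|\pi_{\mathcal{K}_1}\phi_V\|_{L^2} \ge \delta'\|U\|_{C^{2,\alpha}}^3 - \|\pi_{\mathcal{K}_1} E\|_{L^2}.
\]
Every term in $E$ is a product of total degree at least four in $\|U\|,\|h\|,\|h'\|$ containing a factor of $h$ or $h'$, so substituting the earlier bounds gives $\|\pi_{\mathcal{K}_1}E\|_{L^2} \le C\epsilon_0\bigl(\|\phi_V\|_{C^{0,\alpha}} + \|U\|_{C^{2,\alpha}}^3\bigr)$. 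Choosing $\epsilon_0$ small absorbs this error, yielding $\|U\|_{C^{2,\alpha}}^3 \le C\|\phi_V\|_{C^{0,\alpha}}$. Cubing the $h$-estimate and using $\|U\|^6 \le (\|U\|^3)^2 \le C\epsilon_0\|\phi_V\|_{C^{0,\alpha}}$ then produces $\|V\|_{C^{2,\alpha}}^3 \le C\|\phi_V\|_{C^{0,\alpha}}$.

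The main obstacle is closing this iteration uniformly: the bounds for $h$, $h'$, and the lower bound for $\|U\|^3$ are mutually coupled, so the absorptions must be carefully ordered to verify that a single $\epsilon_0=\epsilon_0(k_1,k_2,\alpha)$ suffices throughout. A secondary technical point is making the Taylor expansion with uniform H\"older remainder rigorous for normal graphs over $\Sigma$; this follows from smoothness and compactness of $\Sigma$ together with the explicit form of $\mathcal{D}^k\mathcal{\varphi}$ in Section \ref{sec:variation}.
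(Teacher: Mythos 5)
Your proposal is correct and follows essentially the same strategy as the paper's proof: decompose $V = U + h$ with $U = \pi_{\mathcal{K}}V$ and $h\in\mathcal{K}^\perp$, refine $h = h' + \tfrac12 W$ with $LW = -\mathcal{D}^2\varphi(U,U)$, use the Schauder estimate for $L$ to bound $h$ at second order and $h'$ at third order, then project the third-order Taylor expansion onto $\mathcal{K}_1$ and invoke Proposition \ref{prop:thirdvar} to extract the obstruction $\|U\|^3 \lesssim \|\phi_V\|_{C^{0,\alpha}}$, finally reassembling $V$. The paper organizes the Taylor expansion into three separate stages (Lemmas \ref{lem:ord1}, \ref{lem:ord2}, \ref{lem:ord3}) and tracks each error term explicitly, whereas you write the expansion through order four at once and argue by degree-counting on the lump $E$; both routes require the same absorptions and yield the same closure, though the staged version is arguably easier to audit because the $h$- and $h'$-bounds are established before they are recycled. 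Two small wrinkles in your write-up: the description of $E$ as consisting of terms ``each containing at least one factor of $h$ or $h'$'' overlooks $\mathcal{D}^2\varphi(W,W)$ (which has neither but is still fourth order in $\|U\|$, so the degree count is unaffected); and the remark that finite-dimensionality of $\mathcal{K}_1$ yields $\|W\|_{C^{2,\alpha}}\le C\|U\|_{C^{2,\alpha}}^2$ works (a quadratic map on a finite-dimensional space is automatically bounded in that way) but obscures that the honest source is the elliptic estimate applied to $LW = -\mathcal{D}^2\varphi(U,U)$, which is how the paper phrases it.
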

	
	In this section we use Taylor expansion about $\Sigma$ to quantify the variational analysis of Sections \ref{sec:jacobi} and \ref{sec:analysis}. Specifically, first order expansion will show that the Jacobi part $U$ of $V$ is dominant. That is, the non-Jacobi part $h$ must be higher order; second order expansion then shows that $h$ must be $\frac{1}{2}W$ up to an even higher order term $h'$. Third order expansion finally controls $U$, which allows us to finally conclude that $\|V\|_{C^{2,\alpha}} \leq \|\phi_V\|_{C^{0,\alpha}}$. 
	
	Throughout this section, $\Sigma= \mathbb{S}^{k_1}_{\sqrt{2k_1}} \times \mathbb{S}^{k_2}_{\sqrt{2k_2}}$ and $V$ is a normal vector field on $\Sigma$ with $\|V\|_{C^{2,\alpha}} <\epsilon$. To begin the analysis, note that shrinker quantity $\phi$ defines an analytic functional $\mathcal{\varphi}$ so that $\phi_V = \mathcal{\varphi}(x, V,\nabla V, \nabla^2 V)$. In particular \begin{equation}\label{eq:phiC0a} \|\phi_V\|_{C^{0,\alpha}} \leq C \|V\|_{C^{2,\alpha}} \leq C\epsilon ,\end{equation} so lower powers of $V,\phi_V$ are dominant for small enough $\epsilon$. 
	
	We will repeatedly use the following trick:
	
	\begin{lemma}
		\label{lem:projection}
		Let $V$ be a normal vector field on $\Sigma$ and $\mathcal{K}$ a finite dimensional subspace of $L^2$. Then 
		\begin{equation}
			\|\pi_{\mathcal{K}} (V)\|_{C^{2,\alpha}} \leq C  \|\pi_{\mathcal{K}} (V)\|_{L^2} \leq C'\|V\|_{L^2}\leq  C'' \|V\|_{C^{0,\alpha}}.
		\end{equation}
	\end{lemma}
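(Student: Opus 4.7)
The lemma is a routine bookkeeping statement, to be verified by chaining three elementary inequalities that each rest on a different standard principle. The plan is to treat each inequality separately.

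\textbf{First inequality.} In all intended applications (e.g.\ $\mathcal{K}$, $\mathcal{K}_0$, or $\mathcal{K}_1$ as in Section \ref{sec:jacobi}), the subspace $\mathcal{K}$ is finite-dimensional, and by Proposition \ref{prop:jacobi} (or elliptic regularity for $L$ more generally) its elements are smooth, so $\mathcal{K} \subset C^{2,\alpha}$. On any finite-dimensional normed vector space all norms are equivalent, so in particular $\|\cdot\|_{C^{2,\alpha}} \leq C \|\cdot\|_{L^2}$ on $\mathcal{K}$, with $C$ depending only on $\mathcal{K}$. Since $\pi_{\mathcal{K}}(V) \in \mathcal{K}$, applying this to $\pi_{\mathcal{K}}(V)$ yields the first bound.

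\textbf{Second inequality.} By definition $\pi_{\mathcal{K}}$ is $L^2$-orthogonal projection onto $\mathcal{K}$, which is a contraction on $L^2$. Hence $\|\pi_{\mathcal{K}}(V)\|_{L^2} \leq \|V\|_{L^2}$, and one may take $C' = 1$.

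\textbf{Third inequality.} Since $\Sigma$ is compact, $|V(x)| \leq \|V\|_{C^0} \leq \|V\|_{C^{0,\alpha}}$ pointwise, and integrating over $\Sigma$ gives $\|V\|_{L^2} \leq \vol(\Sigma)^{1/2} \|V\|_{C^{0,\alpha}}$, so $C''=\vol(\Sigma)^{1/2}$ suffices.

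There is no real obstacle: each step is a direct application of finite-dimensionality, orthogonal projection, or compactness. The utility of stating the inequalities in a single chain is bookkeeping, namely to justify swapping between weak ($L^2$) and strong ($C^{2,\alpha}$) norms whenever the relevant quantity lies (or is projected into) the finite-dimensional Jacobi subspace. This interchange is precisely what is needed to combine the $L^2$-orthogonal decomposition $V=U+h$ with the Taylor-expansion estimates of Section \ref{sec:analysis}, which naturally live in H\"older spaces.
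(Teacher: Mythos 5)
Your proof is correct and follows exactly the same three-step argument as the paper: equivalence of norms on a finite-dimensional space, the contraction property of $L^2$-orthogonal projection, and integration over the compact $\Sigma$. You supply slightly more detail (noting $\mathcal{K}\subset C^{2,\alpha}$ by regularity, and the explicit constants $C'=1$, $C''=\vol(\Sigma)^{1/2}$), but the approach is the same.
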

	\begin{proof}
		The first inequality is the equivalence of norms on the finite dimensional space $\mathcal{K}$, the second is because $\pi_{\mathcal{K}}$ is the $L^2$-projection and the last inequality is just integration. 
	\end{proof}
	
	We also need the following elliptic estimate:
	
	\begin{lemma}[Schauder estimate for $L$]
		\label{lem:schauder}
		There exists $C$ so that if $V$ is normal and $U=\pi_\mathcal{K}(V)$, where $\mathcal{K}=\ker L$ is the space of Jacobi fields, then
		\begin{equation}
			\|V-U\|_{C^{2,\alpha}}\leq C\|LV\|_{C^{0,\alpha}}.
		\end{equation}
	\end{lemma}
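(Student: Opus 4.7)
The plan is to derive this as a standard consequence of Schauder theory for the elliptic operator $L$ on the closed manifold $\Sigma$, combined with the finite-dimensionality of $\mathcal{K} = \ker L$. First, I set $W = V - U$. Since $U = \pi_{\mathcal{K}}(V)$ is the $L^2$-projection, $W$ lies in $\mathcal{K}^\perp$, and $LW = LV - LU = LV$ because $LU = 0$. It therefore suffices to show that $\|W\|_{C^{2,\alpha}} \leq C\|LW\|_{C^{0,\alpha}}$ for every normal $W \in \mathcal{K}^\perp$.

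Second, by (\ref{eq:L on product}), $L = \Delta + \frac{1}{2} + \frac{1}{2}\sum_b \Pi_{N_b}$ is a linear, second-order, uniformly elliptic operator with smooth coefficients, acting on sections of the smooth normal bundle over the closed manifold $\Sigma$. Working in local trivialisations of the normal bundle and patching via a finite cover, the classical interior Schauder estimate yields a constant $C_1 > 0$ such that
\begin{equation}
\|W\|_{C^{2,\alpha}} \leq C_1\bigl(\|LW\|_{C^{0,\alpha}} + \|W\|_{C^0}\bigr)
\end{equation}
for every normal vector field $W$; since $\Sigma$ is closed there are no boundary terms.

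The main step, though standard, is to absorb the $\|W\|_{C^0}$ term on the right when $W \in \mathcal{K}^\perp$. I would argue by contradiction: if no such $C$ exists, take a sequence $W_n \in \mathcal{K}^\perp$ with $\|W_n\|_{C^{2,\alpha}} = 1$ and $\|LW_n\|_{C^{0,\alpha}} \to 0$. By Arzel\`a--Ascoli, a subsequence converges in $C^2$ to some $W_\infty$, and passing to the limit in $LW_n \to 0$ gives $LW_\infty = 0$, so $W_\infty \in \mathcal{K}$. Since $C^2$ convergence on the compact $\Sigma$ implies $L^2$ convergence, and $\mathcal{K}^\perp$ is $L^2$-closed, we also have $W_\infty \in \mathcal{K}^\perp$, hence $W_\infty = 0$. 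Applying the Schauder estimate to $W_n$ then forces $1 \leq C_1(\|LW_n\|_{C^{0,\alpha}} + \|W_n\|_{C^0}) \to 0$, a contradiction. This yields the desired estimate with a uniform constant $C$. The finite-dimensionality of $\mathcal{K}$ (established in Section \ref{sec:jacobi}) is used implicitly to guarantee that $\mathcal{K}^\perp$ is a genuine closed subspace on which $L$ is injective, which is precisely what makes the compactness argument go through.
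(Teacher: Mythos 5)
The paper states Lemma \ref{lem:schauder} without proof, treating it as a standard consequence of interior Schauder theory and Fredholm theory for the self-adjoint elliptic operator $L$ on the closed manifold $\Sigma$. Your argument — reduce to $W = V - U \in \mathcal{K}^\perp$ with $LW = LV$, invoke the local Schauder estimate $\|W\|_{C^{2,\alpha}} \leq C_1(\|LW\|_{C^{0,\alpha}} + \|W\|_{C^0})$, and absorb the lower-order term by a compactness/contradiction argument using injectivity of $L$ on $\mathcal{K}^\perp$ — is precisely the standard proof of such an estimate, and it is correct as written.
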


	We proceed to apply Taylor expansion to $\phi_{sV}$ about $s=0$.

	\subsection{First order expansion}
	\label{sec:ord1}
	
	In this section we do not need to assume $V$ is orthogonal to rotations. We decompose $V= U+h$ where $U =\pi_\mathcal{K}(V)$ and $\mathcal{K}\subset L^2$ is the space of Jacobi fields, which is finite dimensional. Then $h\in \mathcal{K}^\perp$. 
	
	First, we have the following Taylor expansion estimate:
	
	\begin{lemma}
		\label{lem:ord1}
		\begin{equation}
			\|\phi_V - Lh\|_{C^{0,\alpha}} \leq C \|V\|_{C^{2,\alpha}}^2.
		\end{equation}
	\end{lemma}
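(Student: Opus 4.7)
The plan is to Taylor expand $V \mapsto \phi_V$ about $V=0$ and identify the first-order term as exactly $Lh$, with all higher-order terms absorbed into the quadratic error. Since $\phi_V = \mathcal{\varphi}(x, V, \nabla V, \nabla^2 V)$ depends analytically on the 2-jet of $V$ (because the induced metric, mean curvature, and position projection of the graph of $V$ over $\Sigma$ depend rationally on $(V, \nabla V, \nabla^2 V)$ with denominators bounded away from zero for small $\|V\|_{C^{2,\alpha}}$), I may write
\begin{equation*}
\phi_V = \phi_\Sigma + \mathcal{D}\mathcal{\varphi}(V) + R(V),
\end{equation*}
where $R(V)$ collects the terms of order $\geq 2$ in the 2-jet of $V$.

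The first two terms simplify immediately. The zeroth-order contribution $\phi_\Sigma$ vanishes because $\Sigma$ is a self-shrinker. By the first variation formula \eqref{eq:dphi} of Proposition \ref{prop:1stvar}, $\mathcal{D}\mathcal{\varphi}(V) = LV - F_j g^{ij}\langle \nabla^\perp_{F_i} V, \phi_\Sigma\rangle$, and the second term again vanishes since $\phi_\Sigma = 0$. Finally, decomposing $V = U + h$ with $U \in \mathcal{K} = \ker L$, we have $LV = LU + Lh = Lh$. Thus the lemma reduces to showing $\|R(V)\|_{C^{0,\alpha}} \leq C\|V\|_{C^{2,\alpha}}^2$.

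For the remainder estimate, I would use the integral form of Taylor's theorem, $R(V) = \int_0^1 (1-s)\frac{d^2}{ds^2}\phi_{sV}\,ds$, and bound $\frac{d^2}{ds^2}\phi_{sV}$ pointwise in $C^{0,\alpha}$. Because $C^{0,\alpha}(\Sigma)$ is a Banach algebra and each homogeneous degree-$k$ term in the expansion of $\mathcal{\varphi}$ is a $k$-linear form in the 2-jet with coefficients smooth on the compact $\Sigma$, each such term contributes at most $C_k \|V\|_{C^{2,\alpha}}^k$ to the $C^{0,\alpha}$ norm. Summing the (convergent, for $\|V\|_{C^{2,\alpha}} \leq \epsilon$ small) series over $k\geq 2$ yields $\|R(V)\|_{C^{0,\alpha}} \leq C\|V\|_{C^{2,\alpha}}^2$ as desired.

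The main obstacle is purely technical rather than conceptual: one must verify that $V \mapsto \phi_V$ is an analytic map from a $C^{2,\alpha}$-neighborhood of $0$ in the space of normal fields to $C^{0,\alpha}$, with uniformly bounded Taylor coefficients. This is standard for geometric quantities on a compact submanifold once $\|V\|_{C^{2,\alpha}}$ is small enough that the graph of $V$ is a genuine normal graph with positive Jacobian; after that, the estimate is a routine consequence of the multiplicative nature of the Hölder norm.
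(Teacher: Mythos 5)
Your proof is correct and follows essentially the same route as the paper: Taylor expand $\phi_{sV}$ in $s$, use $\phi_\Sigma = 0$ and $\mathcal{D}\mathcal{\varphi}(V) = LV = Lh$ (since $LU = 0$), and bound the quadratic remainder in $C^{0,\alpha}$ via the integral form of Taylor's theorem together with the Banach-algebra structure of H\"older spaces. (One small stylistic note: you mention both the integral remainder and summing a convergent power series over $k \geq 2$; only the former is needed, since a uniform bound on $\frac{d^2}{ds^2}\phi_{sV}$ already gives the $\|V\|_{C^{2,\alpha}}^2$ estimate.)
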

	\begin{proof}
		Define $\phi(s) = \phi_{sV} = \mathcal{\varphi}(x,sV,s\nabla V,s\nabla^2 V)$. Then $\phi^{(k)}(s) = (\mathcal{D}^k\mathcal{\varphi})_{sV} (V,\cdots,V)$, and it follows that $\|\phi^{(k)}(s)\|_{C^{0,\alpha}} \leq C \|V\|_{C^{2,\alpha}}^k$. Moreover note that $\phi(0)=0, \phi'(0) = LV= Lh$. Applying Taylor expansion about $s=0$ (using the integral form of the remainder) yields the result. 
	\end{proof}
	
	This first order expansion allows us to show that $h$ is of higher order than $U$: 
	
	\begin{proposition}
		\label{prop:ord1-est}
		There exist $C$ and $\epsilon$ so that if $V$ is a normal field with $\|V\|_{C^{2,\alpha}} \leq \epsilon$, and $h$ is as above, then 
		\begin{equation}
			\|V\|_{C^{2,\alpha}} \leq C(\|V\|_{C^{0,\alpha}} + \|\phi_V\|_{C^{0,\alpha}}),
		\end{equation}
		\begin{equation}
			\|h\|_{C^{2,\alpha}} \leq C(\|V\|_{C^{0,\alpha}}^2 + \|\phi_V\|_{C^{0,\alpha}}). 
		\end{equation}
	\end{proposition}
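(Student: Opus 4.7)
The plan is to combine the first-order Taylor expansion estimate of Lemma \ref{lem:ord1} with the Schauder estimate for $L$ (Lemma \ref{lem:schauder}) and the finite-dimensional projection bound (Lemma \ref{lem:projection}), and then absorb a quadratic error term using the smallness hypothesis $\|V\|_{C^{2,\alpha}} \leq \epsilon$.

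First I would apply Lemma \ref{lem:ord1} and the triangle inequality to obtain
\begin{equation*}
\|Lh\|_{C^{0,\alpha}} \leq \|\phi_V\|_{C^{0,\alpha}} + C\|V\|_{C^{2,\alpha}}^2.
\end{equation*}
Since $h \in \mathcal{K}^\perp$, we have $\pi_\mathcal{K}(h) = 0$, so Lemma \ref{lem:schauder} applied to $h$ yields
\begin{equation*}
\|h\|_{C^{2,\alpha}} \leq C\|Lh\|_{C^{0,\alpha}} \leq C\|\phi_V\|_{C^{0,\alpha}} + C\|V\|_{C^{2,\alpha}}^2.
\end{equation*}
On the other hand, $U = \pi_\mathcal{K}(V)$ lies in the finite dimensional space $\mathcal{K}$, so Lemma \ref{lem:projection} gives $\|U\|_{C^{2,\alpha}} \leq C\|V\|_{C^{0,\alpha}}$.

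Combining these via $V = U + h$ and the triangle inequality produces
\begin{equation*}
\|V\|_{C^{2,\alpha}} \leq C\|V\|_{C^{0,\alpha}} + C\|\phi_V\|_{C^{0,\alpha}} + C\|V\|_{C^{2,\alpha}}^2.
\end{equation*}
The smallness hypothesis implies $C\|V\|_{C^{2,\alpha}}^2 \leq C\epsilon\,\|V\|_{C^{2,\alpha}}$, so for $\epsilon$ small enough (say $C\epsilon \leq \tfrac12$) we absorb this term into the left-hand side, yielding the first claimed estimate.

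For the second estimate, I would feed the first estimate back into the bound on $\|h\|_{C^{2,\alpha}}$: squaring gives $\|V\|_{C^{2,\alpha}}^2 \leq C(\|V\|_{C^{0,\alpha}}^2 + \|\phi_V\|_{C^{0,\alpha}}^2)$, and invoking \eqref{eq:phiC0a} together with smallness of $\epsilon$ gives $\|\phi_V\|_{C^{0,\alpha}}^2 \leq C\epsilon\,\|\phi_V\|_{C^{0,\alpha}} \leq \|\phi_V\|_{C^{0,\alpha}}$. Plugging back produces $\|h\|_{C^{2,\alpha}} \leq C(\|V\|_{C^{0,\alpha}}^2 + \|\phi_V\|_{C^{0,\alpha}})$, as required. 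There is no serious obstacle in this proof; it is essentially an assembly of the preceding lemmas, with the only technical point being that $\epsilon$ must be chosen small enough to absorb the quadratic Taylor remainder.
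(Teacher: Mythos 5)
Your proof is correct and follows essentially the same route as the paper: decompose $V=U+h$, bound $\|U\|_{C^{2,\alpha}}$ via the finite-dimensional projection lemma, bound $\|h\|_{C^{2,\alpha}}$ via the Schauder estimate and the first-order Taylor remainder from Lemma~\ref{lem:ord1}, then absorb the quadratic term. The only minor difference is that you spell out the final absorption of $\|\phi_V\|_{C^{0,\alpha}}^2$ using \eqref{eq:phiC0a}, which the paper leaves implicit.
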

	\begin{proof}
		By the triangle inequality 
		$
		\|V\|_{C^{2,\alpha}} \leq \|U\|_{C^{2,\alpha}} + \|h\|_{C^{2,\alpha}},
		$
		and since $U =\pi_\mathcal{K} (V)$ by Lemma \ref{lem:projection} we have $\|U\|_{C^{2,\alpha}} \leq C \|V\|_{C^{0,\alpha}}$. Now by Lemmas \ref{lem:schauder} and \ref{lem:ord1}, we have \[\|h\|_{C^{2,\alpha}} \leq C\|Lh\|_{C^{0,\alpha}} \leq C(\|V\|_{C^{2,\alpha}}^2 + \|\phi_V\|_{C^{0,\alpha}}).\]
		
		Absorbing the higher power of $\|V\|_{C^{2,\alpha}}$ gives the estimate for $V$ and then absorbing the higher power of $\|\phi_V\|_{C^{0,\alpha}}$ gives the estimate for $h$. 
	\end{proof}

	\subsection{Second order expansion}
	
	Henceforth, we assume for simplicity that $V$ is orthogonal to rotations, so that $U\in \mathcal{K}_1$. 
	
	\begin{lemma}
		\label{lem:ord2}
		\begin{equation}
			\label{eq:ord2}
			\|\phi_V - Lh - \frac{1}{2}\mathcal{D}^2\mathcal{\varphi}(U,U) \|_{C^{0,\alpha}} \leq C( \|U\|_{C^{2,\alpha}} \|h\|_{C^{2,\alpha}} + \|h\|_{C^{2,\alpha}}^2 + \|V\|_{C^{2,\alpha}}^3).
		\end{equation}
	\end{lemma}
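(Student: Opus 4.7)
The plan is to mirror the proof of Lemma \ref{lem:ord1}, but truncating the Taylor expansion one order later and then expanding bilinearly in the decomposition $V=U+h$. Define $\phi(s)=\phi_{sV}=\mathcal{\varphi}(x, sV, s\nabla V, s\nabla^2 V)$. Since $\mathcal{\varphi}$ is analytic in its arguments, we have $\phi(0)=0$, $\phi'(0)=LV$, $\phi''(0)=\mathcal{D}^2\mathcal{\varphi}(V,V)$, and $\phi'''(s)=(\mathcal{D}^3\mathcal{\varphi})_{sV}(V,V,V)$ obeys
\[
\|\phi'''(s)\|_{C^{0,\alpha}} \leq C\|V\|_{C^{2,\alpha}}^3
\]
uniformly for $s\in[0,1]$, provided $\|V\|_{C^{2,\alpha}}\leq\epsilon$ is small. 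Taylor's theorem with integral remainder at order three then gives
\[
\phi_V \;=\; LV + \tfrac{1}{2}\mathcal{D}^2\mathcal{\varphi}(V,V) + R_3, \qquad \|R_3\|_{C^{0,\alpha}} \leq C\|V\|_{C^{2,\alpha}}^3.
\]

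Next, I would substitute $V=U+h$. Since $U\in\mathcal{K}=\ker L$ we have $LV=Lh$, and bilinearity of $\mathcal{D}^2\mathcal{\varphi}$ yields
\[
\mathcal{D}^2\mathcal{\varphi}(V,V) \;=\; \mathcal{D}^2\mathcal{\varphi}(U,U) + 2\,\mathcal{D}^2\mathcal{\varphi}(U,h) + \mathcal{D}^2\mathcal{\varphi}(h,h).
\]
Rearranging,
\[
\phi_V - Lh - \tfrac{1}{2}\mathcal{D}^2\mathcal{\varphi}(U,U) \;=\; \mathcal{D}^2\mathcal{\varphi}(U,h) + \tfrac{1}{2}\mathcal{D}^2\mathcal{\varphi}(h,h) + R_3.
\]

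Finally I would estimate each term. The operator $\mathcal{D}^2\mathcal{\varphi}$ at the basepoint is a continuous symmetric bilinear form from $C^{2,\alpha}\times C^{2,\alpha}$ into $C^{0,\alpha}$ (it involves only $V$, $\nabla V$, $\nabla^2 V$ evaluated at $s=0$ paired algebraically), so
\[
\|\mathcal{D}^2\mathcal{\varphi}(U,h)\|_{C^{0,\alpha}} \leq C\|U\|_{C^{2,\alpha}}\|h\|_{C^{2,\alpha}}, \qquad \|\mathcal{D}^2\mathcal{\varphi}(h,h)\|_{C^{0,\alpha}} \leq C\|h\|_{C^{2,\alpha}}^2.
\]
Combined with the $C\|V\|_{C^{2,\alpha}}^3$ bound on $R_3$, the triangle inequality gives \eqref{eq:ord2}.

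There is no real obstacle here; the argument is formal once one observes that the remainder in Taylor's theorem is controlled by the third derivative, and that $LU=0$ collapses the linear term to $Lh$. The only care needed is to ensure the $C^{0,\alpha}$-continuity of $\mathcal{D}^2\mathcal{\varphi}$ and $\mathcal{D}^3\mathcal{\varphi}$ on $C^{2,\alpha}$ fields, which follows from the explicit polynomial-in-derivatives expressions in Corollaries \ref{cor:d2phiN}, \ref{cor:d2phiN-pol} and Proposition \ref{prop:d3phi} together with the fact that $C^{0,\alpha}$ is a multiplicative algebra.
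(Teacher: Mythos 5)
Your proof is correct and takes essentially the same approach as the paper: Taylor expand $\phi_{sV}$ to second order with integral remainder (bounding the third-derivative remainder by $C\|V\|_{C^{2,\alpha}}^3$), use $LU=0$ to replace $LV$ with $Lh$, expand $\mathcal{D}^2\mathcal{\varphi}(V,V)$ bilinearly in $V=U+h$, and estimate the cross and quadratic terms by continuity of the bilinear form on $C^{2,\alpha}$. The paper's proof of Lemma \ref{lem:ord2} is just a one-line reference to the same strategy as Lemma \ref{lem:ord1} plus exactly the bilinear expansion you wrote.
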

	\begin{proof}
		Proceed by Taylor expansion as in Lemma \ref{lem:ord1}, noting that \[\phi''(0) = \mathcal{D}^2\mathcal{\varphi}(V,V) = \mathcal{D}^2\mathcal{\varphi}(U,U) + 2\mathcal{D}^2\mathcal{\varphi}(U,h) +\mathcal{D}^2\mathcal{\varphi}(h,h).\] 
	\end{proof}
	
	Now we decompose $h = \frac{1}{2}W+ h'$, where $W\in \mathcal{K}_1^\perp$ satisfies $LW = -\mathcal{D}^2\mathcal{\varphi}(U,U)$ as in Section \ref{sec:analysis}. The second order expansion shows that $h'$ is of still higher order:
	
	\begin{proposition}
		There exist $C$ and $\epsilon$ so that if $V$ is a normal field, orthogonal to rotations, with $\|V\|_{C^{2,\alpha}} \leq \epsilon$, and $h'$ is as above, then 
		\begin{equation}
			\|h'\|_{C^{2,\alpha}}
			\leq 
			C(\|V\|^3_{C^{0,\alpha}}
			+\|\phi_V\|_{C^{0,\alpha}}).
		\end{equation}
	\end{proposition}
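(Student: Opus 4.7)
The plan is to combine the second-order Taylor expansion (Lemma \ref{lem:ord2}) with the defining equation for $W$ to rewrite the leading terms on the left-hand side of \eqref{eq:ord2} as $Lh'$, then invoke the Schauder estimate. Writing $h=\frac{1}{2}W+h'$ and using $LW=-\mathcal{D}^2\mathcal{\varphi}(U,U)$ gives
\[ Lh+\tfrac{1}{2}\mathcal{D}^2\mathcal{\varphi}(U,U)=Lh', \]
so the triangle inequality and \eqref{eq:ord2} yield
\[ \|Lh'\|_{C^{0,\alpha}}\leq \|\phi_V\|_{C^{0,\alpha}}+C\bigl(\|U\|_{C^{2,\alpha}}\|h\|_{C^{2,\alpha}}+\|h\|_{C^{2,\alpha}}^2+\|V\|_{C^{2,\alpha}}^3\bigr). \]

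The next step is to bound the error terms using the first-order analysis of Section \ref{sec:ord1}. Proposition \ref{prop:ord1-est} gives $\|V\|_{C^{2,\alpha}}\leq C(\|V\|_{C^{0,\alpha}}+\|\phi_V\|_{C^{0,\alpha}})$ and $\|h\|_{C^{2,\alpha}}\leq C(\|V\|_{C^{0,\alpha}}^2+\|\phi_V\|_{C^{0,\alpha}})$, while $\|U\|_{C^{2,\alpha}}\leq C\|V\|_{C^{0,\alpha}}$ comes from Lemma \ref{lem:projection}. Substituting produces a sum of terms, each of which is either a power of $\|V\|_{C^{0,\alpha}}$ of order at least three or a term containing $\|\phi_V\|_{C^{0,\alpha}}$ multiplied by a quantity of size $O(\epsilon)$. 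Using $\|V\|_{C^{2,\alpha}}\leq \epsilon$ together with \eqref{eq:phiC0a}, all such mixed and higher-order pieces can be absorbed, leaving
\[ \|Lh'\|_{C^{0,\alpha}}\leq C\bigl(\|V\|_{C^{0,\alpha}}^3+\|\phi_V\|_{C^{0,\alpha}}\bigr). \]

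Finally, I would observe that $h'\in\mathcal{K}^\perp$: since $V$ is orthogonal to rotations, $h=V-U\in\mathcal{K}^\perp$, and by Lemma \ref{lem:LW} the particular $W$ constructed in Section \ref{sec:solW} lies in $\mathcal{K}^\perp$ as well, so $h'=h-\tfrac{1}{2}W\in\mathcal{K}^\perp$. Hence $\pi_{\mathcal{K}}(h')=0$ and the Schauder estimate (Lemma \ref{lem:schauder}) applied to $h'$ gives $\|h'\|_{C^{2,\alpha}}\leq C\|Lh'\|_{C^{0,\alpha}}$, which combined with the previous bound is exactly the desired inequality.

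The only genuinely nontrivial point is the absorption bookkeeping: one must verify that every cross-term produced after substituting the first-order bounds is dominated either by $\|V\|_{C^{0,\alpha}}^3$ (after pulling out an $\epsilon$ from an extra factor of $\|V\|_{C^{0,\alpha}}$) or by $\|\phi_V\|_{C^{0,\alpha}}$ (after pulling out an $\epsilon$ from an extra factor of $\|V\|_{C^{2,\alpha}}$ or $\|\phi_V\|_{C^{0,\alpha}}$). Aside from this, each step is a direct consequence of results already established.
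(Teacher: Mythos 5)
Your proposal is correct and takes essentially the same route as the paper: write the left side of \eqref{eq:ord2} as $\|\phi_V - Lh'\|_{C^{0,\alpha}}$ using the choice of $W$, apply the Schauder estimate (Lemma~\ref{lem:schauder}) to $h'$, bound the error terms via Proposition~\ref{prop:ord1-est} and Lemma~\ref{lem:projection}, and absorb the mixed/higher-order powers using Young's inequality and the smallness $\|V\|_{C^{2,\alpha}}\leq\epsilon$. You also correctly make explicit the detail (implicit in the paper) that $h'=h-\tfrac12 W\in\mathcal{K}^\perp$, which is what licenses applying the Schauder estimate with $\pi_{\mathcal{K}}(h')=0$.
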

	\begin{proof}
		By choice of $W$, the left hand side of (\ref{eq:ord2}) becomes $\|\phi_V - Lh'\|_{C^{0,\alpha}}$. So using Lemma \ref{lem:schauder} and the previous estimates, we have
		\begin{alignat*}{3}
			\|h'\|_{C^{2,\alpha}} &\leq{}&& \|Lh'\|_{C^{0,\alpha}} \leq \|\phi_V\|_{C^{0,\alpha}}+ \|\phi_V - Lh'\|_{C^{0,\alpha}} \\&\leq {}&&
			\|\phi_V\|_{C^{0,\alpha}}+ C\|V\|_{C^{0,\alpha}}(\|V\|_{C^{0,\alpha}}^2 + \|\phi_V\|_{C^{0,\alpha}}) \\&&&+C\left((\|V\|_{C^{0,\alpha}}^4 + \|\phi_V\|_{C^{0,\alpha}}^2) + (\|V\|_{C^{0,\alpha}}^3 + \|\phi_V\|_{C^{0,\alpha}}^3)\right).
		\end{alignat*}
		
		Estimating mixed terms using Young's inequality and absorbing higher powers gives the result.
	\end{proof}
	\subsection{Third order expansion}

	\begin{lemma}
		\label{lem:ord3}
		\begin{equation}
			\label{eq:ord3}
			\begin{split}
				\|\phi_V - Lh' +& \frac{1}{6}( \mathcal{D}^3\mathcal{\varphi}(U,U,U) + 3 \mathcal{D}^2\mathcal{\varphi}(U,W))) \|_{C^{0,\alpha}} \\&\leq C( \|U\|_{C^{2,\alpha}} \|h'\|_{C^{2,\alpha}}  + \|h'\|_{C^{2,\alpha}}^2 + \|W\|_{C^{2,\alpha}}^2 + \|h'\|_{C^{2,\alpha}} \|W\|_{C^{2,\alpha}})
				\\&\quad+C( \|U\|_{C^{2,\alpha}}^2 \|h\|_{C^{2,\alpha}}+\|U\|_{C^{2,\alpha}} \|h\|_{C^{2,\alpha}}^2 + \|h\|_{C^{2,\alpha}}^3 + \|V\|_{C^{2,\alpha}}^4).
			\end{split}
		\end{equation}
	\end{lemma}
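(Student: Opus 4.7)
The plan is to perform a careful third-order Taylor expansion of $\phi_{sV}$ in $s$ about $s=0$, extending the approach of Lemmas \ref{lem:ord1} and \ref{lem:ord2}. Setting $\phi(s) := \phi_{sV}$, we have $\phi(0)=0$ since $\Sigma$ is a shrinker, and $\phi^{(k)}(s) = (\mathcal{D}^k\mathcal{\varphi})_{sV}(V,\ldots,V)$. Analyticity of $\mathcal{\varphi}$ in the jet variables, together with the smallness hypothesis on $V$, yields $\|\phi^{(k)}(s)\|_{C^{0,\alpha}} \leq C\|V\|_{C^{2,\alpha}}^k$ uniformly for $s\in[0,1]$. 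Taylor's theorem with integral remainder then produces
\begin{equation*}
\phi_V = LV + \tfrac{1}{2}\mathcal{D}^2\mathcal{\varphi}(V,V) + \tfrac{1}{6}\mathcal{D}^3\mathcal{\varphi}(V,V,V) + R,
\end{equation*}
with $\|R\|_{C^{0,\alpha}} \leq C\|V\|_{C^{2,\alpha}}^4$.

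Next I would substitute $V = U+h$ and $h = \tfrac{1}{2}W + h'$. Since $LU = 0$ and $LW = -\mathcal{D}^2\mathcal{\varphi}(U,U)$, we get $LV = Lh' - \tfrac{1}{2}\mathcal{D}^2\mathcal{\varphi}(U,U)$. Multilinearly,
\begin{equation*}
\tfrac{1}{2}\mathcal{D}^2\mathcal{\varphi}(V,V) = \tfrac{1}{2}\mathcal{D}^2\mathcal{\varphi}(U,U) + \tfrac{1}{2}\mathcal{D}^2\mathcal{\varphi}(U,W) + \mathcal{D}^2\mathcal{\varphi}(U,h') + \tfrac{1}{2}\mathcal{D}^2\mathcal{\varphi}(h,h),
\end{equation*}
and the cubic term splits as
\begin{equation*}
\tfrac{1}{6}\mathcal{D}^3\mathcal{\varphi}(V,V,V) = \tfrac{1}{6}\mathcal{D}^3\mathcal{\varphi}(U,U,U) + \tfrac{1}{2}\mathcal{D}^3\mathcal{\varphi}(U,U,h) + \tfrac{1}{2}\mathcal{D}^3\mathcal{\varphi}(U,h,h) + \tfrac{1}{6}\mathcal{D}^3\mathcal{\varphi}(h,h,h).
\end{equation*}
The $\pm\tfrac{1}{2}\mathcal{D}^2\mathcal{\varphi}(U,U)$ contributions cancel, isolating precisely the designed principal part $Lh' + \tfrac{1}{6}(\mathcal{D}^3\mathcal{\varphi}(U,U,U) + 3\mathcal{D}^2\mathcal{\varphi}(U,W))$, along with the remaining cross-terms.

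Each residual term is then estimated in $C^{0,\alpha}$ via multilinearity of $\mathcal{D}^k\mathcal{\varphi}$ and the Banach-algebra property of $C^{2,\alpha}$ (for functions and tensors, with derivatives up to order $2$). Concretely, $\|\mathcal{D}^2\mathcal{\varphi}(U,h')\|_{C^{0,\alpha}} \leq C\|U\|_{C^{2,\alpha}}\|h'\|_{C^{2,\alpha}}$; substituting $h = h'+\tfrac{1}{2}W$ and expanding yields $\|\mathcal{D}^2\mathcal{\varphi}(h,h)\|_{C^{0,\alpha}} \leq C(\|W\|_{C^{2,\alpha}}^2 + \|h'\|_{C^{2,\alpha}}\|W\|_{C^{2,\alpha}} + \|h'\|_{C^{2,\alpha}}^2)$; and the cubic cross-terms $\mathcal{D}^3\mathcal{\varphi}(U,U,h)$, $\mathcal{D}^3\mathcal{\varphi}(U,h,h)$, $\mathcal{D}^3\mathcal{\varphi}(h,h,h)$ contribute $\|U\|_{C^{2,\alpha}}^2\|h\|_{C^{2,\alpha}}$, $\|U\|_{C^{2,\alpha}}\|h\|_{C^{2,\alpha}}^2$, and $\|h\|_{C^{2,\alpha}}^3$ respectively. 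Together with the Taylor remainder $\|R\|_{C^{0,\alpha}}\leq C\|V\|^4_{C^{2,\alpha}}$, these match the right-hand side of the claimed inequality (up to absorbing numerical constants into $C$).

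No conceptual obstacle is expected here; the argument is essentially bookkeeping, mirroring Lemmas \ref{lem:ord1} and \ref{lem:ord2}. The only point requiring care is systematically tracking every cross-term generated by expanding $\mathcal{D}^2\mathcal{\varphi}$ and $\mathcal{D}^3\mathcal{\varphi}$ in the decomposition $V = U + \tfrac{1}{2}W + h'$, and verifying that the signs line up so the designed cancellation $LW + \mathcal{D}^2\mathcal{\varphi}(U,U) = 0$ indeed removes the leading order-$2$ obstruction, exposing the order-$3$ principal part as isolated.
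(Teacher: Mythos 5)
Your proposal is correct and follows the same route as the paper: a third-order Taylor expansion of $\phi_{sV}$ in $s$, substitution of the decompositions $V = U + h$ and $h = \tfrac{1}{2}W + h'$, cancellation of the $\pm\tfrac{1}{2}\mathcal{D}^2\mathcal{\varphi}(U,U)$ contributions using $LW = -\mathcal{D}^2\mathcal{\varphi}(U,U)$, and term-by-term estimation of the residual cross-terms in $C^{0,\alpha}$ via the Banach-algebra behaviour of H\"older norms. One small remark: your computation yields $\phi_V = Lh' + \tfrac{1}{6}\bigl(\mathcal{D}^3\mathcal{\varphi}(U,U,U) + 3\mathcal{D}^2\mathcal{\varphi}(U,W)\bigr) + (\text{errors})$, so the small quantity is $\phi_V - Lh' - \tfrac{1}{6}(\cdots)$; the ``$+$'' sign in the displayed statement is evidently a typographical slip in the paper (compare Lemma \ref{lem:ord2}, where the analogous sign is ``$-$''), and your derivation implicitly corrects it.
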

	\begin{proof}
		Again proceed by Taylor expansion as in Lemma \ref{lem:ord1}, expanding out \[\phi'''(0) - \mathcal{D}^3\mathcal{\varphi}(U,U,U) = 3\mathcal{D}^3\mathcal{\varphi}(U,U,h) + 3\mathcal{D}^3\mathcal{\varphi}(U,h,h) + \mathcal{D}^3\mathcal{\varphi}(h,h,h) ,\] 
		\[ \phi''(0) -  \mathcal{D}^2\mathcal{\varphi}(U,U) - 2 \mathcal{D}^2\mathcal{\varphi}(U,W) = 2 \mathcal{D}^2\mathcal{\varphi}(U,h' ) +  \mathcal{D}^2\mathcal{\varphi}(h',h') +  \mathcal{D}^2\mathcal{\varphi}(W,W) + 2 \mathcal{D}^2\mathcal{\varphi}(h',W),\]  and using the choice of $W$ so that $Lh+\frac{1}{2}\mathcal{D}\mathcal{\varphi}(U,U) = Lh'$. 
	\end{proof}

	\begin{proof}[Proof of Theorem \ref{thm:quant-rigidity}]
		
		By elliptic theory (or integrating by parts) we see $\pi_{\mathcal{K}_1}(Lh')=0$. Then by Proposition \ref{prop:thirdvar} and the equivalence of norms on the finite dimensional space $\mathcal{K}_1$, it follows that
		\begin{equation}
			\|U\|_{C^{2,\alpha}}^3\leq C \|\pi_{\mathcal{K}_1}(-Lh'+\frac{1}{6}( \mathcal{D}^3\mathcal{\varphi}(U,U,U) + 3 \mathcal{D}^2\mathcal{\varphi}(U,W)))\|_{C^{2,\alpha}}.
		\end{equation}

		Applying Lemma \ref{lem:projection} to $ -Lh' + \frac{1}{6}( \mathcal{D}^3\mathcal{\varphi}(U,U,U) + 3 \mathcal{D}^2\mathcal{\varphi}(U,W))$, we then have
		\begin{equation} 
			\label{eq:U3}
			\begin{split}
				\frac{1}{C}\|U\|_{C^{2,\alpha}}^3 &\leq \| -Lh' + \frac{1}{6}( \mathcal{D}^3\mathcal{\varphi}(U,U,U) + 3 \mathcal{D}^2\mathcal{\varphi}(U,W))\|_{C^{0,\alpha}} \\&\leq \|\phi_V\|_{C^{0,\alpha}} + \| \phi_V -Lh' + \frac{1}{6}( \mathcal{D}^3\mathcal{\varphi}(U,U,U) + 3 \mathcal{D}^2\mathcal{\varphi}(U,W))\|_{C^{0,\alpha}}.
			\end{split} 
		\end{equation}
		
		We now bound the right hand side of (\ref{eq:ord3}), using the estimates from the first and second order expansions, as well as the Schauder estimate, to find
		\begin{equation}
			\|W\|_{C^{2,\alpha}} \leq C\|LW\|_{C^{0,\alpha}} \leq C' \|U\|_{C^{2,\alpha}}^2 \leq C''\|V\|^2_{C^{0,\alpha}}.
		\end{equation} 
		
		This gives
		
		\begin{alignat*}{3}&\|U\|_{C^{2,\alpha}} \|h'\|_{C^{2,\alpha}} && \leq C \|V\|_{C^{0,\alpha}} ( \|V\|^3_{C^{0,\alpha}}
		+\|\phi_V\|_{C^{0,\alpha}}),\\
		&\|h'\|^2_{C^{2,\alpha}} &&\leq C (\|V\|^6_{C^{0,\alpha}}
		+\|\phi_V\|^2_{C^{0,\alpha}}),\\
		&\|W\|_{C^{2,\alpha}}^2 &&\leq C\|V\|^4_{C^{0,\alpha}},\\
		&\|h'\|_{C^{2,\alpha}} \|W\|_{C^{2,\alpha}}&&\leq C\|V\|^2_{C^{0,\alpha}} (\|V\|^3_{C^{0,\alpha}}
		+\|\phi_V\|_{C^{0,\alpha}}),\\
		& \|U\|_{C^{2,\alpha}}^2 \|h\|_{C^{2,\alpha}} &&\leq C \|V\|_{C^{0,\alpha}}^2 (\|V\|^2_{C^{0,\alpha}}
		+\|\phi_V\|_{C^{0,\alpha}}),\\
		& \|U\|_{C^{2,\alpha}} \|h\|^2_{C^{2,\alpha}} &&\leq C \|V\|_{C^{0,\alpha}} (\|V\|^4_{C^{0,\alpha}}
		+\|\phi_V\|^2_{C^{0,\alpha}}),\\
		&\|h\|^3_{C^{2,\alpha}}&& \leq C(\|V\|^6_{C^{0,\alpha}}
		+\|\phi_V\|^3_{C^{0,\alpha}}),\\
		&\|V\|^4_{C^{2,\alpha}}&& \leq C(\|V\|^4_{C^{0,\alpha}}
		+\|\phi_V\|^4_{C^{0,\alpha}}).
		\end{alignat*}
		
		Using these to bound (\ref{eq:ord3}) and hence (\ref{eq:U3}), after absorbing higher powers we find that 
		
		\begin{equation}
			\|U\|_{C^{0,\alpha}}^3 \leq C(\|V\|_{C^{0,\alpha}}^4+\|\phi_V\|_{C^{0,\alpha}}). 
		\end{equation}
		
		Since $V = U+ \frac{1}{2}W + h'$, we then have 
		\begin{equation}
			\begin{split}
				\|V\|_{C^{2,\alpha}} & \leq  \|U\|_{C^{2,\alpha}} + \|W\|_{C^{2,\alpha}} + \|h'\|_{C^{2,\alpha}} \\& \leq C( \|V\|_{C^{0,\alpha}}^\frac{4}{3}+\|\phi_V\|^\frac{1}{3}_{C^{0,\alpha}} + \|V\|^2_{C^{0,\alpha}} +\|V\|^3_{C^{0,\alpha}} +\|\phi_V\|_{C^{0,\alpha}}).
			\end{split}
		\end{equation}
		
		Collecting higher powers of $\phi_V$ and absorbing all powers of $V$ into the left hand side then yields the desired estimate 
		\begin{equation}
			\|V\|_{C^{2,\alpha}} \leq C\|\phi_V\|^\frac{1}{3}_{C^{0,\alpha}}.
		\end{equation} 
		
	\end{proof}

	\subsection{Rotation and quantitative rigidity}
	
	We need the following rotation lemma:
	
	\begin{lemma}[c.f. Lemma 5.1 in \cite{ELS18}]\label{lem:modulo rotation}
		\label{lem:slice}
		Let $\Sigma^n \subset \mathbb{R}^N$ be a closed shrinker. For any $\epsilon>0$ there exists $\delta=\delta(\epsilon)>0$ such that if $\Sigma' \subset \mathbb{R}^N$ can be written as a graph of a vector field $V'$ on $\Sigma$ with $\|V'\|_{C^1}<\delta$, then there exist a rotation $\mathcal{R} \in \mathrm{SO}(N)$ and a $C^1$ vector field $V\in \mathcal{K}_0^\perp$ with $\|V\|_{C^1}<\epsilon$ such that $\mathcal{R}\cdot\Sigma'$ is the graph of $V$ over $\Sigma$.
	\end{lemma}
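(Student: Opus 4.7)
The plan is to apply the implicit function theorem to a slice map recording the $\mathcal{K}_0$-component of the graph vector field after rotating $\Sigma'$. Let $I := \{\mathcal{R} \in \mathrm{SO}(N) : \mathcal{R}\cdot\Sigma = \Sigma\}$ denote the isotropy subgroup of $\Sigma$, and fix a linear complement $\mathcal{M}$ of its Lie algebra in $\mathrm{Lie}(\mathrm{SO}(N))$. For $X \in \mathcal{M}$ of small norm and $V'$ with $\|V'\|_{C^1}$ small, the rotated graph $\exp(X) \cdot \Gr(V')$ lies inside a fixed tubular neighborhood of $\Sigma$, so it may be uniquely written as the graph of a normal vector field $V = V(X,V')$ on $\Sigma$, depending smoothly on $(X,V')$ via the nearest-point projection. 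Define
\[\Phi(X,V') := \pi_{\mathcal{K}_0}(V(X,V')) \in \mathcal{K}_0.\]

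Next, I would compute $D_X \Phi|_{(0,0)}$. The curve $\mathcal{R}_t = \exp(tX)$ acts on $\Sigma$ with initial velocity the restriction $X|_\Sigma$ (viewing $X$ as a Killing vector field on $\R^N$); its normal projection is by definition the rotational Jacobi field $V_X \in \mathcal{K}_0$. Hence $D_X\Phi|_{(0,0)}(X) = V_X$. By the choice of $\mathcal{M}$, this map from $\mathcal{M}$ to $\mathcal{K}_0$ has trivial kernel — any $X$ whose normal projection to $\Sigma$ vanishes generates a one-parameter subgroup preserving $\Sigma$, and hence lies in $\mathrm{Lie}(I)$ — and is surjective by the very definition of $\mathcal{K}_0$, so it is an isomorphism between finite-dimensional spaces.

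Since $\Phi(0,0)=0$, the implicit function theorem yields $\delta_1>0$ and a smooth map $V' \mapsto X(V') \in \mathcal{M}$ with $X(0)=0$ and $\Phi(X(V'), V') = 0$ whenever $\|V'\|_{C^1}<\delta_1$. Setting $\mathcal{R} = \exp(X(V'))$ and $V = V(X(V'), V')$, the submanifold $\mathcal{R}\cdot\Sigma'$ is the graph of $V$ over $\Sigma$ with $V \in \mathcal{K}_0^\perp$; by continuity of the assignment $(X,V') \mapsto V$, shrinking $\delta$ further ensures $\|V\|_{C^1} < \epsilon$.

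The main technical point — more a bookkeeping step than a genuine obstacle — is justifying smoothness of $(X,V') \mapsto V(X,V')$ between the appropriate $C^1$ spaces, which follows from the standard tubular neighborhood construction applied fibrewise once $\delta_0$ is small enough that $\exp(X)\cdot\Gr(V')$ lies entirely in a tube of uniform width about $\Sigma$. A secondary subtlety is the use of a linear complement $\mathcal{M}$ to $\mathrm{Lie}(I)$, which is needed so that the derivative in the group direction is an isomorphism rather than merely surjective — this is exactly what allows the implicit function theorem to apply in the form above.
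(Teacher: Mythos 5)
Your proof is correct and amounts to the same argument the paper makes: the paper simply invokes the slice theorem for the $\mathrm{SO}(N)$-action on $C^1$ submanifolds, noting that the tangent space to the orbit is $\mathcal{K}_0$. You have unpacked that slice theorem into an explicit implicit function theorem argument, with the one genuine refinement being the restriction to a linear complement $\mathcal{M}$ of $\mathrm{Lie}(I)$ so that $X \mapsto V_X$ is an isomorphism onto $\mathcal{K}_0$ (rather than merely surjective), which is exactly what makes the IFT applicable.
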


	\begin{proof}

		As in \cite{ELS18}, this is an application of the slice theorem for differentiable Lie group actions. Specifically, $G=\mathrm{SO}(N)$ acts on the space $\mathcal{M}$ of $C^1$-submanifolds of $\mathbb{R}^N$. The tangent space at $\Sigma$ is given by the space of $C^1$ sections of $N\Sigma$, and the tangent space to the orbit $G\cdot\Sigma$ is precisely the space $\mathcal{K}_0$ of rotational Jacobi fields. 
	\end{proof}
	
	(Note that when $\Sigma = \prod_{b=1}^B \mathbb{S}^{k_b}_{\sqrt{2k_b}}$, the stabiliser $G_\Sigma$ is the subgroup $\prod_{b=1}^B SO(k_b+1)$.) We also need the following lemma controlling graphs under rotation of the base:
	
	\begin{lemma}
		\label{lem:change-base}
		Let $\Sigma \subset \mathbb{R}^N$ be a closed submanifold. For any $\epsilon_0$ there exists $\epsilon'_0$ so that if $\Sigma' $ is the graph of $V'$ over $\Sigma$ and $\mathcal{R}\cdot \Sigma'$ is the graph of $V$ over $\Sigma$, with $\|V\|_{C^1}, \|V'\|_{C^{2,\alpha}} <\epsilon'_0$, then $\|V\|_{C^{2,\alpha}} <\epsilon_0$. 
	\end{lemma}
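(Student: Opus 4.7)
The plan is to argue by compactness, using that the map from a submanifold (lying in a small tubular neighbourhood of $\Sigma$) to its graph function over $\Sigma$ is continuous in the $C^{2,\alpha}$ topology, and exploiting the compactness of $\mathrm{SO}(N)$. This will allow us to transfer the $C^{2,\alpha}$ smallness of $V'$ into $C^{2,\alpha}$ smallness of $V$, provided the rotation is chosen so that $V$ is $C^1$-small.

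Specifically, suppose for contradiction that there is some $\epsilon_0>0$ and sequences $V_j'$ with $\|V_j'\|_{C^{2,\alpha}} \to 0$, rotations $\mathcal{R}_j \in \mathrm{SO}(N)$, and normal fields $V_j$ on $\Sigma$ with $\|V_j\|_{C^1} \to 0$, such that $\Sigma_j' = \mathrm{graph}_\Sigma(V_j')$ satisfies $\mathcal{R}_j\cdot\Sigma_j' = \mathrm{graph}_\Sigma(V_j)$ but $\|V_j\|_{C^{2,\alpha}} \geq \epsilon_0$. Since $\mathrm{SO}(N)$ is compact, after passing to a subsequence we may assume $\mathcal{R}_j \to \mathcal{R}_\infty$. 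Because $\|V_j'\|_{C^{2,\alpha}} \to 0$, the submanifolds $\Sigma_j'$ converge to $\Sigma$ in $C^{2,\alpha}$, and hence $\mathcal{R}_j \cdot \Sigma_j' \to \mathcal{R}_\infty \cdot \Sigma$ in $C^{2,\alpha}$. On the other hand, since $\|V_j\|_{C^1} \to 0$, the same sequence $\mathcal{R}_j \cdot \Sigma_j'$ converges to $\Sigma$ in $C^1$. By uniqueness of the limit, $\mathcal{R}_\infty \cdot \Sigma = \Sigma$, so in fact $\mathcal{R}_j\cdot\Sigma_j' \to \Sigma$ in $C^{2,\alpha}$.

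Finally, since $\Sigma$ is closed and smooth, the normal projection onto $\Sigma$ is a $C^\infty$-diffeomorphism on some tubular neighbourhood. Once $j$ is large enough, $\mathcal{R}_j\cdot\Sigma_j'$ lies inside this tubular neighbourhood, and the graph function $V_j$ is obtained by composing the embedding of $\mathcal{R}_j\cdot\Sigma_j'$ with this smooth projection. This composition depends continuously in $C^{2,\alpha}$ on the embedding, and it sends $\Sigma$ to $0$, so the $C^{2,\alpha}$ convergence $\mathcal{R}_j\cdot\Sigma_j' \to \Sigma$ forces $\|V_j\|_{C^{2,\alpha}} \to 0$, contradicting $\|V_j\|_{C^{2,\alpha}} \geq \epsilon_0$.

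The only technical subtlety is the last step, namely the continuity in $C^{2,\alpha}$ of the map (submanifold) $\mapsto$ (graph function over $\Sigma$). This is a standard consequence of the inverse function theorem applied to the smooth normal exponential map of $\Sigma$, together with the chain/product rule estimates for $C^{2,\alpha}$ norms; no additional geometric input beyond the smoothness and compactness of $\Sigma$ is required.
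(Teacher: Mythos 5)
Your argument is correct and rests on the same geometric insight as the paper's one-line sketch: the simultaneous $C^1$-smallness of $V$ and $V'$ forces the rotation $\mathcal{R}$ to be close to the stabiliser of $\Sigma$, after which the $C^{2,\alpha}$ bound transfers from $V'$ to $V$. The only presentational difference is that you phrase this as a compactness/contradiction argument, whereas the paper gestures at a direct quantitative estimate on $\mathcal{R}$; since the lemma asserts no explicit rate, either formulation suffices.
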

	\begin{proof}
		The $C^1$ bounds on both $V,V'$ control the size of the rotation $\mathcal{R}$. This allows us to transfer the $C^{2,\alpha}$ bounds to $V$. 
	\end{proof}
	
	We are now able to prove the quantitative rigidity theorem: 
	
	\begin{proof}[Proof of Theorem \ref{thm:quant-rigidity-intro}, H\"{o}lder estimate]
		Let $\epsilon_0$ be as in Theorem \ref{thm:quant-rigidity}. Take $\epsilon'_0$ as in Lemma \ref{lem:change-base} and $\delta=\delta(\epsilon'_0)$ as in Lemma \ref{lem:slice}. 
		
		By supposition $\Sigma'$ may be written as the normal graph of a vector field $V'$ over $\Sigma$ with $\|V'\|_{C^{2,\alpha}} \leq \epsilon$. Then Lemma \ref{lem:slice} gives a rotation $\mathcal{R}$ such that $\mathcal{R}\cdot \Sigma'$ is the graph of some $V\in \mathcal{K}_0^\perp$ with $\|V\|_{C^1} <\epsilon_1$, and hence $\|V\|_{C^{2,\alpha}} <\epsilon_0$. We conclude from Theorem \ref{thm:quant-rigidity} that \[\|V\|_{C^{2,\alpha}} \leq C\|\phi_V\|_{C^{0,\alpha}}^\frac{1}{3}.\] \end{proof}
	
	\section{Sobolev theory}\label{sec:sobolev}
	
	In this section we complete the proof of Theorem \ref{thm:quant-rigidity-intro} by Taylor expansion with Sobolev norms. The analysis formally mirrors the H\"{o}lder analysis in Section \ref{sec:quant-rigidity}, but the main difficulty is that Taylor expansion leads to higher powers of our quantities, so we need to iteratively control them in $W^{2,p}$ where $p$ increases with the order of expansion. It will thus be crucial that we have already proven $C^{2,\alpha}$ estimates in Section \ref{sec:quant-rigidity}, which will allow us to absorb terms of excess degree. At the final, obstructed order, we perform the analysis in $L^1$ to keep the degree below what we have already estimated. The main estimate is:
	
	\begin{theorem}
		\label{thm:quant-rigidity-L2}
		Let $\Sigma = \mathbb{S}^{k_1}_{\sqrt{2k_1}} \times \mathbb{S}^{k_2}_{\sqrt{2k_2}}$. There exists $\epsilon_0 >0$ such that if $V \in \mathcal{K}_0^\perp$ and $\|V\|_{C^{2,\alpha}} \leq \epsilon_0$, then 	\begin{equation}
			\|V\|_{L^2}^3
			\leq \|\phi_V\|_{L^2}.
		\end{equation}
	\end{theorem}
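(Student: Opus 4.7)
The plan is to mirror the H\"older analysis of Section \ref{sec:quant-rigidity} in Sobolev norms, crucially exploiting the $C^{2,\alpha}$ estimates already established in Theorem \ref{thm:quant-rigidity}. The key observation is that in any nonlinear term arising from Taylor expansion of $\mathcal{\varphi}$, we may keep one factor in a Sobolev norm and place all remaining factors in $C^{2,\alpha}$, thereby pulling $C^{2,\alpha}$-smallness out as a prefactor. Combined with a Sobolev version of the Schauder estimate ($\|h\|_{W^{2,p}} \leq C\|Lh\|_{L^p}$ for $h\in\mathcal{K}^\perp$), the finite-dimensional norm equivalence $\|U\|_{W^{2,p}} \leq C\|U\|_{L^2}$ on the Jacobi space, and the H\"older estimate of Theorem \ref{thm:quant-rigidity}, this lets us iterate the expansion scheme of Section \ref{sec:quant-rigidity}.

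First I would establish a first-order Sobolev expansion of the form
\begin{equation}
\|\phi_V - Lh\|_{L^p} \leq C\|V\|_{C^{2,\alpha}} \|V\|_{W^{2,p}},
\end{equation}
yielding, via Schauder and absorption, $\|h\|_{W^{2,p}} \leq C(\|V\|_{C^{2,\alpha}}\|V\|_{L^p} + \|\phi_V\|_{L^p})$. Proceeding to second order with $h = \tfrac{1}{2}W + h'$ and the choice $LW = -\mathcal{D}^2\mathcal{\varphi}(U,U)$, one obtains $\|W\|_{W^{2,p}} \leq C\|U\|_{C^0}\|U\|_{L^p}$ (since $W$ is quadratic in $U$), and hence
\begin{equation}
\|h'\|_{W^{2,p}} \leq C\bigl(\|V\|_{C^{2,\alpha}}^2 \|V\|_{L^p} + \|\phi_V\|_{L^p}\bigr).
\end{equation}
Taking $p=2$ gives enough control on all non-Jacobi pieces to set up the obstructed step.

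For the third, obstructed order I would switch to $L^1$ to keep the degrees of the Sobolev factors at or below two. Since $\pi_{\mathcal{K}_1}(Lh')=0$, Proposition \ref{prop:thirdvar} together with the equivalence of $L^1$ and $L^2$ norms on the finite-dimensional space $\mathcal{K}_1$ yields
\begin{equation}
\|U\|_{L^2}^3 \leq C\bigl\|\pi_{\mathcal{K}_1}\bigl(\mathcal{D}^3\mathcal{\varphi}(U,U,U) + 3\mathcal{D}^2\mathcal{\varphi}(U,W)\bigr)\bigr\|_{L^1}.
\end{equation}
An $L^1$ analog of Lemma \ref{lem:ord3} then bounds the residual by products such as $\|V\|_{C^{2,\alpha}} \|V\|_{L^2}^2$, $\|U\|_{L^2}\|h'\|_{L^2}$, $\|W\|_{L^2}^2$ and $\|h\|_{L^2}^3$ (the last being handled via $\|h\|_{L^2}^2 \|h\|_{C^0}$), each of which is controlled using the $p=2$ Sobolev estimates above together with the H\"older bound $\|V\|_{C^{2,\alpha}} \leq C\|\phi_V\|_{C^{0,\alpha}}^{1/3}$ from Theorem \ref{thm:quant-rigidity}. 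Using $\|\phi_V\|_{L^1} \leq C\|\phi_V\|_{L^2}$ and absorbing higher powers then gives $\|U\|_{L^2}^3 \leq C\|\phi_V\|_{L^2}$. Since $V = U + \tfrac{1}{2}W + h'$, and the $W,h'$ contributions are of higher order by the previous steps, one concludes $\|V\|_{L^2}^3 \leq C\|\phi_V\|_{L^2}$.

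The main obstacle is the uniform bookkeeping of degrees: one must verify that every product arising after Taylor expansion at each order can be decomposed into one Sobolev factor of the degree we currently control plus a bounded number of factors already small in $C^{2,\alpha}$. Working in $L^1$ at the obstructed order, rather than pushing to $W^{2,p}$ for large $p$, is precisely what closes this bookkeeping --- it converts each cubic residual into an $L^2$-$L^2$ (or $C^0$-$L^1$) pairing that the $p=2$ Sobolev estimates of the first two steps already bound.
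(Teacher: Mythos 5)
Your proposal follows the same high-level architecture as the paper --- decompose $V=U+h=U+\tfrac12 W+h'$, control $h,h'$ by first and second order Taylor expansion, then integrate the obstructed third-order expansion in $L^1$ and use Lemma~\ref{lem:proj-fin-dim} together with Proposition~\ref{prop:thirdvar}. But the central technical device you propose for controlling nonlinear error terms, namely ``keep one factor in a Sobolev norm and place all remaining factors in $C^{2,\alpha}$,'' does not close the argument, and this is a genuine gap.

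The difficulty shows up at the obstructed order. The fourth-order Taylor remainder contributes a term of size $\int_\Sigma |V|_2^4$, and this needs to be dominated by $\|\phi_V\|_{L^2}$ plus a strictly higher power of $\|V\|_{L^2}$ so that it can be absorbed into $\|V\|_{L^2}^3$. The paper achieves this by writing $\int_\Sigma |V|_2^4 = \|V\|_{W^{2,4}}^4$ and proving $\|V\|_{W^{2,4}}\leq C(\|V\|_{L^2}+\|\phi_V\|_{L^2})$ via a pointwise absorbing inequality of the form $|V|_2^4\leq 4|U|_2^4+4\epsilon_0^2|h|_2^2$ combined with the $W^{2,2}$ estimate on $h$; the iterated $W^{2,2p}$ bounds in Propositions~\ref{prop:ord1-est-L2} and~\ref{prop:ord2-est-L2} carry no $C^{2,\alpha}$ prefactors, which is crucial. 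Your factorization instead yields $\int_\Sigma |V|_2^4\leq \|V\|_{C^{2,\alpha}}^2\|V\|_{W^{2,2}}^2\lesssim \|V\|_{C^{2,\alpha}}^2\|V\|_{L^2}^2 + \|V\|_{C^{2,\alpha}}^2\|\phi_V\|_{L^2}^2$. The second term is fine, but the first term $\|V\|_{C^{2,\alpha}}^2\|V\|_{L^2}^2$ cannot be controlled: it is bounded neither by $\|V\|_{L^2}^4$ (that would require $\|V\|_{C^{2,\alpha}}\lesssim\|V\|_{L^2}$, which is the wrong direction) nor by $\|\phi_V\|_{L^2}$ (using the H\"older bound only gives $\|V\|_{C^{2,\alpha}}^2\|V\|_{L^2}^2\lesssim \|\phi_V\|_{C^{0,\alpha}}^{4/3}$, and $\|\phi_V\|_{C^{0,\alpha}}$ may be much larger than $\|\phi_V\|_{L^2}$). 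The same issue contaminates your first-order estimate $\|h\|_{W^{2,2}}\leq C(\|V\|_{C^{2,\alpha}}\|V\|_{L^2}+\|\phi_V\|_{L^2})$: the paper's sharper bound $\|h\|_{W^{2,2}}\leq C(\|V\|_{L^2}^2+\|\phi_V\|_{L^2})$, obtained through the $W^{2,4}$ intermediary and the pointwise absorbing trick, is exactly what is needed and is strictly stronger than yours. In short, the $C^{2,\alpha}$ smallness is not quantitatively linked to $\|V\|_{L^2}$ or $\|\phi_V\|_{L^2}$, so pulling it out as a prefactor loses the $L^2$-to-$L^2$ closure; you would need to replace this device with the paper's iterated $W^{2,2p}$ bootstrap.
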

	
	We continue with the notation of Section \ref{sec:quant-rigidity}. Specifically, $\Sigma= \mathbb{S}^{k_1}_{\sqrt{2k_1}} \times \mathbb{S}^{k_2}_{\sqrt{2k_2}}$ and $V$ is a normal vector field on $\Sigma$ with $\|V\|_{C^{2,\alpha}} <\epsilon_0$. Additionally, we define $|V|_k := \sum_{j\leq k} |\nabla^j V|$ so that $\int_\Sigma |V|_k^p = \|V\|_{W^{k,p}}^p$. 
	
	We need the following elliptic estimate:
	
	\begin{lemma}[$W^{2,2}$ elliptic estimate for $L$]
		\label{lem:ellipticW22}
		There exists $C$ so that if $V$ is normal and $U=\pi_\mathcal{K}(V)$, where $\mathcal{K}=\ker L$ is the space of Jacobi fields, then
		\begin{equation}
			\|V-U\|_{W^{2,2}}\leq C\|LV\|_{L^2}.
		\end{equation}
	\end{lemma}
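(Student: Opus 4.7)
The plan is to exploit the fact that $L$ is a self-adjoint elliptic operator on a closed manifold, so standard Fredholm and elliptic regularity theory apply. Since $LU = 0$ by definition of $\mathcal{K} = \ker L$, we have $L(V-U) = LV$, and $V - U \in \mathcal{K}^\perp$ by construction. So the content of the lemma is an $L^2$ coercivity estimate for $L$ on $\mathcal{K}^\perp$, combined with $W^{2,2}$ elliptic regularity.

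First, I would establish the $L^2$ bound $\|V-U\|_{L^2} \leq C\|LV\|_{L^2}$. Using the explicit form $L = \Delta + \frac{1}{2} + \frac{1}{2}\sum_b \Pi_{N_b}$ from equation \eqref{eq:L on product}, we see $L$ is a zeroth-order perturbation of the (normal bundle) Laplacian on the closed manifold $\Sigma$, hence self-adjoint with respect to the $L^2$ inner product and elliptic. By standard spectral theory for such operators on a closed manifold, $L$ has discrete real spectrum accumulating only at $+\infty$, with finite-dimensional eigenspaces. Self-adjointness gives $\mathrm{Im}(L) = (\ker L)^\perp = \mathcal{K}^\perp$. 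Writing $V - U = \sum_{\lambda \neq 0} c_\lambda \psi_\lambda$ in an $L^2$-orthonormal eigenbasis, we obtain
\begin{equation}
\|V-U\|_{L^2}^2 = \sum_{\lambda \neq 0} c_\lambda^2 \leq \lambda_*^{-2} \sum_{\lambda \neq 0} \lambda^2 c_\lambda^2 = \lambda_*^{-2} \|L(V-U)\|_{L^2}^2 = \lambda_*^{-2} \|LV\|_{L^2}^2,
\end{equation}
where $\lambda_*>0$ is the smallest nonzero eigenvalue in absolute value.

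Next, to upgrade to $W^{2,2}$ I would invoke the standard interior (hence global, since $\Sigma$ is closed) Schauder/Calderón--Zygmund estimate for the elliptic operator $L$ acting on sections of the normal bundle: there exists $C$ so that for any normal section $\Phi$,
\begin{equation}
\|\Phi\|_{W^{2,2}} \leq C\bigl(\|L\Phi\|_{L^2} + \|\Phi\|_{L^2}\bigr).
\end{equation}
Applying this with $\Phi = V - U$ and combining with the previous $L^2$ bound yields $\|V-U\|_{W^{2,2}} \leq C\|LV\|_{L^2}$, as desired.

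There is no real obstacle here; the only mild subtlety is to note that $L$ acts on sections of a (trivial on each factor) vector bundle rather than on scalar functions, but since the principal part is still $\Delta$ and the zeroth-order term $\tfrac{1}{2} + \tfrac{1}{2}\sum_b \Pi_{N_b}$ is a bounded symmetric endomorphism, both the spectral decomposition and the elliptic regularity estimate go through verbatim. The constant $C$ depends only on $\Sigma$ through $\lambda_*$ and the usual elliptic constants.
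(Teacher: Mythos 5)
Your proof is correct. The paper states Lemma \ref{lem:ellipticW22} without proof, treating it as a standard fact; your argument—spectral gap coercivity on $\mathcal{K}^\perp$ combined with the global $W^{2,2}$ elliptic estimate for the self-adjoint operator $L = \Delta + \tfrac12 + \tfrac12\sum_b \Pi_{N_b}$ on a closed manifold—is precisely the standard way to justify it, and all the steps (self-adjointness, discrete spectrum, $L(V-U)=LV$, $V-U\in\mathcal{K}^\perp$) are accurate for this setting.
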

	
	\subsection{First order expansion}
	
	As above decompose $V= U+h$ where $U =\pi_\mathcal{K}(V)$ and $h\in \mathcal{K}^\perp$. By Taylor expansion as in Lemma \ref{lem:ord1}, we have $|\phi_V- Lh| \leq C|V|_2^2$. Integrating, we then have
	\begin{equation}
		\label{eq:ord1-L2}
		\|\phi_V - Lh\|_{L^2} \leq C \|V\|_{W^{2,4}}^2.
	\end{equation}
	
	\begin{proposition}
		\label{prop:ord1-est-L2}
		There exist $C$ and $\epsilon$ so that if $V$ is a normal field with $\|V\|_{C^{2,\alpha}} \leq \epsilon$, and $h$ is as above, then 
		\begin{equation}
			\|h\|_{W^{2,2}} \leq C(\|V\|_{L^2}^2 + \|\phi_V\|_{L^2}),
		\end{equation}
	\end{proposition}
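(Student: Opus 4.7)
The plan is to parallel the first-order Hölder argument of Proposition \ref{prop:ord1-est}, substituting the $W^{2,2}$ elliptic estimate of Lemma \ref{lem:ellipticW22} for Schauder and the $L^2$ Taylor bound \eqref{eq:ord1-L2} for its $C^{0,\alpha}$ counterpart. Writing $V = U + h$ with $U = \pi_\mathcal{K}(V)$ and $h \in \mathcal{K}^\perp$, these immediately combine to give
\begin{equation*}
\|h\|_{W^{2,2}} \leq C \|Lh\|_{L^2} \leq C\bigl(\|\phi_V\|_{L^2} + \|V\|_{W^{2,4}}^2\bigr),
\end{equation*}
so the task reduces to controlling the quadratic remainder $\|V\|_{W^{2,4}}^2$ by $\|V\|_{L^2}^2$ modulo a term that can be absorbed into $\|h\|_{W^{2,2}}$.

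The key step is to estimate $\|V\|_{W^{2,4}}^4 = \int_\Sigma |V|_2^4$ via the pointwise split $|V|_2 \leq |U|_2 + |h|_2$, treating the two pieces asymmetrically. Finite-dimensionality of $\mathcal{K}$ gives $\|U\|_{C^2} \leq C\|U\|_{L^2} \leq C\|V\|_{L^2}$, and hence
\begin{equation*}
\int_\Sigma |U|_2^4 \leq \|U\|_{C^2}^2 \int_\Sigma |U|_2^2 \leq C \|V\|_{L^2}^4.
\end{equation*}
On the other hand, since $\Sigma$ is compact, $\|V\|_{L^2} \leq C\|V\|_{C^{2,\alpha}} \leq C\epsilon_0$, so $\|h\|_{C^2} \leq \|V\|_{C^2} + \|U\|_{C^2} \leq C\epsilon_0$ and
\begin{equation*}
\int_\Sigma |h|_2^4 \leq \|h\|_{C^2}^2 \int_\Sigma |h|_2^2 \leq C \epsilon_0^2 \|h\|_{W^{2,2}}^2.
\end{equation*}
Taking square roots and feeding back yields
\begin{equation*}
\|h\|_{W^{2,2}} \leq C\bigl(\|\phi_V\|_{L^2} + \|V\|_{L^2}^2 + \epsilon_0 \|h\|_{W^{2,2}}\bigr),
\end{equation*}
which upon choosing $\epsilon_0$ small enough absorbs the last term into the left-hand side.

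The essential difficulty, as flagged in the introduction to this section, is that Taylor expansion forces our first-order bound's right-hand side to control a higher-exponent norm ($W^{2,4}$) than the conclusion ($L^2$). The dichotomy between the Jacobi part $U$ and its complement $h$ resolves this at first order: on the finite-dimensional $\mathcal{K}$ we upgrade $L^2$ to $C^2$ at no cost, while on $\mathcal{K}^\perp$ the smallness of $\epsilon_0$ lets us absorb. The same pattern will presumably recur at second and third order, where the previously established $C^{2,\alpha}$ bounds will be required to control the still higher nonlinear terms produced by further expansion.
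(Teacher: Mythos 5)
Your proposal is correct and mirrors the paper's proof: both start from the $W^{2,2}$ elliptic estimate plus the $L^2$ Taylor remainder, then bound $\|V\|_{W^{2,4}}^2$ by splitting $V=U+h$, upgrading to $C^2$ for free on the finite-dimensional $U$-part, and absorbing the $h$-part via the $C^2$-smallness $\|h\|_{C^2}\leq C\epsilon_0$. The only cosmetic difference is that you split $|V|_2^4 \leq 8(|U|_2^4 + |h|_2^4)$ pointwise, while the paper writes $|V|_2^4 \leq 2|V|_2^2(|U|_2^2+|h|_2^2)$ and absorbs the $|V|_2^4$ self-term with Young's inequality; both routes land on the same bound.
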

	\begin{equation}
		\|V\|_{W^{2,4}} \leq C(\|V\|_{L^2} + \|\phi_V\|_{L^2}). 
	\end{equation}
	
	\begin{proof}
		By Lemma \ref{lem:ellipticW22} and (\ref{eq:ord1-L2}) we have \begin{equation}\label{eq:ord1-est-L2-1}\|h\|_{W^{2,2}} \leq C \|Lh\|_{L^2} \leq C'( \|\phi_V\|_{L^2} + \|V\|_{W^{2,4}}^2).\end{equation} 
		Now by the triangle inequality we have \begin{equation}|V|_2^4 \leq 2|V|_2^2(|U|_2^2 + |h|_2^2) \leq 4(|U|_2^4 + \epsilon_0^2 |h|_2^2),\end{equation} where we have used the absorbing inequality $2ab \leq \frac{1}{2}a^2 + 2b^2$ for the first term and the $C^2$ bound on $V$ for the second. By the equivalence of norms on the finite dimensional space $\mathcal{K}$,  \begin{equation}\label{eq:U2-est} \|U\|_{C^{2,\alpha}} \leq C\|U\|_{L^2} \leq C'\|V\|_{L^2}, \end{equation}
		and hence $\|U\|_{W^{2,4}} \leq C\|U\|_{L^2} \leq C\|V\|_{L^2}$. Therefore
		\begin{equation}\label{eq:ord1-est-L2-3} \|V\|_{W^{2,4}}^2 \leq C\|V\|_{L^2}^2 + 4\epsilon_0 \|h\|_{W^{2,2}}.\end{equation}
		Substituting (\ref{eq:ord1-est-L2-3}) into (\ref{eq:ord1-est-L2-1}) and absorbing the $\epsilon_0$ term gives the estimate for $\|h\|_{W^{2,2}}$ and consequently for $\|V\|_{W^{2,4}}$. 
	\end{proof}
	
	\subsection{Second order expansion}
	
	In the remainder of this section, we assume $V$ is orthogonal to rotations, so $U\in \mathcal{K}_1$. By (\ref{eq:phiC0a}) and the $C^{2,\alpha}$ estimate of Proposition \ref{prop:ord1-est}, for any $\epsilon_1>0$ we will have $\|h\|_{C^{2,\alpha}} <\epsilon_1$ so long as $\epsilon_0$ is small enough. 
	
	As before we decompose $h = \frac{1}{2}W+ h'$, where $W\in \mathcal{K}_1^\perp$ satisfies $LW = -\mathcal{D}^2\mathcal{\varphi}(U,U)$. By Taylor expansion as in Lemma \ref{lem:ord2} and integrating, we have
	\begin{equation}
		\label{eq:ord2-L2}
		\|\phi_V - Lh' \|_{L^2} \leq C( \| |U|_2^2 |h|_2^2 \|_{L^2} +\|h\|_{W^{2,4}}^2 + \|V\|_{W^{2,6}}^3).
	\end{equation}
	
	\begin{proposition}
		\label{prop:ord2-est-L2}
		There exist $C$ and $\epsilon_0$ so that if $V$ is a normal field, orthogonal to rotations, with $\|V\|_{C^{2,\alpha}} \leq \epsilon_0$, and $h'$ is as above, then 
		\begin{equation}
			\|h'\|_{W^{2,2}}
			\leq 
			C(\|V\|^3_{L^2}
			+\|\phi_V\|_{L^2}),
		\end{equation}
		
		\begin{equation}
			\|h\|_{W^{2,4}}
			\leq 
			C(\|V\|^2_{L^2}
			+\|\phi_V\|_{L^2}),
		\end{equation}
		
		\begin{equation}
			\|V\|_{W^{2,6}}
			\leq 
			C(\|V\|_{L^2}
			+\|\phi_V\|_{L^2}).
		\end{equation}
		
	\end{proposition}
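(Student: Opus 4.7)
The plan is to mirror the bootstrap strategy of Proposition \ref{prop:ord1-est-L2}, now at second order, estimating the three quantities in tandem. First I would apply the $W^{2,2}$ elliptic estimate of Lemma \ref{lem:ellipticW22} to $h' \in \mathcal{K}_1^\perp$, obtaining $\|h'\|_{W^{2,2}} \leq C\|\phi_V\|_{L^2} + C\|\phi_V - Lh'\|_{L^2}$, where by the defining equation $LW = -\mathcal{D}^2\mathcal{\varphi}(U,U)$ together with \eqref{eq:ord2-L2} the remainder is controlled by the bilinear quantity $\||U|_2|h|_2\|_{L^2}$ (modulo the apparent typo), by $\|h\|_{W^{2,4}}^2$, and by $\|V\|_{W^{2,6}}^3$. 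The goal is to push each of these into the desired form $C(\|V\|_{L^2}^3 + \|\phi_V\|_{L^2})$.

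For the bilinear contribution, I would use $\||U|_2|h|_2\|_{L^2}\leq \|U\|_{L^\infty}\|h\|_{W^{2,2}}$, applying $\|U\|_{L^\infty}\leq C\|V\|_{L^2}$ from finite dimensionality (Lemma \ref{lem:projection}) together with the first-order bound $\|h\|_{W^{2,2}} \leq C(\|V\|_{L^2}^2 + \|\phi_V\|_{L^2})$. For the higher Sobolev remainder terms, the key technical tool is the $L^\infty$--$L^2$ interpolation $\|f\|_{W^{2,2p}}^p \leq \|f\|_{C^2}^{p-1}\|f\|_{W^{2,2}}$, which yields $\|h\|_{W^{2,4}}^2 \leq \|h\|_{C^2}\|h\|_{W^{2,2}}$ and $\|V\|_{W^{2,6}}^3 \leq \|V\|_{C^2}^2\|V\|_{W^{2,2}}$. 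Smallness of $\|V\|_{C^{2,\alpha}}\leq \epsilon_0$ (and hence of $\|h\|_{C^{2,\alpha}}$, by Proposition \ref{prop:ord1-est}), combined with the first-order bound $\|V\|_{W^{2,2}}\leq C(\|V\|_{L^2}+\|\phi_V\|_{L^2})$ (obtained from $V=U+h$, finite dimensionality of $\mathcal{K}$, and the previous step), closes the estimate for $\|h'\|_{W^{2,2}}$ after applying Young's inequality to absorb mixed products like $\|V\|_{L^2}\|\phi_V\|_{L^2}$ into $\|V\|_{L^2}^3 + \|\phi_V\|_{L^2}$ using smallness of each factor.

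For the $W^{2,4}$ bound, I would decompose $h = \frac{1}{2}W + h'$. Using the $L^4$ elliptic estimate on $LW = -\mathcal{D}^2\mathcal{\varphi}(U,U)$, together with $\|U\|_{C^2}\leq C\|V\|_{L^2}$ from finite dimensionality, gives $\|W\|_{W^{2,4}} \leq C\|V\|_{L^2}^2$; and $\|h'\|_{W^{2,4}}$ is controlled via the same interpolation between the just-proved $W^{2,2}$ bound and the $C^{2,\alpha}$ bound on $h'$. The $W^{2,6}$ bound on $V$ is analogous via $V = U + \frac{1}{2}W + h'$, now using $\|U\|_{W^{2,6}} \leq C\|V\|_{L^2}$.

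The main obstacle I anticipate is arranging the bootstrap so that the powers of $\|V\|_{L^2}$ and $\|\phi_V\|_{L^2}$ end up with the stated exponents $3$, $2$, and $1$ respectively. The decisive ingredient is that the already-established $C^{2,\alpha}$ estimates of Section \ref{sec:quant-rigidity} furnish the $\epsilon_0$-small prefactors needed to absorb the $L^{\infty}$ factors in the Taylor remainder terms, converting them back to $\|V\|_{W^{2,2}}$, which the first-order $L^2$ theory controls. Without this prior Hölder control, the interpolation step would not give the correct $L^2$ powers on the right-hand side.
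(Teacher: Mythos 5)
Your overall skeleton — elliptic estimate for $L$, Taylor remainder bound from Lemma \ref{lem:ord2}, then term-by-term control — matches the paper's Proposition \ref{prop:ord2-est-L2}, and your interpolation lemma $\|f\|_{W^{2,2p}}^p\leq\|f\|_{C^2}^{p-1}\|f\|_{W^{2,2}}$ is exactly the kind of Hölder-to-Sobolev trade the paper uses. However, there is a genuine gap in the way you apply it. You interpolate $\|h\|_{W^{2,4}}^2\leq\|h\|_{C^2}\|h\|_{W^{2,2}}$ and $\|V\|_{W^{2,6}}^3\leq\|V\|_{C^2}^2\|V\|_{W^{2,2}}$ directly on $h$ and $V$, and then use $\|h\|_{C^2}\leq\epsilon_1$, $\|V\|_{C^2}\leq\epsilon_0$ from the Hölder bounds. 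Together with the first-order $L^2$ bounds this produces terms of the form $\epsilon_1\|V\|_{L^2}^2$ and $\epsilon_0^2\|V\|_{L^2}$ on the right-hand side of the $\|h'\|_{W^{2,2}}$ estimate. These cannot be absorbed into $\|V\|_{L^2}^3+\|\phi_V\|_{L^2}$: they are not mixed products, Young's inequality does not help, and $\epsilon_0,\epsilon_1$ are fixed small constants with no relation to $\|V\|_{L^2}$ (which may be much smaller still). So the stated exponent $3$ on $\|V\|_{L^2}$ is not reached.

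The missing idea is that the decomposition $h=\tfrac{1}{2}W+h'$ must be performed \emph{before} interpolating, at the pointwise level. The paper writes $|h|_2^4\leq|W|_2^4+4\epsilon_1^2|h'|_2^2$ (Young's inequality on the $W$-part, pointwise smallness $|h|_2\leq\epsilon_1$ on the $h'$-part) and integrates, giving $\|h\|_{W^{2,4}}^2\leq\|W\|_{W^{2,4}}^2+2\epsilon_1\|h'\|_{W^{2,2}}$. The crucial distinction is that $W$, unlike $h$, \emph{does} satisfy $\|W\|_{C^2}\leq C\|V\|_{L^2}^2$ (by the elliptic estimate on $LW=-\mathcal{D}^2\varphi(U,U)$ and finite dimensionality of $\mathcal{K}$), so $\|W\|_{W^{2,4}}^2\leq C\|V\|_{L^2}^4$ has the correct degree, while the $h'$-contribution stays of the form $\epsilon_1\|h'\|_{W^{2,2}}$ and is absorbed into the left-hand side — it is never prematurely converted into a power of $\|V\|_{L^2}$. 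The same decomposition with $V=U+h$ is used to handle $\|V\|_{W^{2,6}}^3$. Your later paragraph does recognise the decomposition when estimating $\|h\|_{W^{2,4}}$ and $\|V\|_{W^{2,6}}$ as standalone quantities, but that comes after the $\|h'\|_{W^{2,2}}$ estimate has supposedly been closed, which is too late; the decomposition is needed inside the bootstrap for $\|h'\|_{W^{2,2}}$ itself.

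A minor point: in bounding the bilinear term you write $\||U|_2|h|_2\|_{L^2}\leq\|U\|_{L^\infty}\|h\|_{W^{2,2}}$; since $|U|_2$ includes derivatives up to order two you need $\sup|U|_2=\|U\|_{C^2}$, though this does not affect the conclusion because finite dimensionality of $\mathcal{K}$ gives $\|U\|_{C^2}\leq C\|V\|_{L^2}$ just as well.
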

	\begin{proof}
		By Lemma \ref{lem:ellipticW22} and (\ref{eq:ord2-L2}), we have \begin{equation}\label{eq:ord2-est-L2-1}\|h'\|_{W^{2,2}} \leq C \|Lh'\|_{L^2} \leq C'( \|\phi_V\|_{L^2} +\| |U|_2 |h|_2 \|_{L^2}+\|h\|_{W^{2,4}}^2 + \|V\|_{W^{2,6}}^3).\end{equation} 
		
		By (\ref{eq:U2-est}) we have \begin{equation} \label{eq:ord2-est-L2-2} \| |U|_2 |h|_2 \|_{L^2} \leq C \|V\|_{L^2} \|h\|_{W^{2,2}} \leq C\|V\|_{L^2}(\|V\|_{L^2}^2 + \|\phi_V\|_{L^2}).\end{equation} 
		
		For the remaining terms, we use absorbing tricks similar to before: 
		\begin{equation}
			\label{eq:ord2-est-L2-3}
			|h|_2^4 \leq 2|h|_2^2(\frac{1}{4}|W|_2^2 + |h'|_2^2) \leq |W|_2^4 + 4\epsilon_1^2 |h'|_2^2. 
		\end{equation}
		
		Now by Lemma \ref{lem:schauder} and (\ref{eq:U2-est}), we have 
		\begin{equation}
			\label{eq:W2-est}
			\|W\|_{C^{2,\alpha}} \leq C\|\mathcal{D}^2\mathcal{\varphi}(U,U)\|_{C^{0,\alpha}} \leq C' \|U\|_{C^{2,\alpha}}^2 \leq C'' \|V\|_{L^2}^2. 
		\end{equation}
		Integrating (\ref{eq:ord2-est-L2-3}) then gives
		\begin{equation}
			\label{eq:ord2-est-L2-6}
			\|h\|_{W^{2,4}}^2 \leq C \|V\|_{L^2}^4 + 4\epsilon_1^2  \|h'\|_{W^{2,2}}. 
		\end{equation}
		
		Similarly using Young's inequality and absorbing, we have \begin{equation}
			\label{eq:ord2-est-L2-7}
			|V|_2^6 \leq 8|V|_2^2(|U|_2^4 + |h|_2^4) \leq 2^{9/2} |U|_2^6 + 12\epsilon_0^2 |h|_2^4,
		\end{equation}
		and therefore 
		\begin{equation}
			\label{eq:ord2-est-L2-8}
			\|V\|_{W^{2,6}}^3 \leq C \|V\|_{L^2}^3 + 12 \epsilon_0^2 \|h\|_{W^{2,4}}^2 \leq C'\|V\|_{L^2}^3 + 48\epsilon_0^2\epsilon_1^2 \|h'\|_{W^{2,2}}.
		\end{equation}
		
		Substituting (\ref{eq:ord2-est-L2-2}), (\ref{eq:ord2-est-L2-6}) and (\ref{eq:ord2-est-L2-8}) into (\ref{eq:ord2-est-L2-1}), and absorbing the $\epsilon_0^2\epsilon_1^2 \|h'\|_{W^{2,2}}$ term into the left hand side gives the estimate for $\|h'\|_{W^{2,2}}$ (after keeping only dominant terms). The estimates for $h$ and $V$ then follow after substituting back into (\ref{eq:ord2-est-L2-6}) and (\ref{eq:ord2-est-L2-8}) respectively.
	\end{proof}
	
	\subsection{Third order expansion and distance \L ojasiewicz}
	
	Once more we proceed by Taylor expansion as in Lemma \ref{lem:ord3}, but we only integrate to $L^1$. This yields: 
	\begin{equation}
		\label{eq:ord3-L1}
		\begin{split}
			\|\phi_V - Lh' +& \frac{1}{6}( \mathcal{D}^3\mathcal{\varphi}(U,U,U) + 3 \mathcal{D}^2\mathcal{\varphi}(U,W))) \|_{L^1} \\&\leq C( \||U|_2 |h'|_2 \|_{L^1}  + \|h'\|_{W^{2,2}}^2 + \|W\|_{W^{2,2}}^2 + \||h'|_2|W|_2 \|_{L^1})
			\\&\quad+C( \||U|_2^2 |h|_2\|_{L^1}+\| |U|_2 |h|_2^2 \|_{L^1} + \|h\|_{W^{2,3}}^3 + \|V\|_{W^{2,4}}^4).
		\end{split}
	\end{equation}
	
	Before proving the $L^2$-quantitative rigidity Theorem \ref{thm:quant-rigidity-L2}, we need the following lemma:
	
	\begin{lemma}
		\label{lem:proj-fin-dim}
		Let $V$ be a normal vector field on $\Sigma$ and $\mathcal{K}$ a finite dimensional subspace of $L^2$. Then there exists $C$ such that 
		\begin{equation}
			\|\pi_\mathcal{K}(V)\|_{L^2} \leq C \|V\|_{L^1}.
		\end{equation}
	\end{lemma}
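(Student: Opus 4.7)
The plan is to use the fact that on a finite dimensional subspace, all reasonable norms are equivalent, and in particular one can pass from the $L^2$ pairing to an $L^\infty$--$L^1$ pairing through a bounded basis.

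First, I would choose an $L^2$-orthonormal basis $e_1,\ldots,e_m$ of $\mathcal{K}$. In the context of this paper the relevant subspaces (such as $\mathcal{K}_1$ from Proposition \ref{prop:jacobi-prod-2}) consist of smooth normal vector fields — explicitly, sums of products of coordinate functions — so by compactness of $\Sigma$ each $e_i$ satisfies $\|e_i\|_{L^\infty} \leq C_i <\infty$. Let $M = \left(\sum_{i=1}^m \|e_i\|_{L^\infty}^2\right)^{1/2}$; this is a finite constant depending only on $\mathcal{K}$.

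Next I would expand the projection as
\begin{equation*}
\pi_\mathcal{K}(V) = \sum_{i=1}^m \langle V, e_i\rangle_{L^2}\, e_i,
\end{equation*}
and estimate each coefficient by H\"older's inequality:
\begin{equation*}
|\langle V, e_i\rangle_{L^2}| \leq \int_\Sigma |V|\,|e_i| \leq \|e_i\|_{L^\infty}\, \|V\|_{L^1}.
\end{equation*}
Using Parseval and the definition of $M$,
\begin{equation*}
\|\pi_\mathcal{K}(V)\|_{L^2}^2 = \sum_{i=1}^m |\langle V, e_i\rangle_{L^2}|^2 \leq M^2\, \|V\|_{L^1}^2,
\end{equation*}
so the lemma holds with $C = M$.

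There is no real obstacle here: the only point to verify is that the basis elements of $\mathcal{K}$ lie in $L^\infty$, which in this paper is immediate from the explicit description of Jacobi fields on $\Sigma$ (they are smooth and $\Sigma$ is compact). The lemma will subsequently be applied with $\mathcal{K} = \mathcal{K}_1$ to the expression $-Lh' + \tfrac{1}{6}(\mathcal{D}^3\mathcal{\varphi}(U,U,U) + 3\mathcal{D}^2\mathcal{\varphi}(U,W))$, letting us bound the $L^2$-norm of its projection to $\mathcal{K}_1$ by the $L^1$-norm of the whole expression, which in turn is controlled by products of lower Sobolev norms already estimated in Proposition \ref{prop:ord2-est-L2}. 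This is precisely what is needed to keep the total degree in each integrand below what the prior $C^{2,\alpha}$ and $W^{2,p}$ estimates can absorb.
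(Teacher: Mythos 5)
Your proof is correct and essentially identical to the paper's: both pick an $L^2$-orthonormal basis of $\mathcal{K}$, bound each basis element in sup-norm (the paper uses a $C^{2,\alpha}$ bound, you use $L^\infty$, but these serve the same purpose), and pair against $V$ via Hölder to bound each coefficient by $\|V\|_{L^1}$.
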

	\begin{proof}
		Let $U_i$ be an orthonormal basis of $\mathcal{K}$, then there exists $C$ such that $\|U_i\|_{C^{2,\alpha}} \leq C$ for all $i$. The trivial bound for integration then gives $|\langle V, U_i\rangle_{L^2}| \leq C \|V\|_{L^1}$. 
	\end{proof}
	
	\begin{proof}[Proof of Theorem \ref{thm:quant-rigidity-L2}]
		As before, we have $\pi_{\mathcal{K}_1}(Lh')=0$. By Proposition \ref{prop:thirdvar}, Lemma \ref{lem:proj-fin-dim} and finally (\ref{eq:ord3-L1}) we have that 
		\begin{equation}
		\begin{alignedat}{3}
			&\|U\|_{L^2}^3&&\leq{}&& C \|\pi_{\mathcal{K}_1}(-Lh'+\frac{1}{6}( \mathcal{D}^3\mathcal{\varphi}(U,U,U) + 3 \mathcal{D}^2\mathcal{\varphi}(U,W)))\|_{L^2} \\& \nonumber &&\leq{}&& C' \|-Lh'+\frac{1}{6}( \mathcal{D}^3\mathcal{\varphi}(U,U,U) + 3 \mathcal{D}^2\mathcal{\varphi}(U,W))\|_{L^1}
			\\&&&\leq {}&&C\|\phi_V\|_{L^1} + C( \||U|_2 |h'|_2 \|_{L^1}  + \|h'\|_{W^{2,2}}^2 + \|W\|_{W^{2,2}}^2 + \||h'|_2|W|_2 \|_{L^1}) \\&&&&&+C( \||U|_2^2 |h|_2\|_{L^1}+\| |U|_2 |h|_2^2 \|_{L^1} + \|h\|_{W^{2,3}}^3 + \|V\|_{W^{2,4}}^4).
		\end{alignedat}
		\end{equation}
		
		We now estimate the error terms on the right hand side, using the estimates from first and second order expansion. Recall that $\|U\|_{C^{2,\alpha}} \leq C \|V\|_{L^2}$ and $\|W\|_{C^{2,\alpha}} \leq C\|V\|_{L^2}^2$. 
		
		Then we have:
		
		\begin{alignat*}{3}
		& \||U|_2 |h'|_2 \|_{L^1} &&\leq \|U\|_{W^{2,2}}\|h'\|_{W^{2,2}} &&\leq C\|V\|_{L^2}(\|V\|_{L^2}^3 + \|\phi_V\|_{L^2}),\\
		&\|h'\|_{W^{2,2}}^2 &&\leq C (\|V\|_{L^2}^6 + \|\phi_V\|_{L^2}^2),\\
		&\|W\|_{W^{2,2}}^2 &&\leq C \|V\|_{L^2}^4,\\
		& \||h'|_2 |W|_2\|_{L^1} &&\leq \|W\|_{W^{2,2}} \|h'\|_{W^{2,2}} &&\leq C\|V\|_{L^2}^2 (\|V\|_{L^2}^3 + \|\phi_V\|_{L^2}),\\
		& \||U|_2^2 |h|_2\|_{L^1} &&\leq \|U\|_{W^{2,4}}^2\|h\|_{W^{2,2}}&& \leq C \|V\|_{L^2}^2 (\|V\|_{L^2}^2 + \|\phi_V\|_{L^2}) ,\\
		& \||U|_2 |h|_2^2\|_{L^1}&& \leq  \|U\|_{W^{2,2}} \|h\|_{W^{2,4}}^2 &&\leq C\|V\|_{L^2} (\|V\|_{L^2}^4 + \|\phi_V\|_{L^2}^2) ,\\
		&\|h\|_{W^{2,3}}^3 &&\leq C \|h\|_{W^{2,4}}^3 && \leq C'(\|V\|_{L^2}^6 + \|\phi_V\|_{L^2}^3),\\
		& \|V\|_{W^{2,4}}^4 && \leq C(\|V\|_{L^2}^4 + \|\phi_V\|_{L^2}^4). \end{alignat*}
		
		All the mixed terms are dominated by $\|\phi_V\|_{L^2}$, so taking the dominant power of $V$ and estimating $\|\phi_V\|_{L^1} \leq C\|\phi_V\|_{L^2}$, we deduce that
		\begin{equation}
			\|U\|_{L^2}^3 \leq C( \|V\|_{L^2}^4 + \|\phi_V\|_{L^2}). 
		\end{equation}
		Finally, we have 
		\begin{equation}
			\begin{split}
				\|V\|_{L^2}^3 &\leq C( \|U\|_{L^2}^3+ \|W\|_{L^2}^3 + \|h'\|_{L^2}^3) 
				\\&\leq C( \|V\|_{L^2}^4 + \|\phi_V\|_{L^2} + \|V\|_{L^2}^6 + \|V\|_{L^2}^9 + \|\phi_V\|_{L^2}^3). 
			\end{split}
		\end{equation}
		Taking only the dominant power of $\phi_V$ and absorbing higher powers of $V$ into the left hand side, we conclude the desired inequality
		\begin{equation}
			\|V\|_{L^2}^3 \leq C\|\phi_V\|_{L^2}. 
		\end{equation}
	\end{proof}
	
	We now complete the proof of the distance \L ojasiewicz inequality \ref{thm:quant-rigidity-intro}. 
	
	\begin{proof}[Proof of Theorem \ref{thm:quant-rigidity-intro}, Sobolev estimate]
		As before, let $\epsilon_0$ be small enough that both Theorems \ref{thm:quant-rigidity} and \ref{thm:quant-rigidity-L2} hold. Take $\epsilon'_0$ as in Lemma \ref{lem:change-base} and $\delta=\delta(\epsilon'_0)$ as in Lemma \ref{lem:slice}; then there is a rotation $\mathcal{R}$ such that $\mathcal{R}\cdot \Sigma'$ is the graph of some $V\in \mathcal{K}_0^\perp$ with $\|V\|_{C^1} <\epsilon_1$, and hence $\|V\|_{C^{2,\alpha}} <\epsilon_0$. We conclude from Theorem \ref{thm:quant-rigidity-L2} that \[\|V\|_{L^2} \leq C\|\phi_V\|_{L^2}^\frac{1}{3}.\] \end{proof}

	\subsection{Gradient \L ojasiewicz}
	
	We now prove the gradient \L ojasiewicz Theorem \ref{thm:gradient-L} by expansion of $\mathcal{F}$, similar to \cite[Proposition 6.5]{CM19}.

	\begin{proof}[Proof of Theorem \ref{thm:gradient-L}]
		By Theorem \ref{thm:quant-rigidity-intro}, up to a rotation we may assume $\Sigma'=\Sigma_V$ is the graph of a normal vector field $V\in \mathcal{K}_0^\perp$ with $\|V\|_{C^{2,\alpha}} <\epsilon_0$ and $\|V\|_{C^{2,\alpha}} \leq C \|\phi\|_{C^{0,\alpha}}^\frac{1}{3}$. Here $\phi=\phi_{\Sigma'}=\phi_V$. Consider now the quantity
		\begin{equation}
			\begin{split}
			\label{eq:gradL1}
				\mathcal{E}_3:&=  \int_0^1 ds \left(\langle V, \phi_{sV}\rangle_{L^2(\Sigma_{sV})}  +\langle V, sLV - \frac{s^2}{2} \mathcal{D}^2\mathcal{\varphi}(V,V)\rangle_{L^2}\right)
				\\&=\mathcal{F}(\Sigma_V) - \mathcal{F}(\Sigma) + \frac{1}{2}\langle V,LV\rangle_{L^2} -\frac{1}{6} \langle V, \mathcal{D}^2\mathcal{\varphi}(V,V)\rangle_{L^2}. \end{split}
		\end{equation}
		
		As before, decompose $V= U+h$ where $U\in \mathcal{K}$ and $h\in \mathcal{K}^\perp$. By Proposition \ref{prop:ord2-est-L2} and Theorem \ref{thm:quant-rigidity-L2}, we have $\|U\|_{C^{2,\alpha}} \leq C \|V\|_{W^{2,6}} \leq C' \|\phi_V\|_{L^2}^\frac{1}{3}$ and $\|h\|_{W^{2,4}} \leq C \|\phi_V\|_{L^2}^\frac{2}{3}$. Also by the proof of Proposition \ref{prop:ord1-est-L2} we have $\|Lh\|_{L^2} \leq C\|\phi_V\|_{L^2}^\frac{2}{3}.$
		
		Since $LU=0$, by self-adjointness of $L$ we have $\langle V,LV\rangle_{L^2} = \langle h, Lh\rangle_{L^2}$. It follows that \begin{equation}\label{eq:gradL2} |\langle V,LV\rangle_{L^2}| \leq C \|\phi_V\|_{L^2}^\frac{4}{3}.\end{equation}
		
		Now by Proposition \ref{prop:d2PhiK1}, we have that $\langle U, \mathcal{D}^2\mathcal{\varphi}(U,U)\rangle=0$. Then \[\langle V, \mathcal{D}^2\mathcal{\varphi}(V,V)\rangle_{L^2} = \langle h, \mathcal{D}^2\mathcal{\varphi}(U,U)\rangle_{L^2} + 2\langle V, \mathcal{D}^2\mathcal{\varphi}(U,h)\rangle_{L^2}  + \langle V, \mathcal{D}^2\mathcal{\varphi}(h,h)\rangle_{L^2}.\] 
		It follows that
		\begin{equation}
		\label{eq:gradL3}
			\begin{split}
				|\langle V, \mathcal{D}^2\mathcal{\varphi}(V,V)\rangle_{L^2}|& \leq C( \|h\|_{L^2} \|U\|_{W^{2,4}}^2 + \|V\|_{L^2} \| |U|_2 |h|_2 \|_{L^2} + \|V\|_{L^2} \| h \|_{W^{2,4}}^2)
				\\&\leq  C' \|\phi_V\|_{L^2}^\frac{4}{3}. 
			\end{split}
		\end{equation}
		
		The measures on $\Sigma$ and $\Sigma_{sV}$ are uniformly equivalent up to $C\|V\|_{C^1}$. Therefore, defining \begin{equation}\mathcal{E}'_3 := \int_0^1 ds \langle V, \phi_{sV} +sLV - \frac{s^2}{2} \mathcal{D}^2\mathcal{\varphi}(V,V)\rangle_{L^2(\Sigma_{sV})},\end{equation}
		we have 
		\begin{equation}
		\label{eq:gradL4}
			|\mathcal{E}_3 - \mathcal{E}'_3| \leq C \||V| |V|_1 (|LV| + |V|_2^2) \|_{L^1} \leq C(\|V\|_{W^{1,4}}^2 \|LV\|_{L^2} + \|V\|_{W^{2,4}}^4) \leq C \|\phi\|_{L^2}^\frac{4}{3}. 
		\end{equation}
		
		Finally, arguing as in Lemma \ref{lem:ord2} for $s\in [0,1]$ we have
		\begin{equation} |\phi_{sV} +sLV - \frac{s^2}{2} \mathcal{D}^2\mathcal{\varphi}(V,V)| \leq \frac{Cs^3}{6} \|V\|_{W^{2,6}}^3,\end{equation}
		which implies that 
		\begin{equation}
		\label{eq:gradL5}
			|\mathcal{E}'_3| \leq C'  \|V\|_{C^{2,\alpha}}^4 \leq C'' \|\phi_V\|_{L^2}^\frac{4}{3}. 
		\end{equation}
		
		Combining (\ref{eq:gradL2}), (\ref{eq:gradL3}), (\ref{eq:gradL4}) and (\ref{eq:gradL5}) using (\ref{eq:gradL1}) finally gives the desired inequality
		\begin{equation}
			|\mathcal{F}(\Sigma_V)-\mathcal{F}(\Sigma)| \leq C \|\phi_V\|_{L^2}^\frac{4}{3}.
		\end{equation}
	\end{proof}

	\appendix
	
	\section{Third variation polynomial analysis}
	\label{sec:poly}
	
	In this appendix we describe the rational functions $Q_0, Q_2, Q_4$ appearing in the proof of Proposition \ref{prop:thirdvar}. 
	
	We have \[Q_4 = \frac{144 r_1^2 r_2^2}{(r_1^2+2)(r_1^2+6)(r_2^2+2)(r_2^2+6)(2r_1^2+r_1^2r_2^2+2r_2^2)} P_4,\] where 
	
	\begin{equation}
		\begin{split}
			P_4 = {} &r_1^9 r_2^3 + r_2^3 r_1^9 + \frac{4}{3} (r_1^8 r_2^4 + r_1^4 r_2^8) -\frac{1}{3}(r_1^7 r_2^5 + r_1^5 r_2^7) + \frac{4}{3} r_1^6 r_2^6 \\& + 6(r_1^9 r_2 + r_1 r_2^9) + 10 (r_1^8 r_2^2 + r_1^2 r_2^8) + \frac{2}{3}( r_1^7 r_2^3 + r_1^3 r_2^7) + \frac{28}{3} (r_1^6r_2^4 + r_1^4 r_2^6) + 12 r_1^5r_2^5 \\&+4(r_1^8 + r_2^8) + \frac{32}{3}(r_1^7 r_2 + r_1 r_2^7) - 4(r_1^6 r_2^2 + r_1^2 r_2^6) + \frac{112}{3}(r_1^5 r_2^3 + r_1^3 r_2^5) - 32 r_1^4 r_2^4 \\& + 16(r_1^5 r_2+r_1r_2^5) - 32 (r_1^4 r_2^2 + r_1^2 r_2^4) + 32 r_1^3 r_2^3.
		\end{split}
	\end{equation} 
	
	\begin{proof}[Proof of Claim 1] To show that $Q_4 \geq 0$ for $r_1,r_2\geq 0$ we need only absorb the negative terms in $P_4$. For this we use the elementary inequalities: $\frac{1}{3} r_1^7 r_2^5 \leq \frac{1}{6} r_1^6 r_2^6 +  \frac{1}{6}r_1^8 r_2^4$; $4r_1^6 r_2^2 \leq 2 r_1^4 r_2^4 + 2r_1^8$; $32 r_1^4 r_2^2 \leq 16 r_1^3 r_2^3 + 16 r_1^5 r_2$; and the same inequalities with $r_1,r_2$ swapped. 
	\end{proof}
	
	We have \[Q_0 = \frac{48 r_1^2 r_2^2}{(r_1^2+2)^2(r_2^2+2)(r_2^2+6)(2r_1^2 + r_1^2 r_2^2 + 2r_2^2)} P_0,\] where 
	\begin{equation}
		\begin{split}
			P_0 = {} &3r_1^8 r_2^2 + 11 r_1^2 r_2^8 - r_1^7 r_2^3 -9r_1^3 r_2^7 + 2r_1^6 r_2^4+2r_1^4 r_2^6 + 18r_1^5 r_2^5 \\&+ 6r_1 r_2^9 + 6r_1^8 -14 r_1^7 r_2 -6 r_1r_2^7 -6 r_1^6 r_2^2 -22 r_1^2 r_2^6 + 34 r_1^5 r_2^3 +42 r_1^3 r_2^5 - 40 r_1^4 r_2^4. 
		\end{split}
	\end{equation} 
	
	\begin{proof}[Proof of Claim 2]
		We show that $P_0\geq 1024$ for $r_1,r_2\geq \sqrt{2}$, using the change of variable $r_i = s_i + \sqrt{2}$. With this substitution the polynomial becomes
		\begin{equation}
			\begin{split}
				P_0 = & {} 1024 + 3s_1^8 s_2^2 - s_1^7 s_2^3 + 2s_1^6 s_2^4 + 18 s_1^5 s_2^5 + 2s_1^4 s_2^6 -9s_1^3 s_1^7 + 11 s_1^2 s_2^8 + 6s_1 s_2^9 \\&+ 6\sqrt{2} s_1^8 s_2 + 21\sqrt{2} s_1^7 s_2^2 + \sqrt{2} s_1^6 s_2^3 + 102\sqrt{2} s_2^5 s_2^4 + 102\sqrt{2} s_1^4 s_2^5 \\&- 55 \sqrt{2} s_2^3 s_2^6 + 61\sqrt{2} s_1^2 s_2^7 + 76\sqrt{2} s_1 s_2^8 + 6\sqrt{2} s_2^9 \\& + 12 s_1^8  + 76 s_1^7 s_2 + 144 s_1^6 s_2^2 + 448 s_1^5 s_2^3 + 908 s_1^4 s_2^4 + 120 s_1^3 s_2^5 + 240 s_1^2 s_2^6 + 724 s_1 s_2^7 + 136 s_2^8 \\&+ 80\sqrt{2} s_1^7 + 200 \sqrt{2} s_1^6 s_2 + 780\sqrt{2} s_1^5 s_2^2 + 2060\sqrt{2} s_1^4 s_2^3 + 1540 \sqrt{2} s_1^3 s_2^4 + 596\sqrt{2}s_1^2 s_2^5 \\& + 1792\sqrt{2} s_1s_2^6 + 632\sqrt{2} s_2^7 + 444 s_1^6 +1116s_1^5s_2 + 5220 s_1^4 s_2^2 + 7320 s_1^3s_2^3 \\& + 3860 s_1^2 s_2^4 + 5708s_1s_2^5 + 3212 s_2^6 + 788\sqrt{2} s_1^5 + 2948\sqrt{2} s_1^4 s_2 + 8856 \sqrt{2} s_1^3 s_2^2 \\& + 7640\sqrt{2} s_1^2s_2^3 + 6932\sqrt{2} s_1 s_2^4 + 5092\sqrt{2} s_2^5 + 2256 s_1^4 + 9648 s_1^3 s_2 + 17056 s_1^2 s_2^2 \\& + 13936 s_1 s_2^3 + 10864 s_2^4 + 2608\sqrt{2}s_1^3 + 9104 \sqrt{2} s_1^2 s_2 + 10832 \sqrt{2} s_1 s_2^2 + 8176\sqrt{2} s_2^3 \\& + 4208 s_1^2 + 10016 s_1 s_2 + 8816 s_2^2 + 2048\sqrt{2} s_1 + 3072\sqrt{2} s_2. 
			\end{split}
		\end{equation} 
		
		Now we can again absorb the negative terms, using the elementary inequalities: 
		\[ s_1^7 s_2^3 \leq \frac{1}{2} s_1^8 s_2^2 + \frac{1}{2} s_1^6 s_2^4 ;\]
		\[ 9s_1^3 s_2^7  \leq 2s_1^4 s_2^6 + \frac{9}{4} s_1^2 s_2^8 ;\]
		\[ 55\sqrt{2} s_1^3 s_2^6 \leq  \frac{55\sqrt{2}}{2}( s_1^4 s_2^5 + s_1^2 s_2^7).\]
		
	\end{proof}
	
	For completeness, we record that \[Q_2 = \frac{144 r_1^2 r_2^2}{(r_1^2+2)(r_1^2+6)(r_2^2+2)(r_2^2+6)(2r_1^2+r_1^2r_2^2+2r_2^2)} P_2,\] where
	\begin{equation}
		\begin{split}
			P_2 = {} & r_1^8 r_2^2 +  r_1^2 r_2^8 -\frac{7}{3} (r_1^7 r_2^3 + r_1^3 r_2^7) -2(r_1^6 r_2^4 + r_1^4 r_2^6) +\frac{20}{3} r_1^5 r_2^5 \\& +2(r_1^8 + r_2^8) - \frac{50}{3} (r_1^7 r_2 + r_1r_2^7) - 18(r_1^6 r_2^2 + r_1^2 r_2^6) + \frac{38}{3} (r_1^5 r_2^3 + r_1^3 r_2^5) -24 r_1^4 r_2^4 \\& -16(r_1^5 r_2 + r_1 r_2^5) + 32 (r_1^4 r_2^2 + r_1^2 r_2^4) -32 r_1^3 r_2^3. 
		\end{split}
	\end{equation} 
	
	\bigskip

	\bibliographystyle{amsalpha}
	\bibliography{rigidity}

\providecommand{\bysame}{\leavevmode\hbox to3em{\hrulefill}\thinspace}
\providecommand{\MR}{\relax\ifhmode\unskip\space\fi MR }
\providecommand{\MRhref}[2]{%
  \href{http://www.ams.org/mathscinet-getitem?mr=#1}{#2}
}
\providecommand{\href}[2]{#2}
\begin{thebibliography}{CdCK70}

\bibitem[ALW14]{ALW14}
Ben Andrews, Haizhong Li, and Yong Wei, \emph{{$\mathscr F$}-stability for
  self-shrinking solutions to mean curvature flow}, Asian J. Math. \textbf{18}
  (2014), no.~5, 757--777. \MR{3287002}

\bibitem[AS13]{AS13}
Claudio Arezzo and Jun Sun, \emph{Self-shrinkers for the mean curvature flow in
  arbitrary codimension}, Math. Z. \textbf{274} (2013), no.~3-4, 993--1027.
  \MR{3078255}

\bibitem[Bre13]{B13}
Simon Brendle, \emph{Embedded minimal tori in {$S^3$} and the {L}awson
  conjecture}, Acta Math. \textbf{211} (2013), no.~2, 177--190. \MR{3143888}

\bibitem[CdCK70]{CCK}
S.~S. Chern, M.~do~Carmo, and S.~Kobayashi, \emph{Minimal submanifolds of a
  sphere with second fundamental form of constant length}, Functional
  {A}nalysis and {R}elated {F}ields ({P}roc. {C}onf. for {M}. {S}tone, {U}niv.
  {C}hicago, {C}hicago, {I}ll., 1968), Springer, New York, 1970, pp.~59--75.
  \MR{0273546}

\bibitem[CIM15]{CIM15}
Tobias~Holck Colding, Tom Ilmanen, and William~P. Minicozzi, II, \emph{Rigidity
  of generic singularities of mean curvature flow}, Publ. Math. Inst. Hautes
  \'{E}tudes Sci. \textbf{121} (2015), 363--382. \MR{3349836}

\bibitem[CL13]{CL13}
Huai-Dong Cao and Haizhong Li, \emph{A gap theorem for self-shrinkers of the
  mean curvature flow in arbitrary codimension}, Calc. Var. Partial
  Differential Equations \textbf{46} (2013), no.~3-4, 879--889. \MR{3018176}

\bibitem[CM15]{CM15}
Tobias~Holck Colding and William~P. Minicozzi, II, \emph{Uniqueness of blowups
  and {{\L ojasiewicz}} inequalities}, Ann. of Math. (2) \textbf{182} (2015),
  no.~1, 221--285. \MR{3374960}

\bibitem[CM18]{CM18}
Tobias~Holck Colding and William~P Minicozzi, II, \emph{Wandering
  singularities}, arXiv preprint arXiv:1809.03585 (2018).

\bibitem[CM19]{CM19}
Tobias~Holck Colding and William~P. Minicozzi, II, \emph{Regularity of elliptic
  and parabolic systems}, arXiv preprint arXiv:1905.00085 (2019).

\bibitem[CS19]{CS19}
Otis Chodosh and Felix Schulze, \emph{Uniqueness of asymptotically conical
  tangent flows}, arXiv preprint arXiv:1901.06369 (2019).

\bibitem[Din18]{D18}
Qi~Ding, \emph{A rigidity theorem on the second fundamental form for
  self-shrinkers}, Trans. Amer. Math. Soc. \textbf{370} (2018), no.~12,
  8311--8329. \MR{3864377}

\bibitem[DX14]{DX14}
Qi~Ding and Y.~L. Xin, \emph{The rigidity theorems of self-shrinkers}, Trans.
  Amer. Math. Soc. \textbf{366} (2014), no.~10, 5067--5085. \MR{3240917}

\bibitem[ELS20]{ELS18}
Christopher~G Evans, Jason~D Lotay, and Felix Schulze, \emph{Remarks on the
  self-shrinking clifford torus}, Journal f{\"u}r die reine und angewandte
  Mathematik \textbf{2020} (2020), no.~765, 139--170.

\bibitem[GZ18]{GZ18}
Qiang Guang and Jonathan~J. Zhu, \emph{On the rigidity of mean convex
  self-shrinkers}, Int. Math. Res. Not. IMRN (2018), no.~20, 6406--6425.
  \MR{3872327}

\bibitem[Law69]{L69}
H.~Blaine Lawson, Jr., \emph{Local rigidity theorems for minimal
  hypersurfaces}, Ann. of Math. (2) \textbf{89} (1969), 187--197. \MR{238229}

\bibitem[Li02]{L02}
Haizhong Li, \emph{Willmore submanifolds in a sphere}, Math. Res. Lett.
  \textbf{9} (2002), no.~5-6, 771--790. \MR{1906077}

\bibitem[LL15]{LL15}
Yng-Ing Lee and Yang-Kai Lue, \emph{The stability of self-shrinkers of mean
  curvature flow in higher co-dimension}, Trans. Amer. Math. Soc. \textbf{367}
  (2015), no.~4, 2411--2435. \MR{3301868}

\bibitem[Loj65]{Loj}
Stanislaw Lojasiewicz, \emph{Ensembles semi-analytiques}, Lectures Notes IHES
  (Bures-sur-Yvette) (1965).

\bibitem[Sch14]{S14}
Felix Schulze, \emph{Uniqueness of compact tangent flows in mean curvature
  flow}, J. Reine Angew. Math. \textbf{690} (2014), 163--172. \MR{3200339}

\bibitem[Sim68]{S68}
James Simons, \emph{Minimal varieties in riemannian manifolds}, Ann. of Math.
  (2) \textbf{88} (1968), 62--105. \MR{233295}

\bibitem[Sim83]{S83}
Leon Simon, \emph{Asymptotics for a class of nonlinear evolution equations,
  with applications to geometric problems}, Ann. of Math. (2) \textbf{118}
  (1983), no.~3, 525--571. \MR{727703}

\bibitem[Smo05]{S05}
Knut Smoczyk, \emph{Self-shrinkers of the mean curvature flow in arbitrary
  codimension}, Int. Math. Res. Not. (2005), no.~48, 2983--3004. \MR{2189784}

\bibitem[XX17]{XX17}
Hongwei Xu and Zhiyuan Xu, \emph{On {C}hern's conjecture for minimal
  hypersurfaces and rigidity of self-shrinkers}, J. Funct. Anal. \textbf{273}
  (2017), no.~11, 3406--3425. \MR{3706607}

\bibitem[Zhu20]{Z20}
Jonathan~J. Zhu, \emph{{{\L ojasiewicz}} inequalities for cylindrical
  self-shrinkers by perturbation}, In preparation (2020).

\end{thebibliography}

\end{document}